\documentclass[12pt]{article}
\usepackage[margin=.88in]{geometry}

\usepackage{multirow}
\usepackage{amsmath, amssymb,amsthm} 
\usepackage{bm}
\usepackage[pdftex]{graphicx}
\usepackage{algorithm, algorithmic}
\usepackage{comment}
\usepackage{caption}
\usepackage{subcaption}
\usepackage{enumitem}
\usepackage{natbib}
\usepackage{diagbox}

\usepackage{hyperref}
\hypersetup{
	colorlinks=true,
	linkcolor=blue,
	filecolor=blue,      
	urlcolor=red,
	citecolor=blue,
}

\DeclareMathAlphabet\mathbfcal{OMS}{cmsy}{b}{n}

\newcommand{\Var}{\text{Var}}
\newcommand{\Cov}{\text{Cov}}

\newcommand{\argmin}{\operatornamewithlimits{argmin}}
\newcommand{\tr}{\text{tr}}

\renewcommand{\(}{\left(}
\renewcommand{\)}{\right)}

\newcommand{\be}{\begin{equation}}
	\newcommand{\ee}{\end{equation}}
 \newcommand{\bs}{\begin{split}}
	\newcommand{\es}{\end{split}}
\newcommand{\bea}{\begin{eqnarray}}
	\newcommand{\eea}{\end{eqnarray}}
\newcommand{\beas}{\begin{eqnarray*}}
	\newcommand{\eeas}{\end{eqnarray*}}

\usepackage{color}

\newcommand{\T}{^\top}
\newcommand{\E}{\mathbb{E}}
\newcommand{\PP}{\mathbb{P}}
\newcommand{\R}{\mathbb{R}}
\newcommand{\bA}{\mathbf{A}}

\newcommand{\bM}{\mathbf{M}}
\newcommand{\bI}{\mathbf{I}}
\newcommand{\bU}{\mathbf{U}}
\newcommand{\bS}{\mathbf{S}}
\newcommand{\bD}{\mathbf{D}}
\newcommand{\bSigma}{\mathbf{\Sigma}}
\newcommand{\bcSigma}{\mathbf{\Sigma}}

\newcommand{\bcA}{{\mathbfcal{A}}}
\newcommand{\bcM}{{\mathbfcal{M}}}

\newcommand{\bcT}{{\mathbfcal{T}}}

\newcommand{\bcK}{{\mathbfcal{K}}}

\newtheorem{theorem}{Theorem}
\newtheorem{conjecture}{Conjecture}
\newtheorem{lemma}{Lemma}

\newtheorem{remark}{Remark}

\usepackage{setspace}
\title{Detection Is Harder Than Estimation in Certain Regimes: Inference for Moment and Cumulant Tensors}

\author{Runshi Tang\footnote{Department of Statistics, University of Wisconsin-Madison}, ~ 
Yuefeng Han\footnote{Department of Applied and Computational Mathematics and Statistics, University of Notre Dame}, ~ and ~ 
Anru R. Zhang\footnote{Department of Biostatistics \& Bioinformatics and Department of Computer Science, Duke University}}

\date{}

\begin{document}

\maketitle

\begin{abstract}
We study estimation and detection of high-order moment and cumulant tensors from $n$ i.i.d.\ observations of a $p$-dimensional random vector, with performance measured in tensor spectral norm. Under sub-Gaussianity, we show that the minimax rate for estimating the order-$d$ moment and cumulant tensors is $\sqrt{p/n}\wedge 1$. In contrast to covariance estimation, the sample moment tensor is generally not rate-optimal for $d\ge 3$, and we construct an estimator that attains the minimax rate up to logarithmic factors. On the computational side, we study testing whether the $d$-th order cumulant tensor vanishes after whitening. Using the low-degree polynomial framework, we provide evidence that detection is computationally hard when $n\ll p^{d/2}$. At the same time, we identify a regime in which an efficiently computable estimator achieves error smaller than the separation at which low-degree tests can reliably distinguish the null from the alternative. This reveals an unusual reverse detection--estimation gap: computationally efficient detection can be harder than computationally efficient estimation. The underlying reason is that the relevant loss, tensor spectral norm, is itself NP-hard to compute, creating a new form of computational--statistical gap.

\end{abstract}

\section{Introduction}\label{sec:intro}

High-order moments and cumulants play a central role in modern statistics and learning theory. Beyond the covariance matrix, which captures only second-order dependence, higher-order moment and cumulant tensors encode rich non-Gaussian structure that underlies fundamental tasks such as independent component analysis (ICA) \cite{comon1994independent}, latent variable estimation \citep{anandkumar2014tensor}, method-of-moments learning \citep{pearson1894contributions}, Gaussian mixture models \citep{WuYang2020GaussianMixturesDMM}, and multi-view representation learning \citep{anandkumar2012method}. In many of these problems, high-order tensors serve to identify hidden factors or to distinguish structured alternatives from Gaussian noise. Motivated by these developments, this paper initiates a systematic study of the statistical and computational limits of estimation and detection of high-order moment and cumulant tensors, without imposing additional structural assumptions such as sparsity or low rank.

Suppose we observe i.i.d.\ samples $X_1,\ldots,X_n \in \mathbb{R}^p$ from an unknown distribution of a random vector $X$. We study the estimation and detection of the \emph{high-order moments} and \emph{cumulants} of $X$: for $d \geq 3$,
\[
\bcM_d = \mathbb{E}[X^{\otimes d}],
\qquad
\bcK_d = \nabla^d \log\!\left(\mathbb{E}\, e^{t^\top X}\right)\big|_{t=0}.
\]
While $\bcM_d$ is the most direct extension of covariance to higher order, cumulants provide a more refined characterization of non-Gaussianity: in particular, all cumulants of order $d \geq 3$ vanish for Gaussian distributions, and cumulants are additive under independent summation. These properties make $\bcK_d$ a canonical object both in statistical theory and in computational pipelines that first whiten second-order structure and then extract higher-order deviations from Gaussianity.

A key feature of our setting is that we measure estimation error in the tensor spectral norm (operator norm),
\[
\|\bcT\|
:=
\sup_{u_k: \|u_k\|=1, k\in[d]}
\Big\langle \bcT,\ \bigotimes_{k\in[d]} u_k\Big\rangle,
\]
the natural analogue of matrix operator norm. This loss is the benchmark in covariance estimation because it uniformly controls directional quadratic forms and directly governs eigenvalue, eigenspace, and PCA perturbation. In the tensor setting, spectral norm likewise controls worst-case multilinear functionals, underlies perturbation guarantees for spectral and tensor power methods, and connects naturally to testing problems that distinguish Gaussian-like distributions from structured alternatives. By contrast, entrywise or Frobenius losses primarily average error over coordinates; for unstructured moment or cumulant tensors, they reduce essentially to high-dimensional mean estimation of the vectorized tensor and therefore miss the genuinely tensor-specific geometry captured by spectral norm.

When $d=2$, covariance estimation in spectral norm is well understood: the sample covariance achieves the optimal rate $\sqrt{p/n}$ under sub-Gaussian assumptions, while for heavy-tailed distributions, suitably truncated versions attain the same rate \citep{ keUserFriendlyCovarianceEstimation2019,koltchinskii2017concentration}. For higher-order tensors, however, two striking differences arise.

\begin{enumerate}
    \item Although the statistical minimax lower bound for estimating higher-order moment and cumulant tensors remains of order $\sqrt{p/n}\wedge 1$, the naive sample moment and sample cumulant estimators are no longer rate-optimal, even under sub-Gaussianity; see Theorems \ref{thm_lower_bound_cumulant_general_d} and \ref{thm_upper_bound_sample_cumulants}. Thus, unlike the covariance case, statistically optimal estimation requires more delicate procedures.

    \item For $d\ge 3$, computing, certifying, or even approximating the tensor spectral norm is NP-hard in general \citep{hillarMostTensorProblems2013a}. This computational obstruction fundamentally changes the relationship between estimation and testing, and makes it impossible to directly transfer many matrix-based intuitions to the tensor setting.
\end{enumerate}

These two facts motivate a basic question: \emph{what is the computationally optimal approach to estimating high-order moments and cumulants in tensor spectral norm?} To investigate the computational barriers to estimation, we study an associated detection problem. Our results reveal a rich computational--statistical structure that differs qualitatively from both covariance estimation and more standard planted-signal problems. In particular, this phenomenon is distinct from the more familiar detection--recovery gap in the literature, where recovery is typically the harder task \cite{schramm2022computational,luoComputationalLowerBounds2024,mao2023detection,bresler2023detection}; see Sections~\ref{sec:related-work} and \ref{sec_rev_gap} for further discussion.

\paragraph{Overview of contributions.}

We summarize the main contributions of the paper in the following informal theorem, followed by a more detailed discussion below.
\begin{theorem}[Informal]
For fixed order $d\ge 3$, the problem of estimating and detecting high-order moment and cumulant tensors under tensor spectral norm exhibits the following behavior:
\begin{enumerate}
    \item \textbf{Statistical limit.}
    Over broad classes of sub-Gaussian distributions, the minimax estimation error in tensor spectral norm satisfies
    \[
        \inf_{\widehat \bcT}\sup_{\bcT}\E\|\widehat \bcT-\bcT\|
        \asymp \sqrt{p/n}\wedge 1.
    \]

    \item \textbf{Computational limit for detection.}
    For the associated testing problem, the low-degree threshold occurs at $n \asymp p^{d/2}$.

    \item \textbf{Reverse detection--estimation gap.}
    There is a regime where computationally efficient detection is harder than computationally efficient estimation.
\end{enumerate}
\end{theorem}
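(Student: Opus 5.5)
The informal theorem has three parts, and I would prove them separately, treating each as a summary of a formal result proved later.

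\emph{Statistical limit.} For the lower bound I would use Fano's method over a packing of alternatives. I would take distributions $X = Z + \delta\,\xi\, v$, or equivalently small non-Gaussian perturbations of $N(0,I_p)$, indexed by a $1/2$-packing $\{v_j\}$ of the sphere with $\log$-cardinality $\asymp p$. The perturbation is chosen so that the order-$d$ cumulant (or moment) differs by $\asymp \delta\, v_j^{\otimes d}$. Distinct packing elements then satisfy $\|\delta(v_j^{\otimes d}-v_k^{\otimes d})\|\gtrsim \delta$, while the KL divergence between the $n$-fold products is $\lesssim n\delta^2$. Choosing $\delta\asymp\sqrt{p/n}\wedge 1$ gives the bound $\sqrt{p/n}\wedge1$.

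For the upper bound, the sample moment fails because $\|\frac1n\sum X_i^{\otimes d}-\bcM_d\|$ picks up a term of order $p^{d/2}/n$ from the heavy tails of $\langle X,u\rangle^d$. I would therefore construct a truncated or median-of-means type estimator. One option is to minimize over $\bcT$ the quantity
\[
\sup_{u\in\mathcal N}\Big|\mathrm{MoM}\big(\langle X_i,u\rangle^d\big)-\langle \bcT,u^{\otimes d}\rangle\Big|,
\]
where $\mathcal N$ is an $\epsilon$-net. The one-dimensional MoM concentrates at rate $\sqrt{\log(1/\eta)/n}$, and a union bound over a net of size $e^{Cp}$ gives the rate $\sqrt{p/n}$ up to logarithms. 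Passing from a net over symmetric rank-one directions to the spectral norm uses the standard polarization and net comparison for symmetric tensors. The cumulant estimator then follows because $\bcK_d$ is a polynomial in $\bcM_1,\dots,\bcM_d$. Each moment is estimated at rate $\sqrt{p/n}$, and the norms of the lower-order moments are bounded under sub-Gaussianity, so the error propagates only through products of these bounded quantities.

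\emph{Computational limit.} I would set up the test $H_0: X\sim N(0,I_p)$ against a planted alternative, for instance $X=(I-vv^\top)Z+ s\,v$ with non-Gaussian $s$ matching the Gaussian moments up to order $d-1$ and $v$ uniform on the sphere. The next step is to bound the low-degree norm of the likelihood ratio in the Hermite basis. Since the first $d-1$ moments match, only Hermite coefficients of degree at least $d$ per sample survive. The second moment $\E_{v,v'}\big(1+c\langle v,v'\rangle^d+\dots\big)^n$ restricted to degree at most $D$ then stays $O(1)$ exactly when $n\ll p^{d/2}$, using $\langle v,v'\rangle\sim p^{-1/2}$. For the matching upper bound, I would use the degree-$2d$ statistic
\[
\sum_{i\ne j}\langle h_d(X_i),h_d(X_j)\rangle,
\]
where $h_d$ is the $d$-th Hermite tensor. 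Its mean under the alternative is $\asymp n^2\|\bcK_d\|_F^2$ and its variance under the null is $\asymp n^2p^d$, which gives detection for $n\gg p^{d/2}$.

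\emph{Reverse gap.} Combining the two parts, the polynomial-time estimator (or the sample cumulant in an appropriate regime) has spectral error $\sqrt{p/n}$, possibly with a polynomial-time proxy, while low-degree tests need separation $\|\bcK_d\|\gtrsim$ some $\tau(n,p)$ that exceeds this error for $p\ll n\ll p^{d/2}$. The separation $\|\bcK_d\|\asymp 1$ under the planted model illustrates this. The main obstacle is this step: I must exhibit a computationally efficient estimator with spectral error below $1$ even though certifying the spectral norm is hard. I would try a sum-of-squares or flattening-based estimator whose output provably satisfies $\|\widehat\bcK-\bcK\|$ small without the algorithm ever computing the spectral norm. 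The gap then arises precisely because a test would need to certify $\|\widehat\bcK\|$, which is an NP-hard task.
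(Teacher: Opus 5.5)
Your first two items broadly follow the paper's route.

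\begin{itemize}
\item For the statistical limit you use Fano over a sphere packing, with perturbations of $N(0,I_p)$ whose $d$-th cumulant moves by $\asymp\delta v^{\otimes d}$ at KL cost $O(\delta^2)$.
\item For the upper bound you estimate $\langle\bcM_d,u^{\otimes d}\rangle$ robustly in one dimension on an $\varepsilon$-net, solve a convex program with one constraint per net point, and pass to the spectral norm via the symmetric net argument. Your median-of-means with $\asymp p$ blocks even removes the $R^d$ logarithmic factor of the paper's truncation, although both programs have $e^{Cp}$ constraints.
\item For detection you plant a non-Gaussian direction and use a Hermite-tensor U-statistic, which is the paper's $V_n^{(\mathrm{cumu})}$ with $\Psi_d=h_d$. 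Your spike is spherical with exactly identity covariance, whereas the paper whitens a Rademacher spike.
\end{itemize}

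One correction to the lower bound: the literal additive model $Z+\delta\xi v$ with $Z\sim N(0,I_p)$ does not have the property you then assert. It moves $\bcK_d$ only by $\delta^d\kappa_d(\xi)v^{\otimes d}$ but moves the covariance by $\delta^2\Var(\xi)vv^\top$. The KL between packing points is therefore of order $\delta^4$, and Fano yields only $(p/n)^{d/4}$. You need $(I-vv^\top)X$ to stay $N(0,I-vv^\top)$ and $v^\top X$ to be an $O(\theta)$ perturbation of $N(0,1)$. The paper obtains this from the tilt $dP_{v,\theta}/dP_0\propto\exp(\theta h_M(v^\top x))$, with $h_M$ a truncated $H_d$. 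This gives $\bcK_d=(C_{M,d}\theta+O(\theta^2))v^{\otimes d}$ at KL cost $O(\theta^2)$.

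The genuine gap is in the third item. You place the reverse gap in $p\ll n\ll p^{d/2}$ at constant separation. That forces you to produce a polynomial-time estimator with $o(1)$ spectral error below $p^{d/2}$ samples, and nothing available does this.
\begin{itemize}
\item Your Part~1 estimator is not polynomial-time.
\item The sample cumulant has error of order $p^{d/2}/n\gg1$ there, and attains $\sqrt{p/n}$ only once $n\gtrsim p^{d-1}$.
\item Spectral bounds obtained through matrix flattenings become $o(1)$ only for $n\gtrsim p^{\lceil d/2\rceil}$, up to logarithms.
\item The paper in fact conjectures (Conjecture~\ref{conj_consistent}) that no efficient consistent estimator exists for $n\ll p^{d/2}$.
\end{itemize}
So the step you flag as the main obstacle is not something an SoS or flattening estimator is expected to close.

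The missing idea is to work above the threshold and let the planted signal vanish. In the paper, taking $a=p^{1/2}/(n^{1/d}(\log n)^8)$ in Lemma~\ref{lemma_ldp} keeps $\|L_{\le D}\|=O(1)$ for $D\asymp(\log n)^2$. The whitened planted cumulant still satisfies $\|\bcK_d(SX)\|\asymp a^{d/2}\gtrsim\sqrt{p^{d/2}/(n(\log n)^{8d})}=D_{\mathrm{det}}$. Now take $n=p^{d/2+\varepsilon}$ with $\varepsilon\in(0,d/2-1)$. The ordinary sample cumulant is efficient with error $D_{\mathrm{est}}=p^{d/2}/n$ by Theorem~\ref{thm_upper_bound_sample_cumulants} with $\beta=2$. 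Then $D_{\mathrm{est}}/D_{\mathrm{det}}\lesssim\sqrt{p^{d/2}/n}\,(\log n)^{4d}\to0$.

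Your own overlap computation gives the same conclusion once you track the strength $\lambda=\kappa_d(s)$. Each sample contributes $\lambda^2\langle v,v'\rangle^d$ at leading order. Up to polylogarithmic factors in $D$, the truncated norm therefore stays bounded whenever $n\lambda^2\ll p^{d/2}$, that is, at separation $\lambda\approx\sqrt{p^{d/2}/n}$. No new estimator is needed: the reverse gap comes from comparing this weak-signal low-degree bound with the existing sample-cumulant rate.
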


\smallskip
\noindent\textbf{(i) Statistical limits.}
We establish minimax lower bounds for estimating both $\bcM_d$ and $\bcK_d$ in tensor spectral norm over broad classes of sub-Gaussian distributions. Specifically, the optimal statistical rate is
\[
\sqrt{p/n}\wedge 1,
\]
up to constants depending only on $d$; see Theorem \ref{thm_lower_bound_cumulant_general_d}. We then show that the sample moment and sample cumulant tensors generally fail to achieve this rate for $d\ge 3$: under $\beta$-subexponential tails, their operator-norm error scales as
\[
\max\Big\{\frac{p^{d/\beta}}{n},\ \sqrt{\frac{p}{n}}\Big\},
\]
which is sharp in its dependence on $(n,p)$ even in the sub-Gaussian case $\beta=2$; see Section \ref{sec_sample_moments}. Moreover, we construct a statistically optimal (up to a logarithmic factor) estimator via a convex feasibility formulation over an $\varepsilon$-net of the sphere, showing that the minimax benchmark $\sqrt{p/n}$ is statistically attainable; see Section \ref{sec_optimal_estimator}. Finally, we show that detection and consistent estimation have the same statistical limit. This establishes the statistical landscape of the problem.

\smallskip
\noindent\textbf{(ii) A ``reverse'' detection--estimation gap.}
We next turn to the computational aspect of the problem. To understand when efficient procedures may or may not exist, we study an associated detection problem: given i.i.d.\ samples, test whether the $d$-th order cumulant tensor vanishes after whitening, versus whether it is bounded away from zero in tensor spectral norm. Using the low-degree polynomial framework, we show that detection is computationally hard in the regime
\[
n \ll p^{d/2},
\]
for a broad class of polynomial-time methods; see Section \ref{sec_hypothesis_test}.

Moreover, our construction yields a family of testing problems for which low-degree methods fail even when the alternative is separated from the null by at least
\[
\sqrt{\frac{p^{d/2}}{n\operatorname{polylog}(n)}}.
\]
On the other hand, the sample cumulant tensor is computationally efficient to compute, and under sub-Gaussian assumptions, its operator-norm error is of order
\[
\frac{p^{d/2}}{n},
\qquad\text{when } p^{d/2}\ll n \ll p^{d-1}.
\]
Comparing these two scales reveals an unusual phenomenon: the error of an efficiently computable estimator is asymptotically smaller than the separation at which low-degree tests can reliably distinguish the null from the alternative. In this sense, computationally efficient detection can be strictly harder than computationally efficient estimation. This is in contrast to the more familiar detection--recovery gap in planted problems, where recovery is typically the harder task. 

The underlying reason is that an efficiently computable estimator does not automatically yield an efficiently computable test in our setting. Indeed, turning an estimator into a test requires certifying the size of the estimated tensor in spectral norm, and computing or approximating that norm is NP-hard in general. Thus the usual reduction from estimation to testing does not preserve computational efficiency here.

\smallskip
\noindent\textbf{(iii) Open problems.}
Our results leave several basic questions open. Most notably, we conjecture that the low-degree threshold $n \asymp p^{d/2}$ marks the onset of efficient consistent estimation, and that in the intermediate regime $p^{d/2} \ll n \ll p^{d-1}$, the computationally optimal estimation error is strictly larger than the statistical minimax rate. If true, this would yield a nearly complete computational--statistical phase diagram for moment and cumulant tensor estimation in spectral norm.

What makes these questions unusual is that the main difficulty does not arise solely from the size of the parameter space or from the complexity of the underlying distribution class. Rather, it is built into the loss function itself. In many open problems on computationally constrained estimation, the target loss is easy to evaluate once an estimator is given, and the main challenge is to determine whether one can compute an estimator attaining the statistically optimal rate. Here, by contrast, the natural loss---the tensor spectral norm---is itself NP-hard to compute or even approximate in general. As a result, the standard reduction from estimation to testing is no longer algorithmically transparent: even if one can efficiently construct an estimator, it may still be computationally intractable to certify that the estimator is accurate in the relevant norm.

At a high level, our results reveal a new computational--statistical tension in high-order moment and cumulant tensor estimation under the stringent spectral-norm loss. They suggest the phase diagram in Figure \ref{fig_1}.

\begin{enumerate}
    \item When $n\ll p$, consistent estimation and detection are both statistically impossible.

    \item When $p\ll n$, consistent estimation and detection are statistically possible, but the computational perspective depends further on the sample size:
    \begin{enumerate}
        \item When $p\ll n\ll p^{d/2}$, we prove that detection is computationally infeasible and conjecture that consistent estimation is computationally infeasible.

        \item When $p^{d/2}\ll n\ll p^{d-1}$, there exist computationally efficient procedures, and we conjecture that the computationally optimal estimation error is ${p^{d/2}} / {n}$, which is strictly worse than the minimax statistical rate.

        \item When $p^{d-1}\ll n$, the sample moment and sample cumulant tensors already achieve the statistically optimal rate $\sqrt{p / n}$.
    \end{enumerate}
\end{enumerate}

\begin{figure}
    \centering
    \includegraphics[width=0.9\linewidth]{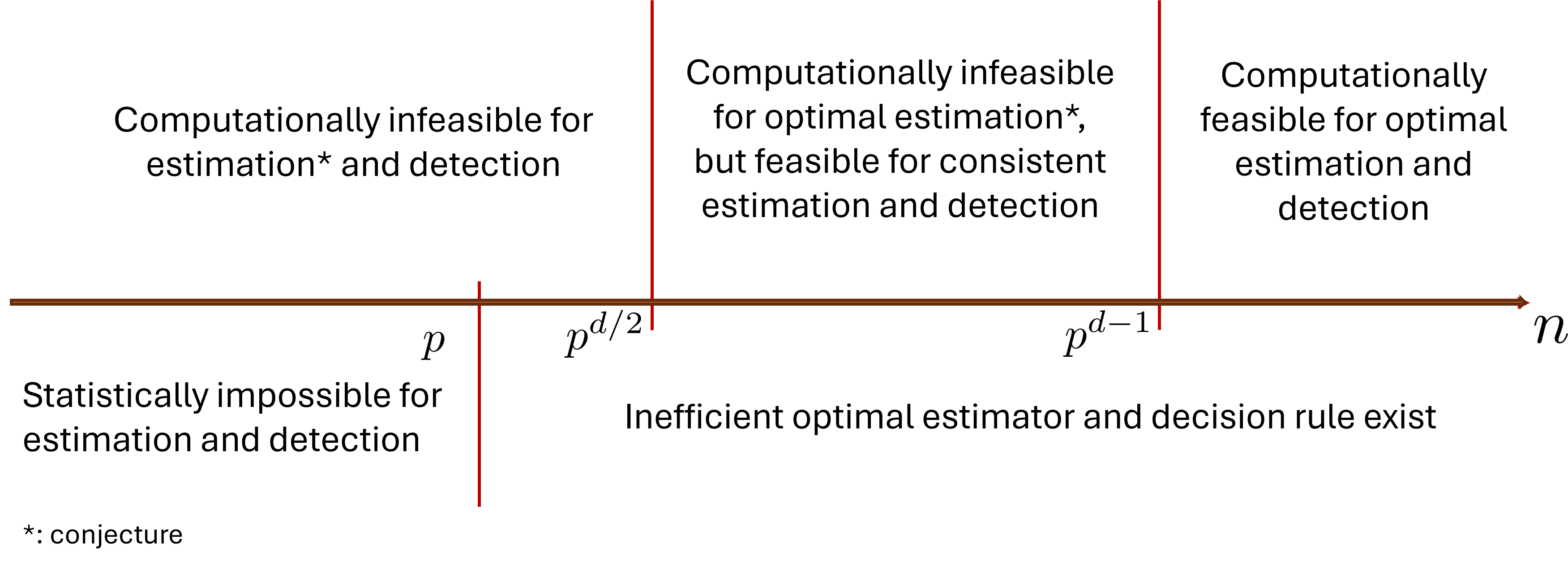}
    \caption{Phase diagram of statistical and computational limits in moment and cumulant tensor estimation and detection}
    \label{fig_1}
\end{figure}

The rest of the paper is organized as follows. Section \ref{sec:related-work} reviews related work. Section \ref{sec:notation} introduces notation and preliminaries. Section \ref{sec_statistical_limit} establishes the statistical limits of moment and cumulant tensor estimation, including minimax lower bounds, upper bounds for sample moments and cumulants, and a statistically optimal estimator.
Section \ref{sec_computational_limit} studies computational barriers via the low-degree framework, proves a detection lower bound, and develops polynomial-time testing procedures based on Frobenius-norm surrogates.
Section \ref{sec_open_problem} discusses the resulting open problems and conjectures, with particular emphasis on efficient consistent estimation and efficient rate-optimal estimation.

\subsection{Related Work}\label{sec:related-work}

We next discuss several lines of work related to this paper.

\paragraph{Detection--estimation gap}
The relationship among detection, estimation, and recovery has long been studied from both classical statistical and more recent computational perspectives. For example, \cite{ingster2010detection} develops sharp minimax detection boundaries in sparse regression and shows that procedures based on estimating the regression coefficients in $\ell_2$ norm generally require stronger signals than those needed for optimal detection. In sparse heterogeneous mixtures, \cite{donoho2004higher} characterizes the sharp detection boundary in the rare/weak regime and shows that higher criticism attains this boundary adaptively, thereby illustrating that global detection can succeed even when the underlying sparse signals are too weak for reliable identification of the affected coordinates. In network models, \cite{decelle2011asymptotic} predicts detectability phase transitions in the stochastic block model, while \cite{abbe2016exact,abbe2015community} establishes sharp thresholds for exact and partial recovery, again highlighting that different inferential goals can have different thresholds. More recently, \cite{bresler2023detection,mao2023detection} studies computational gaps between detection and stronger inferential tasks, including recovery and refutation, in planted-signal problems. Most of the works concern settings in which recovery or estimation is harder than detection; by contrast, our setting exhibits the opposite phenomenon, where detection can be strictly harder than estimation.

\paragraph{Independent component analysis (ICA)}

In the ICA model, one observes a random vector of the form $X = AS$, where $A$ is an unknown deterministic mixing matrix and $S$ has entrywise independent coordinates. A key step in many ICA algorithms is to exploit \emph{fourth-order} information, since after whitening, non-Gaussianity is often identified through deviations of the 4th moment (or 4th cumulant) from that of a Gaussian distribution. Consequently, both detection and estimation in ICA are intimately connected to the estimation of 4th moments and cumulants.

Recent works \cite{auddy2025large,szekely_learning_2024} establish sharp computational--statistical gaps for ICA: when $n<p$, reliable detection is statistically impossible, whereas for $n<p^2$ it becomes computationally hard under broad algorithmic frameworks. These thresholds are consistent with the phase transitions suggested by our general theory when specialized to $d=4$.

Despite this close connection, ICA fundamentally differs from the setting considered here, since the relevant 4th-order tensor in ICA exhibits strong algebraic structure induced by independence and the mixing model. In particular, the structured nature of the parameter often enables reductions from estimation to detection. In contrast, our work studies high-order moments and cumulants in a fully general (nonparametric) setting without imposing structural constraints such as independence, low rank, or sparsity. Technically, our low-degree lower bound in Section~\ref{sec_hypothesis_test} can be viewed as a generalization of the low-degree polynomial (LDP) construction developed in \cite{szekely_learning_2024}.

\paragraph{Tensor PCA} 

Tensor PCA concerns the estimation of a rank-one signal tensor embedded in noise. A canonical formulation assumes an observation model $Y = X + Z,$ 
where $X = a^{\otimes d}$ and $a\in\R^p$ is an unknown vector and $Z$ is a noise tensor (e.g., Gaussian). The computational hardness of tensor PCA has been extensively studied using a variety of frameworks, including reductions \citep{zhang2018tensor}, the low-degree polynomial framework \citep{kunisky_notes_2019}, memory- or communication-bounded models \citep{dudeja_statistical-computational_2024}, and the statistical query framework \citep{dudeja_statistical_2021}. These works collectively demonstrate the existence of a broad computationally hard regime for detection and estimation when the signal strength is below the algorithmic threshold, typically scaling as $\|X\| \ll p^{d/4}$ in the rank-one spiked tensor model.

Our setting differs in that we do not assume a spiked model or low-rank structure for the moment/cumulant tensor. Nevertheless, tensor PCA provides useful intuition: if one imposes a rank-one structure on the order-$d$ cumulant tensor (i.e., $\bcK_d \propto u^{\otimes d}$), then the tensor PCA detection threshold corresponds to $n \ll p^{d/2}$ in our sample-based formulation. As in ICA, the rank-one assumption enables a direct reduction from estimation to detection.

\paragraph{Robust covariance estimation} Robust covariance estimation under weak moment or heavy-tailed assumptions has been studied extensively.
\cite{catoni2016pac} develops PAC-Bayesian and M-estimation type procedures that estimate the covariance operator via uniform control of quadratic forms.
\cite{wei2017estimation} proposes computationally tractable covariance estimators for heavy-tailed distributions with strong nonasymptotic guarantees, complementing truncation- and median-of-means based approaches.
\cite{minsker2018sub} constructs sub-Gaussian covariance estimators achieving sharp spectral-norm guarantees under weak tail assumptions, demonstrating how truncation and robust mean ideas yield optimal operator-norm performance in the matrix setting.
\cite{ke_user-friendly_2019} provides practically implementable, heavy-tail-robust covariance estimators with user-friendly tuning and nonasymptotic spectral-norm bounds.
\cite{mendelson2020robust} establishes sharp robust covariance estimation under an $L_4$--$L_2$ norm equivalence (finite-kurtosis type) condition.

\paragraph{Polynomial chaos and concentration} Beyond second-order structure, higher-order moment and cumulant tensors are closely tied to polynomial chaoses.
The decoupling inequalities of \cite{kwapien1987decoupling} form a classical tool for reducing dependent multilinear forms to independent counterparts in high-degree chaos analysis.
\cite{latala2006estimates} proves two-sided moment and tail bounds for Gaussian chaoses of arbitrary order, a foundational reference for modern polynomial and tensor concentration results.
\cite{adamczak2015concentration} derives concentration inequalities for non-Lipschitz functions with bounded higher derivatives, yielding multilevel bounds for polynomials beyond the Lipschitz-concentration regime.
\cite{gotzeConcentrationInequalitiesPolynomials2021} establishes general concentration inequalities for polynomial functionals of independent random variables, providing tools directly relevant to high-degree moment and tensor analysis. 

\paragraph{Moment methods} Moment-based methods are also central to statistical inference in Gaussian mixture models (GMMs). 
\cite{WuYang2020GaussianMixturesDMM} gives a denoised method-of-moments procedure via projection to truncated moment space and proves statistical guarantees and adaptive optimality. 
\cite{doss2020optimal} highlights the role of moment tensors of the mixing distribution. 
\cite{pereira_tensor_2022} derives explicit formulas and discusses efficient implicit computations for GMM moment tensors. 
\cite{khouja2022tensor} proves that parameters of spherical Gaussian mixtures can be learned from empirical moments by forming symmetric moment tensors, and gives a linear-algebra-based decomposition algorithm. 
\cite{kottler2025method} studies how to recover Gaussian-mixture parameters in practice using the method of moments.

\paragraph{Computational hardness} From the algorithmic perspective, tensor norm computation and related inference problems are known to be computationally hard.
\cite{hillarMostTensorProblems2013a} shows that many basic tensor problems, including deciding and approximating the tensor spectral norm, are NP-hard.
\cite{hopkins2015tensor} analyzes Tensor PCA through sum-of-squares (SoS) proofs, demonstrating nontrivial recovery and certification guarantees and characterizing SoS power in spiked tensor models.
\cite{hopkins2017power} develops a general theory connecting SoS methods to spectral algorithms built from low-degree polynomial matrices for detecting planted structure, including tensor problems.
\cite{barak2016noisy} applies SoS relaxations to noisy tensor completion, illustrating how SoS can certify polynomial objectives closely related to tensor norms.
\cite{potechin2020machinery} provides general machinery for proving SoS lower bounds and derives SoS lower bounds for Tensor PCA, complementing upper-bound results on random tensors.
\cite{wein2023average} studies average-case complexity for random tensor decomposition within the low-degree framework, identifying regimes where low-degree methods succeed or fail.
\cite{brennan2020statistical} shows a close connection between statistical query algorithms and low-degree tests, supporting low-degree as a computational model for many high-dimensional testing problems.
\cite{dudeja2021statistical} establishes statistical query lower bounds for Tensor PCA, providing hardness evidence under another widely used restricted-algorithm framework.
\cite{hopkins2017efficient} develops a low-degree/SoS perspective for certain Bayesian inference tasks, including community detection. \cite{luo2020open} highlights hypergraphic planted clique detection as a central average-case hardness conjecture for tensor problems and raises the question of whether its hardness is computationally equivalent to that of classical planted clique detection.

\section{Preliminaries and Notation}\label{sec:notation}

\paragraph{Notation}

We use $c, C, C'$, etc.\ to denote positive constants whose values may vary from line to line, and use subscripts to indicate their dependence on parameters. For example, $C_{\beta,d}$ denotes a constant that depends only on $\beta$ and $d$, and not on any other parameters such as $p$. 
For a positive integer $d$, write $[d]=\{1,\ldots,d\}$. Vectors are denoted by lowercase letters, matrices by uppercase letters, and tensors by calligraphic letters such as $\bcT\in\R^{p_1\times\cdots\times p_d}$. For $x\in\R^p$, let $\|x\|$ be its Euclidean norm, and let $\mathbb S^{p-1}=\{x\in\R^p:\|x\|=1\}$ be the unit sphere. For any random variable or random vector $X$, $Y$, we write $X \overset{d}{\sim} Y$ if they have the same distribution. For a matrix $A$, let $\|A\|$ and $\|A\|_F$ denote its spectral norm and Frobenius norm, respectively. For a tensor $\bcT\in(\R^p)^{\otimes d}$, its spectral norm is
\[
\|\bcT\|:=\sup_{u_1,\ldots,u_d\in\mathbb S^{p-1}}\langle \bcT, u_1\otimes\cdots\otimes u_d\rangle. 
\]
We write $\|\bcT\|_F$ for the tensor Frobenius norm.    
For a random vector $X\in\R^p$ and an integer $r\ge 1$, let $\bcM_r=\E[X^{\otimes r}]$ denote the $r$-th moment tensor and let $\bcK_r$ denote the $r$-th cumulant tensor. For indices $i_1,\ldots,i_r\in[p]$, the corresponding entries are written as $(\bcM_r)_{i_1,\ldots,i_r}$ and $(\bcK_r)_{i_1,\ldots,i_r}$. 

For a real-valued random variable $Y$ and $\beta>0$, define the Orlicz norm by 
\[
    \|Y\|_{\Phi_\beta}:=\inf\{t>0:\E\exp(|Y|^\beta/t^\beta)\le 2\}. 
\]
We call a random vector $X\in\R^p$ $\beta$-subexponential for some $\beta>0$ if
\[
\|v^\top X\|_{\Phi_\beta}
:=\inf\Big\{t>0:\ \mathbb{E}\exp\big(|v^\top X|^\beta/t^\beta\big)\le 2\Big\}
\le 1,
\qquad \forall v \in \mathbb{S}^{p-1}.
\]
In particular, $\beta=2$ corresponds to the sub-Gaussian case and $\beta=1$ to the sub-exponential case.

\paragraph{Moment and cumulant tensors} We briefly review the definitions of moment and cumulant tensors and their relationship. Let $X=(X_1,\ldots,X_p)^\top\in\R^p$ be a random vector with finite moments up to order $d$.
For $r\ge 1$, the $r$-th moment tensor is $\bcM_r=\E[X^{\otimes r}]\in (\R^{p})^{\otimes r}$, 
whose entries are $(\bcM_r)_{i_1,\ldots,i_r}=\E[X_{i_1}\cdots X_{i_r}]$. 

The cumulant tensors are defined from the cumulant generating function, assuming the cumulant generating function exists in a neighborhood of the origin: for $t\in\R^p$, $K_X(t):=\log \E\exp(t^\top X)$. 
The $r$-th cumulant tensor $\bcK_r\in(\R^p)^{\otimes r}$ is defined entrywise by
\[
(\bcK_r)_{i_1,\ldots,i_r}
:=
\frac{\partial^r K_X(t)}{\partial t_{i_1}\cdots \partial t_{i_r}}\Big|_{t=0}.
\]
In particular, $\bcK_1=\E[X]$, and when $X$ is centered, $\bcK_2$ and $\bcK_3$ coincide with the second and third central moment tensors.

To describe the general relation between moments and cumulants, we use set partitions.
Let $\mathcal P([d])$ denote the collection of all partitions of $[d]$.
A partition $\pi\in\mathcal P([d])$ is a collection of nonempty, pairwise disjoint subsets $\pi=\{B_1,\ldots,B_m\}$, called the {blocks} of $\pi$, whose union is $[d]$.
For a block $B=\{j_1,\ldots,j_r\}\subset[d]$, we write $i_B:=(i_{j_1},\ldots,i_{j_r})$, where the indices are taken in increasing order.
Then the moment--cumulant formula \citep{speed1983cumulants} states that
\[
(\bcM_d)_{i_1,\ldots,i_d}
=
\sum_{\pi\in\mathcal P([d])}
\prod_{B\in\pi} (\bcK_{|B|})_{i_B}.
\]

Conversely, cumulants can be recovered from moments by M\"obius inversion on the lattice of set partitions. 
M\"obius inversion on the lattice of set partitions is a general combinatorial method for reversing a summation formula over all ways of partitioning a set, using coefficients given by the M\"obius function of that partially ordered set. 
Here the partition lattice is ordered by refinement: $\pi\preceq \sigma$ if every block of $\pi$ is contained in a block of $\sigma$. 
The inversion formula \citep{speed1983cumulants} for this lattice yields
\[
(\bcK_d)_{i_1,\ldots,i_d}
=
\sum_{\pi\in\mathcal P([d])}
\mu(\pi)\prod_{B\in\pi} (\bcM_{|B|})_{i_B},
\qquad
\mu(\pi)=(-1)^{|\pi|-1}(|\pi|-1)!,
\]
where $|\pi|$ denotes the number of blocks in $\pi$.
Thus, cumulants are alternating polynomial combinations of lower-order moments, and moments are polynomial combinations of lower-order cumulants.

In particular, there exist tensor-valued polynomials $F_d$ and $G_d$, depending only on $d$, such that
\begin{equation}\label{eq_relation_cumulant_moment}
\bcK_d = \bcM_d + F_d(\bcM_1,\ldots,\bcM_{d-1}),
\qquad
\bcM_d = \bcK_d + G_d(\bcK_1,\ldots,\bcK_{d-1}).
\end{equation}
That is, the difference between the $d$-th cumulant tensor and the $d$-th moment tensor depends only on tensors of orders strictly smaller than $d$.
Throughout, products appearing in these polynomials are understood entrywise according to the partition formulas above.

\section{Statistical Limit}\label{sec_statistical_limit} 

\subsection{Lower Bounds for Estimation and Detection}\label{sec_lower_bound} 

We begin by establishing the statistical minimax lower bounds for estimating high-order moment and cumulant tensors in spectral norm.

\begin{theorem}[Lower bound for estimating $d$-th order cumulant and moment tensors]
\label{thm_lower_bound_cumulant_general_d}
Let $d \geq 3$ and $p > C_1$ for some absolute constant $C_1>0$.
Define the class of sub-Gaussian random vectors as
\[
\mathcal{F}
=
\Big\{
X \in \mathbb{R}^p :
\|v^\top X\|_{\Phi_2} \leq 2,\ \forall v \in \mathbb{S}^{p-1}
\Big\}. 
\]
Then for any estimators based on $n$ i.i.d. samples, we have
\[
\inf_{\widehat{\bcK}} \sup_{X \in \mathcal{F}}
\E \big\| \widehat{\bcK} - \bcK_d(X) \big\|
\;\ge\;
c_d \Big( \sqrt{\frac{p}{n}} \wedge 1 \Big),
\]
and
\[
\inf_{\widehat{\bcM}}\sup_{X \in \mathcal{F}}
\E \big\| \widehat{\bcM} - \E X^{\otimes d} \big\|
\;\ge\;
c_d \Big( \sqrt{\frac{p}{n}} \wedge 1 \Big),
\]
where $\bcK_d(X)$ denotes the $d$-th order cumulant tensor of $X$ and $c_d>0$ is a constant depending only on $d$.
\end{theorem}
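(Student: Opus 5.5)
We will use the standard Fano / Varshamov--Gilbert reduction. The task is to build a family of distributions in $\mathcal F$, indexed by a packing of the sphere, whose $d$-th cumulant (or moment) tensors are pairwise separated by order $\delta$ in spectral norm, while their pairwise KL divergences are of order $n\delta^2$. Taking $\delta\asymp \sqrt{p/n}\wedge 1$ then gives the bound.

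\textbf{Construction.} Let $\{v_1,\dots,v_M\}\subset\mathbb S^{p-1}$ be a packing with $M\ge e^{c p}$ and $\|v_j-v_l\|\ge 1/2$ for $j\ne l$; it exists for $p>C_1$. Let $Z\sim N(0,I_p)$ and let $\xi$ be a bounded, symmetric-in-law-of-lower-moments scalar variable with $\E\xi=0$, $\E\xi^2=1$ and $\kappa_d(\xi)\ne0$. For $d$ odd a centered skewed two-point variable works. For $d$ even take $\xi$ Rademacher, or more precisely a scalar law whose first $d-1$ cumulants match those of $N(0,1)$ but whose $d$-th cumulant is nonzero; such a law exists by a moment-matching construction with bounded support. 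For $\delta\in(0,c]$ define
\[
X^{(j)} = Z + \big(\sqrt{1+\delta^{2/d}\,\cdot}\ \text{adjustment}\big)\ldots
\]
This is cleaner to do via a mixture: let $X^{(j)}$ have $v_j^\top X^{(j)}\overset{d}{\sim}\eta_\delta$ and $P_{v_j}^\perp X^{(j)}\sim N(0,P_{v_j}^\perp)$, independent of each other. Here $\eta_\delta$ is the scalar law with density $\phi(x)(1+\delta\,h(x))$ and $h=H_d$ is a truncated Hermite polynomial, or any bounded $h$ orthogonal to $1,x,\dots,x^{d-1}$ in $L^2(\phi)$ with $\int h\,x^d\phi\ne0$. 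Then the first $d-1$ moments of $\eta_\delta$ are Gaussian, so $\kappa_r(\eta_\delta)=\kappa_r(N(0,1))$ for $r<d$ and $\kappa_d(\eta_\delta)=c_h\delta$. By additivity of cumulants over the independent orthogonal components, $\bcK_d(X^{(j)})=c_h\delta\, v_j^{\otimes d}$. Since all lower moments are common to every $j$, the relation \eqref{eq_relation_cumulant_moment} gives $\bcM_d(X^{(j)})=c_h\delta\,v_j^{\otimes d}+G_d(\text{common})$. Thus the moment and cumulant cases reduce to the same separation. Sub-Gaussianity with constant $2$ holds for $\delta\le c$ because $1+\delta h$ is a bounded perturbation of the Gaussian density.

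\textbf{Separation.} We need $\|v_j^{\otimes d}-v_l^{\otimes d}\|\ge c$. Evaluating at $u=v_j$ gives $1-\langle v_j,v_l\rangle^d\ge 1-\langle v_j,v_l\rangle\ge \|v_j-v_l\|^2/2\ge 1/8$. Hence the pairwise separation is at least $c\,\delta$.

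\textbf{KL bound.} The density ratio of $X^{(j)}$ to $N(0,I_p)$ is $1+\delta h(v_j^\top x)$, so
\[
\mathrm{KL}\big(P_j^{\otimes n}\,\|\,N(0,I)^{\otimes n}\big)\le n\chi^2\le n\delta^2\|h\|_{L^2(\phi)}^2 .
\]
Using the Gaussian as the center in Fano's inequality (the mutual-information bound), or bounding the pairwise KL divergences directly by $Cn\delta^2$, Fano's inequality yields error probability at least $1/2$ whenever $Cn\delta^2\le c p/4$. Choosing $\delta=c'\big(\sqrt{p/n}\wedge 1\big)$ then gives $\E\|\widehat\bcK-\bcK_d\|\ge c_d(\sqrt{p/n}\wedge1)$, and identically for $\bcM_d$.

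\textbf{Main obstacle.} The delicate step is building the scalar perturbation $h$ so that all three requirements hold simultaneously:
\begin{itemize}
\item the perturbed density stays nonnegative;
\item the law remains sub-Gaussian with the fixed constant $2$, uniformly in $\delta\le c$;
\item all cumulants of order $<d$ are exactly Gaussian, so that the moment and cumulant tensors differ across hypotheses only through the $d$-th order term.
\end{itemize}
I would take $h(x)=H_d(x)\mathbf 1_{|x|\le R}$ corrected by a bounded linear combination of $\{x^k\mathbf 1_{|x|\le R}\}_{k<d}$ to restore orthogonality to low-degree polynomials. This keeps $h$ bounded, so $|\delta h|\le 1/2$ for small $\delta$.
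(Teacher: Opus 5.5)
\textbf{Gap: the separation step fails for even $d$.} You take an arbitrary $1/2$-packing and argue $\|v_j^{\otimes d}-v_l^{\otimes d}\|\ge 1-\langle v_j,v_l\rangle^d\ge 1-\langle v_j,v_l\rangle\ge 1/8$. The middle inequality needs $\langle v_j,v_l\rangle\ge 0$. For odd $d$ your conclusion still holds, since $1-\alpha^d\ge 1$ when $\alpha\le 0$. For even $d$ it is false. A $1/2$-packing can contain antipodal or nearly antipodal pairs, and $(-v)^{\otimes d}=v^{\otimes d}$, so such pairs have the same cumulant tensor and the same moment tensor. In fact your $h$ is even when $d$ is even: it is exactly the $L^2(\phi)$-residual of $x^d\mathbf 1_{\{|x|\le R\}}$ after projecting out $x^k\mathbf 1_{\{|x|\le R\}}$, $k<d$. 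Hence $X^{(j)}$ and $X^{(l)}$ have identical laws when $v_l=-v_j$. The fix is to control $|\langle v_j,v_l\rangle|$, not only $\|v_j-v_l\|$. Any of the following works:
\begin{itemize}
\item require $\|v_j+v_l\|\ge 1/2$ in addition to $\|v_j-v_l\|\ge 1/2$ (a packing of projective space);
\item require pairwise positive inner products in addition to $\|v_j-v_l\|\ge 1/2$, as in the hypothesis of Lemma~\ref{lemma_kronecker_lower_bound} and in the paper's moment packing;
\item use $v_\sigma=\sigma/\sqrt p$ over sign vectors with pairwise Hamming distances in $[p/4,3p/4]$.
\end{itemize}
Each still gives $\log M\ge cp$. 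Then $|\langle v_j,v_l\rangle|\le 7/8$ yields $1-\langle v_j,v_l\rangle^d\ge 1-|\langle v_j,v_l\rangle|\ge 1/8$ for every $d$.

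\textbf{Smaller points.} First, $\E e^{(w^\top X^{(j)})^2/4}\le (1+\delta\|h\|_\infty)\,\E e^{(w^\top Z)^2/4}=\sqrt2\,(1+\delta\|h\|_\infty)$. So staying in $\mathcal F$ requires $\delta\|h\|_\infty\le\sqrt2-1$, and $|\delta h|\le 1/2$ is not quite enough. Second, drop the preliminary two-point/Rademacher choice of $\xi$. No two-point law has vanishing third and fourth cumulants simultaneously, so it only works for $d\le 4$, and your $\eta_\delta$ supersedes it. Third, the nonvanishing you worried about is automatic. By the residual description, $\int h\,x^d\phi=\|h\|_{L^2(\phi)}^2>0$ for every $R>0$.

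\textbf{Comparison with the paper.} With the packing repaired, your proof is correct but takes a different route. The paper uses two separate families:
\begin{itemize}
\item for cumulants, the exponential tilt $dP_{u,\theta}/dP_0\propto\exp(\theta h_M(u^\top x))$ with truncated $H_d$; Taylor expansion in $\theta$ gives $\bcK_d(X_{u,\theta})=(C_{M,d}\theta+O(\theta^2))u^{\otimes d}$ and pairwise KL at most $C\theta^2\|u-v\|^2+C\theta^3$;
\item for moments, the Gaussian location family $N(\mu_i,I_p)$.
\end{itemize}
Your linear perturbation $\phi(x)(1+\delta h(x))$ with $h\perp\{1,x,\dots,x^{d-1}\}$ makes every cumulant of order below $d$ exactly that of $N(0,I_p)$. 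Hence $\bcK_d(X^{(j)})=\E[(X^{(j)})^{\otimes d}]-\E[Z^{\otimes d}]=c_h\delta\,v_j^{\otimes d}$ with no remainder. The KL to the Gaussian center is bounded through $\chi^2$ by $n\delta^2\|h\|_{L^2(\phi)}^2$, with no expansion to control. A single construction therefore covers moments and cumulants for both parities of $d$. In the location family, the moment separation linear in $\|\mu_i-\mu_j\|$ comes from the $\operatorname{sym}(\mu\otimes I^{\otimes (d-1)/2})$ term, which exists only for odd $d$. The tilt's advantage is that positivity of the density comes for free. You instead get positivity from boundedness of $h$ and small $\delta$, a restriction you need anyway for the sub-Gaussian constant.
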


Next, we turn to the associated detection problem
\beas
H_0:\ \bcT_d = 0
\qquad\text{vs.}\qquad
H_1:\ \|\bcT_d\| \ge \kappa,
\eeas
where $\bcT_d$ denotes either the $d$-th moment tensor or the $d$-th cumulant tensor.

Theorem~\ref{thm_lower_bound_cumulant_general_d} shows that consistent estimation is impossible when $n\lesssim p$. We now show that the same scaling is also necessary for reliable testing: if $n\ll p$, then no test can distinguish the null from alternatives separated by less than order $\sqrt{p/n}\wedge 1$ in spectral norm.

\begin{theorem}[Statistical lower bound for detection]
\label{thm_lower_bound_detection}
Let $d\ge 3$, and let
\[
\mathcal F
=
\Big\{
X\in\R^p:\ \|v^\top X\|_{\Phi_2}\le 2,\ \forall v\in \mathbb S^{p-1}
\Big\}.
\]
For $\rho>0$, define the cumulant testing problem
\[
H_0^{(K)}:\ \bcK_d(X)=0
\qquad \text{vs.}\qquad
H_1^{(K)}(\rho):\ \|\bcK_d(X)\|\ge \rho,
\]
with $X\in\mathcal F$. Let
\[
\mathfrak R_n^{(K)}(\rho)
:=
\inf_{\psi}
\left\{
\sup_{X\in\mathcal F:\,\bcK_d(X)=0}\PP_X^{\otimes n}(\psi=1)
+
\sup_{X\in\mathcal F:\,\|\bcK_d(X)\|\ge \rho}\PP_X^{\otimes n}(\psi=0)
\right\},
\]
where the infimum is over all tests $\psi\in\{0,1\}$.
Then there exist constants $c_d,\eta_d>0$, depending only on $d$, such that
\[
\mathfrak R_n^{(K)}(\rho)\ge \eta_d
\qquad\text{whenever}\qquad
\rho \le c_d\Big(\sqrt{p/n}\wedge 1\Big).
\]

Moreover, if $d$ is odd, the same statement holds for moment detection:
\[
H_0^{(M)}:\ \E X^{\otimes d}=0
\qquad \text{vs.}\qquad
H_1^{(M)}(\rho):\ \|\E X^{\otimes d}\|\ge \rho.
\]
That is, there exist constants $c_d',\eta_d'>0$, depending only on $d$, such that
\[
\mathfrak R_n^{(M)}(\rho)\ge \eta_d'
\qquad\text{whenever}\qquad
\rho \le c_d'\Big(\sqrt{p/n}\wedge 1\Big).
\]
\end{theorem}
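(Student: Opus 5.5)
We will use Le Cam's two-point (more precisely, point-versus-mixture) method. We fix a single null distribution $P_0$ and a prior over alternatives $\{P_v\}$ indexed by $v$ uniform on the sphere (or on a packing of it). Then
\[
\mathfrak R_n\ge 1-\mathrm{TV}\big(P_0^{\otimes n},\E_v P_v^{\otimes n}\big),
\]
and we bound the total variation through the $\chi^2$-divergence: $\chi^2(\E_vP_v^{\otimes n}\|P_0^{\otimes n})=\E_{v,v'}(1+\langle \ell_v,\ell_{v'}\rangle_{P_0})^n-1$, where $\ell_v=dP_v/dP_0-1$.

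\textbf{Construction.} For the null we take $X\sim N(0,I_p)$. It lies in $\mathcal F$ (with a constant adjustment) and has $\bcK_d=0$. For odd $d$ it also has $\E X^{\otimes d}=0$, which covers the moment statement.

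For the alternative we take $X=Z+(\xi-g)$-type perturbations along a direction $v\in\mathbb S^{p-1}$. Concretely, $X=(I-vv^\top)Z+Y_\theta v$, where $Z\sim N(0,I_p)$ and $Y_\theta$ is an independent scalar with density
\[
\phi(y)\big(1+\theta h_d(y)\big)\quad\text{(suitably truncated or smoothed so it stays a sub-Gaussian density)},
\]
with $h_d$ the normalized Hermite polynomial of degree $d$. The orthogonality of Hermite polynomials ensures that the first $d-1$ moments of $Y_\theta$ match those of $N(0,1)$. Hence all cumulants of orders $3,\dots,d-1$ vanish, the mean is $0$ and the variance is $1$, and the $d$-th cumulant equals the $d$-th moment deviation, namely $c\,\theta$. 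Since the moments of $X$ factorize along $v$ and $v^\perp$, we obtain $\bcK_d(X)=c_d\theta\,v^{\otimes d}$ and $\|\bcK_d(X)\|=c_d\theta$. For odd $d$, $\E X^{\otimes d}=c_d\theta v^{\otimes d}$ as well, because lower moments are Gaussian and vanish at odd order. To keep $Y_\theta$ a genuine sub-Gaussian density we replace $h_d$ by a bounded smooth function $f$ with $\int f\phi\,y^k=0$ for $k<d$ and $\int f\phi\, y^d=1$. Such an $f$ exists by linear algebra on compactly supported bumps. The density is then $\phi(1+\theta f)$, which is nonnegative once $\theta\le 1/\|f\|_\infty$.

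\textbf{$\chi^2$ computation.} This is the main step. The likelihood ratio is $dP_v/dP_0(x)=1+\theta f(\langle v,x\rangle)$, so
\[
\langle \ell_v,\ell_{v'}\rangle_{P_0}=\theta^2\,\E[f(G)f(G')],
\]
where $(G,G')$ are standard Gaussians with correlation $t=\langle v,v'\rangle$. Expanding $f$ in Hermite polynomials, the moment matching kills all coefficients of degree $<d$. By Mehler's formula this gives $\E f(G)f(G')=\sum_{k\ge d}a_k^2t^k\le C_f|t|^d$. Therefore
\[
\chi^2\le \E_t\exp\big(n\theta^2C|t|^d\big)-1 .
\]
For $v,v'$ uniform on the sphere, $t$ is concentrated at scale $p^{-1/2}$ with sub-Gaussian tails $\PP(|t|>s)\le 2e^{-ps^2/2}$. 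Since $|t|^d\le t^2$, we get $\E e^{n\theta^2Ct^2}-1\le C'n\theta^2/p$ whenever $n\theta^2\le cp$. Thus $\chi^2\le 1/2$ once $\theta\le c\sqrt{p/n}$, and the constraint $\theta\le c$ gives the $\wedge 1$. Setting $\rho=c_d\theta$ then yields $\mathrm{TV}\le\sqrt{\chi^2}/1\le 1-\eta_d$, as claimed.

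\textbf{Remaining checks and expected obstacle.} I expect the delicate points to be the following.
\begin{enumerate}
\item Verifying that $X\in\mathcal F$ uniformly in $v$ and $\theta\le c$. This should follow since $Y_\theta$ has density bounded by $(1+c\|f\|_\infty)\phi$, which yields a $\Phi_2$ bound near $1$ plus room up to $2$.
\item Controlling the tail integral $\E e^{n\theta^2 C|t|^d}$ when $n\theta^2$ is large compared with $p$ but $\theta\le 1$. This occurs when $n<p$ only through the $\wedge1$ cap, and there the exponent is at most $Cn t^2\le Cp\,t^2\cdot(n/p)$, which is still integrable against $e^{-pt^2/2}$ for small constants.
\end{enumerate}
The exact Hermite-tail bound $\sum_{k\ge d}a_k^2|t|^k\le C|t|^d$ for $|t|\le1$ is immediate since $\sum a_k^2=\|f\|_{L^2(\phi)}^2<\infty$.
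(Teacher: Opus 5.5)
Your proof is correct, and it takes a genuinely different route from the paper's.

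\textbf{What the paper does.} It reuses the tilted family $dP_{u,\theta}/dP_0\propto\exp(\theta h_M(u^\top x))$ from the proof of Theorem~\ref{thm_lower_bound_cumulant_general_d}, placed on a packing $\{u_1,\dots,u_N\}$ with $\log N\gtrsim p$. It bounds only the divergence of each alternative from the null, $D(P_{u,\theta}^{\otimes n}\|P_0^{\otimes n})\le Cn\theta^2$. It then invokes a ``testing form of Fano's lemma'': $\frac1N\sum_i D(Q_i\|Q_0)\le\alpha_0\log N$ would force $\inf_\psi\{Q_0(\psi=1)+\frac1N\sum_iQ_i(\psi=0)\}\ge c_0$.

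\textbf{Why that shortcut fails.} It avoids any computation over pairs of alternatives, but it is not valid for a binary test. Fano's condition controls identifying which $Q_j$ generated the data, not the distance from $Q_0$ to the mixture $\frac1N\sum_iQ_i$. For a counterexample, let every $Q_i$ equal one $Q_1$ with $D(Q_1\|Q_0)=\alpha_0\log N\to\infty$; the binary test is then trivial.

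\textbf{What your route supplies.} Your point-versus-mixture $\chi^2$ bound provides exactly the missing ingredient. The cross terms $\E_{P_0}[L_vL_{v'}]-1=\theta^2\sum_{k\ge d}a_k^2\langle v,v'\rangle^k$ are small because independent random directions are nearly orthogonal. Your additive tilt with exact moment matching keeps this clean: the likelihood ratio $1+\theta f(\langle v,x\rangle)$ is explicit, $\|\bcK_d\|=\theta$ holds with no $O(\theta^2)$ remainder, and $a_0=a_1=0$ makes the overlap $O(t^2)$ rather than $O(|t|)$.

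The paper's truncated tilt lacks that last property when $d$ is odd. Its mean is $\theta\,\E[Uh_M(U)]+O(\theta^3)$ with $\E[Uh_M(U)]\neq0$ for the fixed $M=M(d)$. Hence, against the point null $N(0,I_p)$, the statistic $\frac{1}{n(n-1)}\sum_{i\neq j}X_i^\top X_j$ separates that family at $\theta\asymp\sqrt{p/n}$ once $n\ge p$ and $p$ is large.

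\textbf{Minor remarks.} The second ``obstacle'' you list never arises, since $\theta\le c_d(\sqrt{p/n}\wedge1)$ already gives $n\theta^2\le c_d^2p$. You may also use the sharper $\mathrm{TV}\le\frac12\sqrt{\chi^2}$.
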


\begin{remark}[Detection is trivial for even-order moment tensors.]
    We restrict attention to odd-order moments throughout. If an even-order moment vanishes under $H_0$, then this necessarily implies $\PP_{H_0}(X=0)=1$, 
    rendering the testing problem degenerate. In particular, the lower bound is trivial in this case.
\end{remark}

Thus, detection and consistent estimation share the same statistical lower bound in the present problem: up to constants, the critical statistical scale for both tasks is $\sqrt{p/n}\wedge 1$.

\subsection{Sample Moment Tensors Can Be Suboptimal}\label{sec_sample_moments}

We next study the most natural baseline estimator for moment and cumulant tensors in the absence of additional structural assumptions, namely the sample moment tensor. The goal of this subsection is to show that, unlike the covariance case, this estimator can be statistically suboptimal for higher-order estimation.

A first indication comes from the following lower bound. Proposition~3.1 in \cite{al-ghattas_sharp_2025} shows that there exists a Gaussian random vector $X \in \R^p$ with sub-Gaussian norm 1 such that
\[
    \E \Big\|\frac{1}{n}\sum_{i=1}^n X_i^{\otimes d} - \mathbb{E}[X^{\otimes d}]\Big\|
    \geq C_d \max\Big\{\frac{p^{d/2}}{n},\ \sqrt{\frac{p}{n}}\Big\}.
\]
Thus, even under sub-Gaussianity, the sample moment tensor can exhibit an operator-norm error strictly larger than the minimax rate when $d \ge 3$.

The next theorem shows that this lower bound is essentially sharp more generally under $\beta$-subexponential tails.

\begin{theorem}[Deviation bound for sample moment tensors]\label{thm_upper_bound_sample_cumulants}
Let $X_1,\ldots,X_n \stackrel{\mathrm{iid}}{\sim} X \in \mathbb{R}^p$ be a $\beta$-subexponential random vector for some $\beta>0$. 
Then, with probability at least $1-\exp(-c_{\beta,d}p)$, 
\[
\Big\|\frac{1}{n}\sum_{i=1}^n X_i^{\otimes d} - \mathbb{E}[X^{\otimes d}]\Big\|
\le
C_{\beta,d}\max\Big\{\frac{p^{d/\beta}}{n},\ \sqrt{\frac{p}{n}}\Big\},
\]
and
\[
\Big\|\frac{1}{n}\sum_{i=1}^n (X_i-\bar X)^{\otimes d}
- \mathbb{E}[(X-\mu)^{\otimes d}]\Big\|
\le
C_{\beta,d}\max\Big\{\frac{p^{d/\beta}}{n},\ \sqrt{\frac{p}{n}}\Big\},
\]
where $\bar X = n^{-1}\sum_{i=1}^n X_i$ and $\mu=\mathbb{E}X$.
\end{theorem}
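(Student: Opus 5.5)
We control the spectral norm of the centered sum through a net argument combined with a truncation of each summand's norm. Write $\mathcal{E}=\frac1n\sum_i (X_i^{\otimes d}-\E X^{\otimes d})$. A standard $1/2$-net $\mathcal N$ of $\mathbb S^{p-1}$ with $|\mathcal N|\le 5^p$ reduces the problem to bounding
\[
\sup_{u_1,\dots,u_d\in\mathcal N}\langle \mathcal E,u_1\otimes\cdots\otimes u_d\rangle,
\]
at the cost of a factor $C_d$. This uses the usual multilinear net inequality $\|\mathcal E\|\le (1-d/2)^{-1}\sup_{\mathcal N}$, or a finer net with spacing $1/(2d)$ and cardinality $(6d)^p$. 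For fixed unit vectors the summand is $Y_i=\prod_{k}(u_k^\top X_i)$. By Hölder's inequality for Orlicz norms, $\|Y_i\|_{\Phi_{\beta/d}}\le C_{\beta,d}$. A Bernstein inequality for $\alpha=\beta/d$ subexponential variables (with $\alpha<1$ allowed, e.g. via Gluskin–Kwapień or the Kuchibhotla–Chakrabortty bound) then gives
\[
\Big|\frac1n\sum_i (Y_i-\E Y_i)\Big|\lesssim \sqrt{t/n}+t^{d/\beta}/n
\]
with probability $1-e^{-t}$. Taking $t\asymp dp\log(6d)$ and a union bound over $|\mathcal N|^d$ tuples yields
\[
\sqrt{p/n}+p^{d/\beta}/n
\]
with probability $1-e^{-cp}$. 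This is exactly the claimed rate, so no truncation is needed in the first statement. The step to check carefully is the Bernstein tail for $\alpha<1$: for sums of independent centered variables with $\|\cdot\|_{\Phi_\alpha}\le K$, one has
\[
\PP\Big(|\textstyle\sum Y_i|\ge C(\sqrt{nt}K+t^{1/\alpha}K)\Big)\le 2e^{-t},
\]
which holds (up to a $\log$-free constant) for $\alpha\le1$. The $\max\{\cdot\}$ in the statement is just the sum up to a factor of two.

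For the centered version, expand
\[
(X_i-\bar X)^{\otimes d}=\sum_{S\subseteq[d]}(\pm)\,(X_i-\mu)^{\otimes S}\otimes(\bar X-\mu)^{\otimes S^c},
\]
suitably permuted, after writing $X_i-\bar X=(X_i-\mu)-(\bar X-\mu)$. Note that $X-\mu$ is again $\beta$-subexponential with constant $C_\beta$, since $\|\mu\|\lesssim 1$. The $S=[d]$ term is handled by the first part applied to $X-\mu$. For $|S^c|=j\ge1$, the averaged term is
\[
\Big(\frac1n\sum_i (X_i-\mu)^{\otimes (d-j)}\Big)\otimes(\bar X-\mu)^{\otimes j},
\]
whose spectral norm is at most $\|\frac1n\sum_i (X_i-\mu)^{\otimes(d-j)}\|\cdot\|\bar X-\mu\|^j$. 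By the first part at order $d-j$, together with $\|\E(X-\mu)^{\otimes(d-j)}\|\le C$ (with the order-$1$ average being $0$), the first factor is at most $C(1+\sqrt{p/n}+p^{(d-j)/\beta}/n)$. The mean satisfies $\|\bar X-\mu\|\lesssim\sqrt{p/n}+p^{1/\beta}/n$ with probability $1-e^{-cp}$, by the same net/Bernstein argument at order $1$.

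The remaining step is bookkeeping to verify that each product is $\lesssim \sqrt{p/n}+p^{d/\beta}/n$. Here one uses $\sqrt{p/n}^{\,j}\le \sqrt{p/n}$ when $p\le n$, and the fact that the claimed bound already exceeds a constant times $\sqrt{p/n}\ge1$ otherwise. The polynomial cross terms, such as $(p^{1/\beta}/n)^j\cdot p^{(d-j)/\beta}/n$, are bounded by $p^{d/\beta}/n$ whenever $p^{1/\beta}\le n$; when $p^{1/\beta}>n$, it suffices that every term is a power of quantities each dominated by $\max\{1,\ \sqrt{p/n},\ p^{d/\beta}/n\}$, with the total exponent arranged appropriately. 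I expect this case analysis, including the regime where the bound exceeds $1$, to be the main tedious point. The essential probabilistic difficulty is the heavy-tailed ($\alpha=\beta/d<1$) Bernstein bound combined with the union bound over $e^{Cdp}$ net points, which produces precisely the $p^{d/\beta}/n$ term.
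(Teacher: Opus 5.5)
Your proof follows essentially the same route as the paper: a net on the sphere, the fact that a degree-$d$ product of $\beta$-subexponential marginals is $\beta/d$-subexponential, the $\alpha\le 1$ Bernstein bound (Lemma~\ref{lemma_subexpo_concentration}) at level $t\asymp p$, and a union bound. For the centered tensor you also use, as the paper does, a binomial expansion in $(X_i-\mu)-(\bar X-\mu)$.

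The differences are minor. You net over $d$-tuples and use H\"older for Orlicz quasi-norms. The paper instead first reduces to $\sup_u|\langle\mathcal E,u^{\otimes d}\rangle|$ via Lemma~\ref{lemma_symmetric_tensor_approximation}, so that $\|(u^\top X)^d\|_{\Phi_{\beta/d}}=\|u^\top X\|_{\Phi_\beta}^d$ is immediate and one net suffices. Your extra factor $|\mathcal N|^{d-1}=e^{O(dp)}$ is harmless, and your version does not need symmetry. For the centered part you treat general $d$ through $\|\mathcal A\otimes v^{\otimes j}\|=\|\mathcal A\|\,\|v\|^j$, whereas the paper writes out the scalar expansion only for $d=3$.

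Several local points need repair.

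(i) The factor $(1-d/2)^{-1}$ for a $1/2$-net is meaningless for $d\ge2$. The valid factors are $(1-d\varepsilon)^{-1}$ for $\varepsilon<1/d$, or $(1-\varepsilon)^{-d}$. Your $1/(2d)$-net alternative is fine.

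(ii) Your order-one bound $\|\bar X-\mu\|\lesssim\sqrt{p/n}+p^{1/\beta}/n$ is correct for $\beta\le 1$ and for $\beta\ge 2$, but not for $1<\beta<2$. In that range the tail of $\sum_i u^\top(X_i-\mu)$ is only $\exp(-c\min\{t^2/n,\ t^\beta/n^{\beta-1}\})$. This gives $\max\{\sqrt{p/n},(p/n)^{1/\beta}\}$, which is what the paper uses there. Your version actually fails when $p\gg n$: take $X=We_1$ with $\PP(|W|>s)=e^{-s^\beta}$, suitably scaled; the event that all $n$ draws exceed your threshold has probability $e^{-o(p)}$.

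(iii) The closing bookkeeping is not settled by saying every factor is dominated by $\max\{1,\sqrt{p/n},p^{d/\beta}/n\}$. When $p>n$, powers of $\sqrt{p/n}>1$ grow, so you need explicit exponent checks. For $p\ge n$ and $\beta\le 2\le d$, the key ones are:
\begin{itemize}
\item $(p/n)^{j/\beta}\le p^{d/\beta}/n$ for $1\le j\le d$, which uses $\beta\le d$;
\item $\sqrt{p/n}\,(p/n)^{j/\beta}\le p^{d/\beta}/n$ when $d-j\ge 2$, which uses $d-j\ge2\ge\beta$ and $\beta\le d$;
\item $\frac{p^{k/\beta}}{n}\cdot\frac{p^{l/\beta}}{n}\le \frac{p^{(k+l)/\beta}}{n}$.
\end{itemize}
With these and the corrected mean bound, every term is $\lesssim\max\{\sqrt{p/n},p^{d/\beta}/n\}$, and the argument closes.

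(iv) Like the paper's proof, your Bernstein step covers only $\alpha=\beta/d\le 1$, i.e.\ $\beta\le d$. This is not an artifact of the method. For $d<\beta<2d$ and $p\gg n$, the same example $X=We_1$ violates the stated bound with probability $e^{-o(p)}$: all $n$ draws of size about $(p/n)^{1/\beta}$ cost only $e^{-o(p)}$. So the theorem should be read for $\beta\le d$, which covers the sub-Gaussian case with $d\ge 3$ used later in the paper.
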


Theorem~\ref{thm_upper_bound_sample_cumulants} provides a high-probability operator-norm bound for sample moment tensors. In the covariance case $d=2$ and under sub-Gaussian tails $\beta=2$, the bound recovers the classical $\sqrt{p/n}$ rate, consistent with \cite{koltchinskii2017concentration}.\footnote{The effective rank appearing in \cite{koltchinskii2017concentration} is always bounded above by $p$, with equality attained in the worst case.}

A key distinction between covariance estimation and higher-order moment tensor estimation emerges for $d\ge 3$. To compare with the minimax benchmark $\sqrt{p/n}\wedge 1$, consider the estimator
\[
\widehat{\bcT}
=
\begin{cases}
\frac{1}{n}\sum_{i=1}^n X_i^{\otimes d}, & \text{if } n\ge p,\\[0.3em]
0, & \text{if } n<p.
\end{cases}
\]
When $d=2$, the inequality $p/n \lesssim \sqrt{p/n}$ holds whenever $n\ge p$, so this estimator already achieves the minimax-optimal $\sqrt{p/n}$ rate. In contrast, for $d\ge 3$, the additional term $p^{d/\beta}/n$ can dominate $\sqrt{p/n}$ over a wide range of sample sizes. Therefore, sample moment tensors are generally not statistically optimal for higher-order estimation.

For cumulant tensor estimation, we use the plug-in estimator induced by the moment–cumulant relation \eqref{eq_relation_cumulant_moment},
\[
\widehat{\bcK}_d
=
\widehat{\bcM}_d + F_d(\widehat{\bcM}_1,\ldots,\widehat{\bcM}_{d-1}),
\]
where $\widehat{\bcM}_k = n^{-1}\sum_{i=1}^n X_i^{\otimes k}$ denotes the $k$-th order sample moment tensor. We refer to $\widehat{\bcK}_d$ as the \emph{sample cumulant tensor}. Since Theorem~\ref{thm_upper_bound_sample_cumulants} applies to each order $k\le d$, it provides operator-norm bounds for all lower-order sample moment tensors appearing in the plug-in formula. Because $F_d$ is a fixed tensor polynomial in $(\bcM_1,\ldots,\bcM_{d-1})$, with degree depending only on $d$, these bounds together imply that $\|\widehat{\bcK}_d-\bcK_d\|$ obeys the same operator-norm rate, up to constants depending only on $d$.

\subsection{Statistically Optimal Estimator}\label{sec_optimal_estimator}

In the previous subsection, we showed that sample moments and cumulants fail to achieve the minimax-optimal rate for estimating higher-order ($d\ge3$) moment and cumulant tensors. In this section, we construct an estimator whose operator-norm error matches the minimax lower bound established in Theorem~\ref{thm_lower_bound_cumulant_general_d}, up to logarithmic factors.

\paragraph{Step 1.} We begin by estimating a fixed multilinear functional of the moment tensor. Fix unit vectors $u_1,\ldots,u_d \in \mathbb{S}^{p-1}$ and consider the scalar
\[
Y := \big\langle X^{\otimes d}, \bigotimes_{j=1}^d u_j \big\rangle
= \prod_{j=1}^d \langle X,u_j\rangle.
\]
If $X$ is $\beta$-subexponential, then $Y$ is $(\beta/d)$-subexponential. Consequently, when $\beta/d<1$ (e.g., $d\ge3$ for sub-Gaussian vectors), achieving concentration with exponentially small failure probability requires truncation; see Lemma~\ref{lemma_subexpo_concentration}.

\begin{theorem}[Estimation of a multilinear form]\label{thm_rate_op_moment_multilinear_form_estimator}
Let $X_1,\ldots,X_n \stackrel{\mathrm{iid}}{\sim} X \in \mathbb{R}^p$ be a $\beta$-subexponential random vector for some $\beta>0$.
Let $\bcM=\E X^{\otimes d}$ and fix $u_1,\ldots,u_d\in\mathbb{S}^{p-1}$.
For a sufficiently large constant $\zeta>0$, define
\[
\widetilde\bcM(u_1,\ldots,u_d) =
\begin{cases}
\displaystyle \frac1n \sum_{i=1}^n X_i^{\otimes d}, & n > p^{2d/\beta-1}, \\[0.6em]
\displaystyle \frac1n \sum_{i=1}^n \bigotimes_{j=1}^d
\big(X_i I_{\{|\langle X_i,u_j\rangle|\le R\}}\big),
& p < n \le p^{2d/\beta-1}, \\[0.6em]
0, & n\le p,
\end{cases}
\]
where $R = \big(c_{\beta,\zeta}\log(n/p)\big)^{1/\beta}$. Then, with probability at least $1-C\exp(-\zeta p)$,
\[
\big|\langle \widetilde\bcM(u_1,\ldots,u_d)-\bcM,\ \bigotimes_{j=1}^d u_j\rangle\big|
\le C_{\beta,d}^* \big(\sqrt{p/n}\wedge 1\big),
\]
where
\[
C_{\beta,d}^* =
\begin{cases}
C_{\beta,d}, & n > p^{2d/\beta-1},\\
R^d, & p < n \le p^{2d/\beta-1},\\
C_{\beta,d}, & n\le p.
\end{cases}
\]
\end{theorem}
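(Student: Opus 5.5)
We treat the three sample-size regimes of Theorem~\ref{thm_rate_op_moment_multilinear_form_estimator} separately. Throughout, write $Y_i=\prod_{j=1}^d\langle X_i,u_j\rangle$. Since each $\langle X_i,u_j\rangle$ has $\|\cdot\|_{\Phi_\beta}\le 1$, the generalized Hölder inequality for Orlicz norms gives $\|Y_i\|_{\Phi_{\beta/d}}\le C_{\beta,d}$. The target is $\E Y=\langle\bcM,\bigotimes_j u_j\rangle$.

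\textbf{Trivial regime, $n\le p$.} Here $\widetilde\bcM=0$, so the error is $|\E Y|\le \prod_j \|\langle X,u_j\rangle\|_{L^d}\le C_{\beta,d}$. This equals $C_{\beta,d}(\sqrt{p/n}\wedge 1)$, and the bound holds deterministically.

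\textbf{Large-sample regime, $n>p^{2d/\beta-1}$.} The estimator is the plain empirical mean of the $(\beta/d)$-subexponential variables $Y_i$. We invoke the concentration bound for sums of $\alpha$-subexponential variables with $\alpha=\beta/d<1$ (Lemma~\ref{lemma_subexpo_concentration}, or equivalently the Bernstein-type bounds of Götze et al./Kuchibhotla--Chakrabortty). At deviation level $t$ it gives
\[
\Big|\frac1n\sum_i Y_i-\E Y\Big|\le C\Big(\sqrt{t/n}+t^{d/\beta}/n\Big)
\]
with probability at least $1-2e^{-t}$. Set $t=\zeta p$. The first term is $\sqrt{p/n}$. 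The second term is $p^{d/\beta}/n$, and the condition $n>p^{2d/\beta-1}$ is exactly $p^{d/\beta}/n\le\sqrt{p/n}$. So the first term dominates and we get the constant $C_{\beta,d}$. If $2d/\beta-1<1$, this regime overlaps with $n>p$ and the same argument applies.

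\textbf{Truncated regime, $p<n\le p^{2d/\beta-1}$.} Let $Z_i=\prod_j\langle X_i,u_j\rangle I\{|\langle X_i,u_j\rangle|\le R\}$, so $|Z_i|\le R^d$. Decompose the error into a variance term and a bias term.
\begin{itemize}
\item \emph{Variance term.} Bernstein's inequality for bounded variables applies, with $\Var(Z_i)\le \E Y^2\le C_{\beta,d}$ and $|Z_i|\le R^d$. At $t=\zeta p$ it gives $|\bar Z-\E Z|\le C(\sqrt{p/n}+R^dp/n)$. Since $n>p$, we have $p/n\le\sqrt{p/n}$, so this is at most $CR^d\sqrt{p/n}$.
\item \emph{Bias term.} We need $|\E Y-\E Z|=|\E[Y(1-\prod_j I_j)]|$. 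Bound it by Cauchy--Schwarz and a union bound:
\[
\sqrt{\E Y^2}\,\Big(\sum_j\PP(|\langle X,u_j\rangle|>R)\Big)^{1/2}\le C\sqrt d\,e^{-R^\beta/2}.
\]
Choose $R^\beta=c_{\beta,\zeta}\log(n/p)$ with $c_{\beta,\zeta}\ge 1$. Then $e^{-R^\beta/2}\le (p/n)^{1/2}$, so the bias is at most $C\sqrt{p/n}$.
\end{itemize}
Combining the two terms gives $C_{\beta,d}R^d\sqrt{p/n}$. Any constant factor can be absorbed by enlarging $c_{\beta,\zeta}$, since $R\ge$ const when $n/p$ is bounded below; to be safe, we can define $R$ with a $\vee 1$. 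This matches the stated constant $C^*_{\beta,d}=R^d$ up to a $\beta,d$-constant.

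The main obstacle is the large-sample regime. It requires the sharp two-term tail bound for heavy-tailed ($\alpha<1$) sums, in which the $t^{1/\alpha}/n$ term (with no extra log) appears. I would take this from Lemma~\ref{lemma_subexpo_concentration}, which the paper references for exactly this purpose, and check that its second term scales as $(\zeta p)^{d/\beta}/n$. The remaining steps are routine bookkeeping of constants.
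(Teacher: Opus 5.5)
Your proposal is correct and follows essentially the same route as the paper: the same three-regime split, the zero estimator bounded by $|\E Y|\le C_{\beta,d}$ when $n\le p$, and, in the truncated regime, the same bias--variance decomposition, with the bias controlled by Cauchy--Schwarz and a union bound over $\PP(|\langle X,u_j\rangle|>R)$. The differences are cosmetic. For $n>p^{2d/\beta-1}$ the paper cites the operator-norm bound of Theorem~\ref{thm_upper_bound_sample_cumulants} (which itself rests on Lemma~\ref{lemma_subexpo_concentration}), whereas you apply the scalar concentration directly. For the bounded truncated variables the paper uses Hoeffding where you use Bernstein. Your remark that $R$ should be taken $\vee 1$ (so the constant is really $C_{\beta,d}(1\vee R^d)$) correctly flags a looseness near $n/p\approx 1$ that the paper's proof also glosses over.
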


\begin{remark}[Heuristic for the $\sqrt{p/n}$ rate]
Estimating $Y$ is essentially a one-dimensional mean estimation problem. Heuristically, the optimal deviation bound satisfies
\[
\mathbb{P}\big(|\widehat Y-Y|\lesssim \sqrt{\log(1/\delta)/n}\big)\ge 1-\delta.
\]
Taking $\delta=\exp(-\Theta(p))$, as required for the union bound arguments below, yields the $\sqrt{p/n}$ rate in Theorem~\ref{thm_rate_op_moment_multilinear_form_estimator}.
\end{remark}

\paragraph{Step 2.} Next, we use multilinear forms to estimate the target tensor. Since the moment tensor $\bcM=\E X^{\otimes d}$ is symmetric, it suffices to control diagonal forms of the form $\langle \bcM, u^{\otimes d}\rangle$. For $u\in\mathbb{S}^{p-1}$, define
\[
m_u := \langle \bcM, u^{\otimes d}\rangle,
\qquad
\widetilde m_u := \big\langle \widetilde\bcM(u,\ldots,u), u^{\otimes d}\big\rangle.
\]
For each fixed $u\in\mathbb S^{p-1}$, Theorem~\ref{thm_rate_op_moment_multilinear_form_estimator} yields
\[
|m_u-\widetilde m_u| \le \tau := C_{\beta,d}^*(\sqrt{p/n}\wedge 1)
\]
with high probability.

Let $\mathcal{N}$ be an $\varepsilon$-net of $\mathbb{S}^{p-1}$ with $\varepsilon=1/(2d)$ and $|\mathcal{N}|\le \exp(c_dp)$. Define the feasible set
\[
\mathcal{S}
=
\Big\{\bcA\in\mathbb{R}^{p^d}:
|\langle \bcA,u^{\otimes d}\rangle-\widetilde m_u|\le\tau,
\ \forall u\in\mathcal{N}\Big\}.
\]
Applying a union bound over $\mathcal N$, we obtain
\[
    \PP\Big( \sup_{u \in \mathcal N} |m_u - \widetilde m_u| \le \tau \Big) \ge 1 - C_d \exp(-C'_{\beta, d} p),
\]
which implies
\[
\E X^{\otimes d} \in \mathcal{S},
\]
with probability at least $1-C\exp(-c'_{\beta,d}p)$. In particular, $\mathcal{S}$ is nonempty on this event.

For any $\bcA\in\mathcal{S}$ and any $v\in\mathbb{S}^{p-1}$, choose $u\in\mathcal{N}$ such that $\|u-v\|\le\varepsilon$. A standard Lipschitz argument yields
\[
|\langle \bcA-\E X^{\otimes d}, v^{\otimes d}\rangle|
\le
|\langle \bcA-\E X^{\otimes d}, u^{\otimes d}\rangle|
+ d\varepsilon \|\bcA-\E X^{\otimes d}\|.
\]
Combining this with the definition of $\mathcal{S}$ and taking the supremum over $v\in\mathbb{S}^{p-1}$, we obtain
\[
\sup_{v\in\mathbb{S}^{p-1}}
|\langle \bcA-\E X^{\otimes d}, v^{\otimes d}\rangle|
\le
2\tau + \tfrac12\|\bcA-\E X^{\otimes d}\|.
\]
Applying Lemma~\ref{lemma_symmetric_tensor_approximation}, we conclude that
\[
\|\bcA-\E X^{\otimes d}\| \le 4\tau,
\]
which matches the minimax-optimal rate up to the logarithmic factor contained in $C_{\beta,d}^*$.

The construction above defines a feasible set of tensors satisfying the moment constraints on the $\varepsilon$-net, but does not specify a unique estimator. To obtain a canonical choice, we impose an additional convex objective. In particular, minimizing a convex functional over the feasible set $\mathcal{S}$ yields a well-defined convex optimization problem, while preserving the statistical rate established above.

\begin{theorem}[Convex formulation for a statistically optimal estimator]\label{thm_rate_op_cumulant_estimator}
Suppose the conditions in Theorem~\ref{thm_rate_op_moment_multilinear_form_estimator} hold. Let
\[
\widehat\bcM
\in
\argmin_{\bcA\in\mathcal{S}} \|\bcA\|_F^2.
\]
Then, with probability at least $1-C\exp(-c'_{\beta,d}p)$,
\[
\|\widehat\bcM-\E X^{\otimes d}\|
\le
C_{\beta,d}^*(\sqrt{p/n}\wedge 1),
\]
and for $p > C'_{\beta, d}$,
\[
\E \|\widehat\bcM-\E X^{\otimes d}\|
\le
C_{\beta,d}^*(\sqrt{p/n}\wedge 1).
\]

For cumulant estimation, let
\[
\widehat{\bcK}_d
=
\widehat{\bcM}_d + F_d(\widehat{\bcM}_1,\ldots,\widehat{\bcM}_{d-1}),
\]
where $F_d$ is defined as in the moment--cumulant relation \eqref{eq_relation_cumulant_moment}, and let $\bcK_d$ be the $d$-th cumulant tensor. Since $F_d$ is a fixed polynomial in the lower-order moment tensors, the same rate carries over to cumulant estimation. In particular, with probability at least $1-C\exp(-c'_{\beta,d}p)$,
\[
\|\widehat\bcK_d-\bcK_d\|
\le
C_{\beta,d}^*(\sqrt{p/n}\wedge 1),
\]
and for $p > C'_{\beta, d}$,
\[
\E \|\widehat\bcK_d-\bcK_d\|
\le
C_{\beta,d}^*(\sqrt{p/n}\wedge 1).
\]
\end{theorem}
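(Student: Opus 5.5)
The argument has three parts: a high-probability bound for the moment estimator $\widehat\bcM$, a conversion of that bound into one in expectation, and a transfer to cumulants through the moment--cumulant relation \eqref{eq_relation_cumulant_moment}.

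\textbf{Moment tensor, high probability.} Let $E$ be the event $\sup_{u\in\mathcal N}|m_u-\widetilde m_u|\le\tau$. By Theorem~\ref{thm_rate_op_moment_multilinear_form_estimator} and a union bound over $|\mathcal N|\le e^{c_dp}$, with $\zeta$ chosen larger than $c_d$, we get $\PP(E)\ge 1-C\exp(-c'_{\beta,d}p)$. On $E$ we have $\E X^{\otimes d}\in\mathcal S$. Hence $\mathcal S$ is a nonempty, closed, convex polyhedron, and the strictly convex objective $\|\cdot\|_F^2$ has a unique minimizer $\widehat\bcM\in\mathcal S$.

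Since $\widehat\bcM$ need not be symmetric, I would either impose symmetry in $\mathcal S$ or replace $\widehat\bcM$ by its symmetrization. Symmetrizing does not change $\langle\cdot,u^{\otimes d}\rangle$, and the minimum-Frobenius point of a permutation-invariant convex set is symmetric by uniqueness. So $\widehat\bcM$ is symmetric.

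We then apply the Step 2 argument to $\bcA=\widehat\bcM$, which lies in $\mathcal S$, and to $\E X^{\otimes d}$. The triangle inequality gives $|\langle \widehat\bcM-\E X^{\otimes d},u^{\otimes d}\rangle|\le 2\tau$ on $\mathcal N$. The net-Lipschitz step with $\varepsilon=1/(2d)$ extends this to the whole sphere at the cost of $\tfrac12\|\widehat\bcM-\E X^{\otimes d}\|$. Lemma~\ref{lemma_symmetric_tensor_approximation}, which passes from diagonal forms to the full spectral norm for symmetric tensors, then gives $\|\widehat\bcM-\E X^{\otimes d}\|\le 4\tau=C^*_{\beta,d}(\sqrt{p/n}\wedge1)$.

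\textbf{Moment tensor, expectation.} This is the step I expect to be the main obstacle, because off $E$ the set $\mathcal S$ may be empty or $\widehat\bcM$ may be large. I would define $\widehat\bcM=0$ whenever $\mathcal S=\emptyset$. If $\mathcal S\neq\emptyset$, comparison with a feasible point shows $\|\widehat\bcM\|$ is controlled by $\max_{u\in\mathcal N}|\widetilde m_u|+\tau$, up to a factor polynomial in $p$ coming from Frobenius versus spectral norms.

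There are two cases. In the truncated and zero regimes, $|\widetilde m_u|\le R^d$ deterministically. In the sample-mean regime, $\E\max_u|\widetilde m_u|^2\lesssim\E\|X\|^{2d}\le C p^{d}$. Cauchy--Schwarz then bounds the contribution off $E$ by $\mathrm{poly}(p)\,R^d\,e^{-cp/2}$. For $p>C'_{\beta,d}$ this is at most $C^*_{\beta,d}\sqrt{p/n}$ when $n\le p^{2d/\beta-1}$. In the sample-mean regime, $n$ is polynomial in $p$, or else the failure probability from Theorem~\ref{thm_rate_op_moment_multilinear_form_estimator} can be sharpened, so the term is again absorbed. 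If this becomes delicate, I would simply truncate the estimator by projecting to $\{\|\bcA\|\le C\}$; this is harmless because $\|\E X^{\otimes d}\|\le C_{\beta,d}$.

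\textbf{Cumulants.} Apply the above to each order $k\le d$ to get $\|\widehat\bcM_k-\bcM_k\|\le C\tau_k$, with $\tau_k\lesssim\tau$ and all $\|\bcM_k\|\le C_{\beta,d}$. A union bound over $k$ keeps the probability at least $1-C\exp(-c'p)$.

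$F_d$ is a sum of partition products $\bigotimes_{B\in\pi}\bcM_{|B|}$ with indices permuted. The spectral norm is invariant under index permutation and submultiplicative under outer products: $\|\bcA\otimes\bcB\|\le\|\bcA\|\|\bcB\|$. A telescoping expansion of each product therefore gives $\|F_d(\widehat\bcM)-F_d(\bcM)\|\le C_d\tau$ on the event where all $\|\widehat\bcM_k\|\le 2C$, which holds since $\tau\lesssim1$. The expectation bound follows as before, using the same boundedness safeguard to control the polynomial off the good event.
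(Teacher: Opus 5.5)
\textbf{Overall.} Your high-probability argument and the transfer to cumulants follow the paper's route: the feasible set on the net, the Lipschitz step, Lemma~\ref{lemma_symmetric_tensor_approximation}, then the fixed polynomial $F_d$. Your symmetry remark is needed and is missing from the paper. Section~\ref{sec_optimal_estimator} applies the lemma to an arbitrary $\bcA\in\mathcal S$, which is false, because $\mathcal S$ is invariant under adding any tensor whose symmetrization vanishes, and such tensors can have arbitrarily large spectral norm. It is the uniqueness of the minimum-Frobenius point over the permutation-invariant set $\mathcal S$ that makes $\widehat\bcM$ symmetric and the $4\tau$ bound valid.

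\textbf{The gap: the expectation bound in the sample-mean regime.} In the regime $n>p^{2d/\beta-1}$, $n$ is unbounded relative to $p$. Your off-event contribution is $\mathrm{poly}(p)e^{-cp/2}$. After projecting onto $\{\|\bcA\|\le C\}$, the best you get is $C\,\PP(E^c)$, of order $e^{-cp}$. Neither is $\lesssim\sqrt{p/n}$ once $n$ exceeds $\mathrm{poly}(p)e^{cp}$, and the theorem does not assume $n$ is polynomial in $p$.

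The failure probability cannot be sharpened either. Since $\tau\asymp\sqrt{p/n}$ shrinks at the same rate as the fluctuations of $\widetilde m_u-m_u$, $\PP(E^c)$ does not decay in $n$ for fixed $p$. So any argument of the form ``deterministic bound on the error times $\PP(E^c)$'' fails here. The paper's one-line justification, that $\|\widehat\bcM\|$ and $\|\E X^{\otimes d}\|$ are bounded, has the same defect; in this regime $\|\widehat\bcM\|$ is not even deterministically bounded.

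\textbf{How to fix it.} You need to control the error off $E$ by a random quantity whose expectation scales with $n$. In this regime $\widetilde m_u=\langle\bar{\bcM},u^{\otimes d}\rangle$ with $\bar{\bcM}:=\frac1n\sum_{i}X_i^{\otimes d}$. Hence $\bar{\bcM}\in\mathcal S$ always, so in particular $\mathcal S\neq\emptyset$. The net argument applied to the symmetric tensor $\widehat\bcM-\bar{\bcM}$ then gives, deterministically, $\|\widehat\bcM-\bar{\bcM}\|\le 2\tau$. Therefore $\E\|\widehat\bcM-\E X^{\otimes d}\|\le 2\tau+\E\|\bar{\bcM}-\E X^{\otimes d}\|$. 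Integrating the tail bound $2\exp(C_dp-c\min\{nt^2,(nt)^{\beta/d}\})$ from the proof of Theorem~\ref{thm_upper_bound_sample_cumulants} yields $\E\|\bar{\bcM}-\E X^{\otimes d}\|\lesssim\sqrt{p/n}+p^{d/\beta}/n\lesssim\sqrt{p/n}$.

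In the truncated and zero regimes your argument does go through. There $n\le p^{2d/\beta-1}$, and for any symmetric element of $\mathcal S$ the same net argument gives $\|\widehat\bcM\|\le 2(\max_{u\in\mathcal N}|\widetilde m_u|+\tau)\le 2(R^d+\tau)$ directly, with no polynomial-in-$p$ loss.

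The cumulant expectation needs the same repair. Bound $\|\widehat\bcK_d-\bcK_d\|$ by a polynomial in the errors $\|\widehat\bcM_k-\bcM_k\|$, $k\le d$. Then control its mean through these integrated tails and H\"older's inequality, rather than through a boundedness safeguard.
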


\subsection{Optimal Detection}\label{sec_statistical_detection}

We now turn from estimation to detection. In general, consistent estimation implies statistical detectability; see, for example, page 80 of \cite{tsybakov2009introduction} and page 95 of \cite{rigollet2023high}. Indeed, if an estimator $\hat\mu$ consistently estimates a scalar parameter $\mu$, then the test
\[
H_0:\ \mu=0
\qquad\text{vs.}\qquad
H_1:\ |\mu|>c
\]
can be carried out using the rule $\xi=\mathbb I_{\{|\hat\mu|>c/2\}}$. Consistency of $\hat\mu$ guarantees that both the type I and type II errors vanish asymptotically.

Applying this principle to our setting, the statistical upper bounds for estimation immediately imply detectability whenever $n\gg p$. In particular, the testing problem
\beas
H_0:\ \bcT_d = 0
\qquad\text{vs.}\qquad
H_1:\ \|\bcT_d\| \ge \kappa
\eeas
is statistically solvable (though it may be computationally intensive) in the regime $n\gg p$, where $\bcT_d$ denotes either the $d$-th moment tensor or the $d$-th cumulant tensor. Combined with the lower bounds in Theorems~\ref{thm_lower_bound_cumulant_general_d} and \ref{thm_lower_bound_detection}, this establishes that, up to constants, detection and consistent estimation have the same statistical threshold in our problem. 

\section{Computational Limit}\label{sec_computational_limit}

In the previous section, we characterized the statistical limits of estimating $d$-th order moment and cumulant tensors. In particular, for $\beta$-subexponential random vectors, we showed that the minimax-optimal estimation error in tensor spectral norm scales as $\sqrt{p/n}\wedge 1$. Achieving this optimal rate, however, requires solving a convex optimization problem with exponentially many constraints, which renders the resulting estimator computationally infeasible in high-dimensional settings. In the following sections, we investigate the computational limits of moment and cumulant tensor detection and estimation. 

\subsection{Low-degree Polynomial Framework}\label{sec_ldp_framework}

To study computational complexity, we adopt the low-degree polynomial (LDP) framework. Originating from the analysis of the sum-of-squares (SoS) hierarchy \citep{hopkins2017power, hopkins2017bayesian, barak2019nearly}, the LDP framework has become a central tool for characterizing computational barriers in high-dimensional statistics. It has been successfully applied to a wide range of problems, including planted clique detection \citep{barak2019nearly}, tensor PCA \citep{hopkins2018statistical}, tensor regression \citep{luo2024tensor}, among many others.

Consider a hypothesis testing problem with null distribution $\mathbb Q$ and alternative distribution $\mathbb P$. Under standard regularity conditions, the likelihood ratio test is asymptotically optimal by the Neyman--Pearson lemma. The LDP framework studies the projection of the likelihood ratio $L:= \frac{d\mathbb P}{d\mathbb Q}$ onto the space of polynomials of degree at most $D$. Specifically, it considers the norm
\[
    \|L_{\leq D}\| := \| \mathcal{P}_{\leq D} L\|, 
\]
where $\mathcal P_{\le D}$ denotes the orthogonal projection onto degree-$D$ polynomials and the norm is defined by $\|f\| := (\E_{X\sim \mathbb Q}[f(X)^2])^{1/2}$. 

Intuitively, $L_{\le D}$ captures the optimal degree-$D$ polynomial statistic for distinguishing $\mathbb P$ from $\mathbb Q$, and the quantity $\|L_{\le D}\|$ measures the extent to which low-degree polynomials can separate the two distributions. When $\|L_{\le D}\|$ remains asymptotically bounded as the dimension grows, no degree-$D$ polynomial test can reliably distinguish $\mathbb P$ from $\mathbb Q$.

A broad class of algorithmic paradigms---including spectral methods, approximate message passing, statistical query algorithms, and other polynomial-time procedures---have been shown to be captured by low-degree polynomials. Consequently, the LDP framework provides a rigorous characterization of the computational limits of these algorithmic classes \citep{wein2025computational}. 
Recent counterexamples show that bounded low-degree likelihood ratio norms do not, by themselves, rule out all efficient algorithms in complete generality \citep{buhai2025quasi}. Nevertheless, the low-degree framework continues to serve as a robust organizing principle for understanding statistical-computational tradeoffs across a broad range of structured inference problems \citep{holmgren2020counterexamples,buhai2025quasi,wein2025computational}.
We refer the interested reader to the surveys \citep{kunisky2019notes, wein2025computational} for comprehensive treatments of the LDP framework and its implications.

\subsection{Detection Lower Bound}\label{sec_hypothesis_test}

In this subsection, we apply the LDP framework described in Section \ref{sec_ldp_framework} to study the computational hardness of the following detection problem: given i.i.d. samples from a random vector, determine whether its $d$-th order cumulant tensor is zero. 

To this end, we construct a family of distributions whose lower-order cumulants match those of a Gaussian distribution, while exhibiting a nontrivial $d$-th order cumulant. This construction ensures that any algorithm must exploit genuinely high-order information to distinguish the alternative from the null. Specifically, let 
\[
    X_i = \sqrt{a/p} W_i U + Z_i,\quad  i\in[n],  
\]
where $Z_i \overset{iid}{\sim} \mathcal N(0, I_p)$, $W_i \in \mathbb R$ are i.i.d. random variables drawn from the distribution constructed in Lemma \ref{lemma_existance_1d}, $U\in\mathbb R^p$ is a fixed vector satisfying $\|U\|=\sqrt p$ (sampled from some prior), and $a>0$ is a constant to be specified later.
Conditioning on $U$, the cumulants of $X_i$ satisfy
\[
    \bcK_1(X) = 0;\; \bcK_2(X) = I_p + \frac{a}{p} UU\T;\; 
\]
\[
    \bcK_m(X) = 0,\ \text{for $m\in[3,d-1]$};\; \bcK_d(X) = c_d\(\frac{a}{p}\)^{d/2} U^{\otimes d}, 
\]
where $c_d$ is a constant depending only on $d$. Let 
\[
    S = I_{p} - \frac{a}{1+a+\sqrt{1+a}} \frac{UU\T}{p}
\]
be the whitening matrix for $\bcK_2(X)$. Conditioning on $U$, the first $d-1$ cumulants of the transformed random vector $SX$ coincide with those of a standard Gaussian distribution, while
\[
    \bcK_d(SX) = c_d\(\frac{a}{p}\)^{d/2} (SU)^{\otimes d}. 
\]
A direct calculation shows that 
\[
    \|SU\|^2 = \frac{p}{1+a},
\]
and therefore
\[
    \|\bcK_d(SX)\| = c_d \( \frac{a\|SU\|^2}{p}\)^{d/2} = c_d \( \frac{a}{1+a}\)^{d/2}. 
\]
Thus, for any fixed constant $a>0$ and $\|U\|=\sqrt p$, the spectral norm of the order-$d$ cumulant of $SX$ is bounded below by a positive constant $C_d>0$ depending only on $d$.

We now formulate the following hypothesis testing problem based on $n$ i.i.d. samples: 
\begin{align}
    H_0: Y_i \overset{d}{\sim} Z_i, \quad
    \text{vs. }\quad
    H_1: Y_i \overset{d}{\sim} SX_i.     \label{eq_H1H0}
\end{align}
The low-degree likelihood ratio for this testing problem is controlled by the following lemma.
\begin{lemma}[{LDP norm upper bound}]\label{lemma_ldp} 
$\|L_{\leq D}\|^2 \leq \sum_{0\leq m \leq D} \(\frac{a}{1+a} \frac{C_d m^4 n^{1/d}}{p^{1/2}}\)^{m}. $
\end{lemma}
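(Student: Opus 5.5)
We bound $\|L_{\le D}\|^2$ with the standard Hermite expansion. Under $H_0$ the data $Y=(Y_1,\ldots,Y_n)$ are i.i.d.\ $\mathcal N(0,I_p)$, so multivariate Hermite polynomials $h_\alpha(Y)=\prod_{i}\prod_j h_{\alpha_{ij}}(Y_{ij})$, with $\alpha\in\mathbb N^{n\times p}$, form an orthonormal basis. This gives
\[
\|L_{\le D}\|^2=\sum_{|\alpha|\le D}\big(\E_{H_1} h_\alpha(Y)\big)^2 .
\]
We condition on $U$, writing $\E_{H_1}h_\alpha=\E_U\prod_i \E[h_{\alpha_i}(SX_i)\mid U]$. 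Taking two independent copies $U,U'$ of the prior then turns the sum into $\E_{U,U'}\sum_{|\alpha|\le D}\prod_i \Phi_{\alpha_i}(U)\Phi_{\alpha_i}(U')$, where $\Phi_{\beta}(U):=\E[h_\beta(SX)\mid U]$.

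The first key step is a one-sample computation of $\Phi_\beta(U)$. Conditioning on $U$, $SX$ is the projection of a 1-d variable onto the direction $v:=SU/\|SU\|$ plus an independent Gaussian on $v^\perp$. Indeed, $SX=\sqrt{a/p}\,W\,SU+SZ$, and the whitening gives $\mathrm{Cov}(SX)=I$. Writing $SX=\gamma v+P_{v^\perp}G$ with $\gamma:=\sqrt{a/(1+a)}\,W+\sqrt{1/(1+a)}\,g$ and $g\sim\mathcal N(0,1)$, the Hermite rotation formula gives
\[
\Phi_\beta(U)=\sqrt{\tbinom{|\beta|}{\beta}}\;v^\beta\;\E h_{|\beta|}(\gamma),
\]
using that $\E h_\beta(\gamma v+P_{v^\perp}G)=\E h_{|\beta|}(\gamma)\,v^\beta\sqrt{\binom{|\beta|}{\beta}}$ for a standard-Gaussian-complement construction. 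By Lemma~\ref{lemma_existance_1d}, $W$ matches $\mathcal N(0,1)$ moments up to order $d-1$, so $\E h_k(\gamma)=0$ for $1\le k\le d-1$. For $k\ge d$ we have $\E h_k(\gamma)=(a/(1+a))^{k/2}\E h_k(W)$, and the sub-Gaussian/bounded nature of $W$ bounds $|\E h_k(W)|\le (Ck)^{k/2}/\sqrt{k!}$ or similar.

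Next we sum over $\alpha$. Each row $\alpha_i$ is either zero or has $|\alpha_i|=k_i\ge d$. Summing over rows with fixed $k_i$ and using $\sum_{|\beta|=k}\binom{k}{\beta}v^\beta v'^\beta=\langle v,v'\rangle^k$ yields
\[
\|L_{\le D}\|^2=\E_{U,U'}\sum_{\substack{k_1+\dots+k_n\le D\\ k_i\in\{0\}\cup[d,\infty)}}\prod_i\Big(\tfrac{a}{1+a}\Big)^{k_i}(\E h_{k_i}W)^2\langle v,v'\rangle^{k_i}.
\]
We group the terms by total degree $m=\sum k_i$. The number of nonzero rows is $r\le m/d$, and there are at most $n^{r}\le n^{m/d}$ ways to choose them. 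The multinomial count of the $k_i$ is absorbed into $m^{O(m)}$.

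The main obstacle is controlling $\E_{U,U'}\langle v,v'\rangle^{m}$. We take the prior $U$ uniform on $\{\pm1\}^p$, so that $\|U\|=\sqrt p$ and $v\propto U-cU\,(\ldots)$ is parallel to $U$, giving $v=U/\sqrt p$. Then $\langle v,v'\rangle=p^{-1}\sum_j U_jU'_j$, and Khintchine/moment bounds give $\E|\langle v,v'\rangle|^m\le (Cm/p)^{m/2}$ for even $m$; odd terms are bounded by absolute values. Combining, the degree-$m$ contribution is at most
\[
\Big(\tfrac{a}{1+a}\Big)^m n^{m/d}(Cm)^{m/2}p^{-m/2}\,m^{O(m)}\le\Big(\tfrac{a}{1+a}\tfrac{C_d m^4 n^{1/d}}{p^{1/2}}\Big)^m .
\]
Much of the work is bookkeeping: collecting the polynomial-in-$m$ losses (Hermite moment growth, partition counting, Khintchine) into the $m^4$ factor, and checking that the Hermite moments of $W$ from Lemma~\ref{lemma_existance_1d} grow at most like $m^{O(m)}$. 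Summing over $0\le m\le D$ then gives the stated bound of Lemma~\ref{lemma_ldp}.
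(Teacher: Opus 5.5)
Your argument is correct and shares the paper's skeleton. Both use the orthogonal Hermite expansion under the Gaussian null and a Rademacher prior, so that $SU\parallel U$ and $v=U/\sqrt p$. Both use the one-sample identity $\E H_\beta(\gamma v+P_{v^\perp}G)=v^\beta\,\E H_{|\beta|}(\gamma)$ with $\E H_k(\gamma)=(a/(1+a))^{k/2}\E H_k(W)$; the paper imports this from Lemma B.11 of \cite{szekely_learning_2024}. Both then use moment matching of $W$ to kill rows with $1\le|\alpha^{(i)}|\le d-1$.

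Where you differ is in how the sum over $\alpha$ is organized. The paper evaluates $(\E_U U^\alpha)^2$ as the indicator that every column sum of $\alpha$ is even, and throws away the weights $1/\alpha!$. It then counts the resulting index set $\mathcal A_m$ directly (at most $m/d$ nonzero rows and $m/2$ nonzero columns), obtaining $|\mathcal A_m|\le n^{m/d}p^{m/2}(m^2/8)^m$. Its $m^4$ is $m^2$ from this count times $m^2$ from $\sup_{k\le m}(\E H_k(W))^{2m/k}\le C_d^{2m}m^{2m}$.

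You instead pass to two replicas $U,U'$ and use $\sum_{|\beta|=k}\binom{k}{\beta}v^\beta v'^\beta=\langle v,v'\rangle^k$ to collapse all column structure into the overlap moment $\E\langle v,v'\rangle^m$. Only row patterns are then counted, and the factor $p^{-m/2}$ comes from Khintchine rather than from a column count. This keeps the multinomial weights and gives a sharper bound. The $m$-dependent losses are only $m\,2^m$ from compositions, $C^m$ from the Hermite moments of $W$, and $(m/p)^{m/2}$, i.e.\ effectively $m^{1/2}$ in place of $m^4$. It also applies unchanged to any prior on $\{\|U\|=\sqrt p\}$ whose normalized overlap has moments of order $(Cm/p)^{m/2}$. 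The paper's count, by contrast, relies on the even-column-sum structure specific to Rademacher entries.

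The only thing to tighten is the phrase ``absorbed into $m^{O(m)}$'': by itself it does not certify an exponent below $4$. However, the explicit bounds you cite land well inside $m^{4m}$. So does the paper's cruder $|\E H_k(W)|\le k!\,C_d^k$, which costs at most $\prod_i k_i!\le m^m$ in your framework.
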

As a consequence, we have the following computational lower bound. 
\begin{theorem}[Low-degree hardness for detecting a nonzero $d$-th order cumulant]
\label{thm:ldp-hardness-detection}
Fix $d \ge 3$ and consider the testing problem \eqref{eq_H1H0}. 
Then there exists a constant $c_d>0$, depending only on $d$, such that for any $\varepsilon>0$, if
\[
p \ge n^{2/d+\varepsilon},
\qquad
D \le c_d\, n^{\varepsilon/8},
\]
then the degree-$D$ low-degree likelihood ratio satisfies
\[
\|L_{\le D}\| = O(1).
\]
Consequently, there is no sequence of degree-$D$ polynomial tests whose sum of Type~I and Type~II errors converges to zero.
\end{theorem}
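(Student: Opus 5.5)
The plan is to derive the theorem directly from Lemma~\ref{lemma_ldp}, which bounds the squared LDP norm by the series
\[
\sum_{0\le m\le D} x_m^m,
\qquad
x_m := \frac{a}{1+a}\,\frac{C_d m^4 n^{1/d}}{p^{1/2}} .
\]
It then suffices to show that, under the stated conditions on $p$ and $D$, we have $x_m\le 1/2$ uniformly for $m\le D$. Once this holds, the sum is dominated by a geometric series, $\|L_{\le D}\|^2\le \sum_{m\ge0}2^{-m}=2$, and therefore $\|L_{\le D}\|=O(1)$.

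The key step is the uniform bound on $x_m$. Since $a/(1+a)<1$ and $m\le D$, we have $x_m\le C_d D^4 n^{1/d}p^{-1/2}$. The hypothesis $p\ge n^{2/d+\varepsilon}$ gives $p^{1/2}\ge n^{1/d+\varepsilon/2}$, hence
\[
x_m\le C_d D^4 n^{-\varepsilon/2}.
\]
With $D\le c_d n^{\varepsilon/8}$ we get $D^4\le c_d^4 n^{\varepsilon/2}$, so $x_m\le C_d c_d^4$. Choosing $c_d=(2C_d)^{-1/4}$ makes this at most $1/2$, and this choice depends only on $d$, as required. The exponent $\varepsilon/8$ is exactly what cancels the $m^4$ factor against the $n^{-\varepsilon/2}$ gain. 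The $m=0$ term equals $1$ (with the convention $0^0=1$) and is harmless.

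For the consequence about tests, I would invoke the standard low-degree interpretation recalled in Section~\ref{sec_ldp_framework}. If the total error of some degree-$D$ polynomial tests (thresholded polynomials) tended to zero, one would obtain a degree-$D$ polynomial $f$ with $\E_{\mathbb P} f\to\infty$ while $\E_{\mathbb Q} f^2=1$, in the strong-separation sense. This contradicts the variational identity
\[
\|L_{\le D}\|=\sup_{\deg f\le D}\frac{\E_{\mathbb P} f}{\sqrt{\E_{\mathbb Q} f^2}} .
\]
Boundedness of $\|L_{\le D}\|$ thus rules out strong separation, and hence successful degree-$D$ tests in the framework's sense.

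I expect no real obstacle, since all of the analytic difficulty is contained in Lemma~\ref{lemma_ldp}. The only delicate point is the bookkeeping in the uniform bound: it must hold for every $m\le D$ and not just for fixed $m$, and the constants must be chosen independently of $n$, $p$ and $a$. Both are handled by the monotone bound $m\le D$ and the factor $a/(1+a)\le 1$.
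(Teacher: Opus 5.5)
Your proof is correct and follows the paper's route. The paper presents the theorem as an immediate consequence of Lemma~\ref{lemma_ldp}, and your bookkeeping is exactly what that deduction needs: $m\le D\le c_d n^{\varepsilon/8}$ and $p^{1/2}\ge n^{1/d+\varepsilon/2}$ make each ratio at most $C_d c_d^4\le 1/2$, so $\|L_{\le D}\|^2\le 2$. One caution on the last step: a thresholded polynomial test with vanishing error does not by itself force its underlying polynomial to strongly separate $\mathbb P$ from $\mathbb Q$. So what you rigorously get is that $\|L_{\le D}\|=O(1)$ rules out degree-$D$ strong separation; the statement about tests is the standard low-degree interpretation, which the paper also invokes without proof.
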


\subsection{A ``Reverse'' Detection--Estimation Gap}\label{sec_rev_gap}

A useful distinction in this area is between \emph{detection} and \emph{estimation} (or recovery). 
In many planted or signal-plus-noise models, these two tasks can exhibit different computational thresholds. 
Recent work in the low-degree framework has made this distinction precise. 
In particular, \cite{schramm2022computational} developed general low-degree lower bounds for estimation and recovery, and showed that for problems such as planted submatrix and planted dense subgraph, estimation can be computationally hard even when detection is easy. \cite{luoComputationalLowerBounds2024} identified a setting in which subtensor detection is easier than estimation. See also \cite{mao2023detection} for an explicit detection--estimation gap in planted dense cycles, and \cite{bresler2023detection} for reduction-based evidence of such a gap in planted dense subgraph. 
By contrast, in some models the two thresholds coincide; for example, in the spiked Wigner model, detection and estimation undergo the same phase transition \citep{el2020fundamental}. 

In the above examples, estimation is shown to be harder than detection, which is consistent with the intuition that recovering a signal should require more information than merely detecting its presence. 
The phenomenon described below goes in the \emph{opposite} direction. 
We identify a regime in which there exists a computationally efficient estimator whose error is asymptotically smaller than the separation at which low-degree detection appears possible. 
In this sense, computationally efficient detection becomes strictly harder than computationally efficient estimation.

Now consider the regime $p<n<p^{d-1}$. 
In Lemma \ref{lemma_ldp}, if we choose
\[
    a = \frac{p^{1/2}}{n^{1/d}(\log(n))^8},
\]
then $\|L_{\le D}\|^2 = O_n(1)$ with $D \asymp (\log(n))^2$. 
Meanwhile,
\[
    \|\bcK_d(SX)\|
    = c_d \left(\frac{a}{1+a}\right)^{d/2}
    \gtrsim \sqrt{\frac{p^{d/2}}{n(\log(n))^{8d}}}
    := D_{\operatorname{det}}.
\]
Thus low-degree tests fail to distinguish $H_0$ and $H_1$ even when the alternative is separated from the null by at least $D_{\operatorname{det}}$ in tensor spectral norm. 
Under the usual low-degree conjecture, this provides evidence that no polynomial-time test can distinguish $H_0$ and $H_1$ at that scale.

On the other hand, Theorem \ref{thm_upper_bound_sample_cumulants} shows that the empirical cumulant tensor satisfies the error bound
\[
    \|\widehat{\bcK}_d-\bcK_d\| \lesssim \frac{p^{d/2}}{n}
    := D_{\operatorname{est}}
\]
whenever $n<p^{d-1}$. 
In particular, if $n = p^{d/2+\varepsilon}$ for some $\varepsilon \in (0, d/2 - 1)$, then
\[
    D_{\operatorname{est}} \ll D_{\operatorname{det}}.
\]
Therefore, there exists a computationally efficient estimator whose estimation error is much smaller than the null--alternative separation required for computationally efficient detection. 
Consequently, in this regime (where both $D_{\operatorname{est}}$ and $D_{\operatorname{det}}$ vanish asymptotically), computationally efficient detection is strictly harder than computationally efficient estimation.

This may initially appear counterintuitive. The key point is that we are comparing \emph{computationally efficient} procedures. 
Although the estimator $\widehat{\bcK}_d$ can be computed efficiently, converting it into an efficient detector is not straightforward. 
A natural decision rule would require certifying whether $\|\widehat{\bcK}_d\|$ exceeds a threshold, or more generally whether $\widehat{\bcK}_d$ is close to the true tensor in tensor spectral norm. 
However, these certification steps involve tensor spectral norms, whose exact computation is NP-hard in general \citep{hillarMostTensorProblems2013a}. 
Thus an efficiently computable estimator does not automatically yield an efficiently computable test. 
See Section \ref{sec_open_problem} for further discussion.

Finally, if computational constraints are removed, estimation remains at least as hard as detection, as discussed in Section \ref{sec_statistical_detection}: any estimator with sufficiently small error can be converted into a test (see, for example, page 80 of \cite{tsybakov2009introduction} and page 95 of \cite{rigollet2023high}). 
The point here is precisely that such a reduction need not preserve computational efficiency.

\subsection{Detection Methods}

\paragraph{Moment tensor.}
Next, we consider a computationally efficient test for the moment tensor detection problem
\be\label{eq_test}
H_0:\ \bcM_d = \E[X^{\otimes d}] = 0
\qquad\text{vs.}\qquad
H_1:\ \|\bcM_d\| \ge \kappa,
\ee
based on i.i.d.\ samples $X_1,\ldots,X_n$, where $\kappa>0$ is a fixed constant. Our goal is to construct a test statistic that is computable in polynomial time and achieves consistency when $n\gg p^{d/2}$. We restrict attention to odd orders $d$, since for even $d$, the condition $\bcM_d=0$ implies $X=0$, and the testing problem becomes trivial.

To make the test computationally efficient, we need to avoid evaluating the tensor spectral norm. Note that
\[
\|\bcM_d\|_F \ge \|\bcM_d\|.
\]
Therefore, any procedure that detects whether $\|\bcM_d\|_F$ is bounded away from zero can also distinguish $H_1$ from $H_0$. This suggests targeting the Frobenius norm instead.

However, if one first estimates the full tensor $\bcM_d$ and then computes its Frobenius norm, the resulting estimation error is of order $\sqrt{p^d/n}$, which yields consistency only when $n\gg p^d$ rather than when $n\gg p^{d/2}$. The key idea is therefore not to estimate $\bcM_d$ itself and then take its Frobenius norm, but instead to estimate $\|\bcM_d\|_F$ directly.

Indeed,
\begin{align*}
\E\big[(X_1^\top X_2)^d\big]
&=
\E\Big[\Big(\sum_{j=1}^p (X_{1})_j (X_{2})_j\Big)^d\Big] \\
&=
\sum_{i_1,\ldots,i_d\in[p]}
\E\big[(X_1)_{i_1}\cdots (X_1)_{i_d}\big]^2
=
\|\bcM_d\|_F^2.
\end{align*}
Motivated by this identity, define the test statistic
\be\label{eq_Un}
    V_n:=\frac{1}{n(n-1)}\sum_{i\neq j}(X_i^\top X_j)^d.
\ee
Then $\E V_n = \|\bcM_d\|_F^2$. Moreover, this statistic is computationally efficient, with total arithmetic cost $O(n^2p)$. This leads to the following polynomial-time consistent detection procedure.

\begin{theorem}[Polynomial-time consistent test for vanishing $d$-th moment]\label{thm_poly_test_moment_corrected}
Suppose $d$ is odd. Let $X_1,\ldots,X_n$ be i.i.d.\ copies of a random vector $X\in\R^p$ with sub-Gaussian norm $K$ for some constant $K>0$. 
Consider the hypothesis test \eqref{eq_test} with test statistic $V_n$ defined in \eqref{eq_Un}. 
Assume in addition that every alternative under $H_1$ satisfies
\begin{equation}\label{eq_zeta1_assumption_new}
\Var \big(\langle \bcM_d, X^{\otimes d}\rangle\big)
\le C_1 p^{d/2},
\end{equation}
for some constant $C_1>0$. Let
\[
\tau_n=A\frac{p^{d/2}\log n}{n},
\]
where $A>0$ is a sufficiently large constant, and define the decision rule
\[
\phi_n:=\mathbf 1_{\{V_n>\tau_n\}}.
\]
Then for fixed $d$:
\begin{enumerate}
    \item $\phi_n$ is computable in $O(n^2p)$ arithmetic operations.
    \item Under $H_0$,
    \[
    \PP_{H_0}(\phi_n=1)\to 0
    \qquad\text{whenever}\qquad
    \frac{p^{d/2}\log n}{n}\to 0.
    \]
    \item Under $H_1$,
    \[
    \PP_{H_1}(\phi_n=0)\to 0
    \qquad\text{whenever}\qquad
    \frac{p^{d/2}\log n}{n}\to 0.
    \]
\end{enumerate}
In particular, under \eqref{eq_zeta1_assumption_new}, the test is polynomial-time computable and consistent whenever
\[
n\gg p^{d/2}\log n.
\]
\end{theorem}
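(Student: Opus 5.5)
Computability is immediate: each inner product $X_i^\top X_j$ costs $O(p)$, raising it to the $d$-th power costs $O(1)$ for fixed $d$, and there are $O(n^2)$ pairs. The statistical part goes through the Hoeffding decomposition of the U-statistic $V_n$ with kernel $h(x,y)=(x^\top y)^d=\langle x^{\otimes d},y^{\otimes d}\rangle$. Writing $\theta=\E V_n=\|\bcM_d\|_F^2$, we have
\[
V_n-\theta=\frac{2}{n}\sum_{i}\big(h_1(X_i)-\theta\big)+\frac{1}{n(n-1)}\sum_{i\ne j}\tilde h(X_i,X_j),
\]
where $h_1(x)=\E h(x,X)=\langle \bcM_d,x^{\otimes d}\rangle$ and $\tilde h$ is the degenerate (completely centered) remainder. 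We bound both pieces by Chebyshev, so we need $\Var(V_n)\lesssim \frac{4}{n}\Var(h_1(X))+\frac{2}{n(n-1)}\E h(X_1,X_2)^2$.

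The key moment bound is $\E h(X_1,X_2)^2=\E (X_1^\top X_2)^{2d}\le C_{d,K}p^d$. To prove it, condition on $X_2$: since $X_1$ is sub-Gaussian with norm $K$, $X_1^\top X_2$ is sub-Gaussian with parameter $K\|X_2\|$, so $\E[(X_1^\top X_2)^{2d}\mid X_2]\le C_d K^{2d}\|X_2\|^{2d}$. Next, $\E\|X_2\|^{2d}\le C_{d,K}p^d$, because $\|X\|^2=\sum_j X_j^2$ has $d$-th moment at most $p^{d}\max_j\E X_j^{2d}$ by Jensen/Hölder.

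Under $H_0$, $\bcM_d=0$, so $h_1\equiv 0$ and $\theta=0$. Hence $\Var(V_n)=\frac{2}{n(n-1)}\E h^2\le C p^d/n^2$. By Chebyshev,
\[
\PP(V_n>\tau_n)\le \frac{Cp^d/n^2}{A^2p^{d}(\log n)^2/n^2}=O\big((\log n)^{-2}\big)\to0 .
\]
The $\log n$ factor in $\tau_n$ is exactly what makes this vanish. The hypothesis $p^{d/2}\log n/n\to0$ is not even needed here; it enters under $H_1$.

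Under $H_1$, $\theta\ge\|\bcM_d\|^2\ge\kappa^2$, while $\tau_n\to0$, so eventually $\tau_n\le\theta/2$ and $\PP(V_n\le\tau_n)\le\PP(|V_n-\theta|\ge\theta/2)\le 4\Var(V_n)/\theta^2$. The degenerate part contributes $O(p^d/n^2)$ as before. The main obstacle is the linear term: $\Var(h_1(X))$ could be as large as order $\|\bcM_d\|_F^2p^{d/2}$, which does not obviously vanish after division by $n\theta^2$. Assumption \eqref{eq_zeta1_assumption_new} is designed to resolve this, giving $\frac{4}{n}\Var(h_1)\le 4C_1p^{d/2}/n\to0$. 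The total error is then $O\big((p^{d/2}/n+p^d/n^2)/\kappa^4\big)\to0$ whenever $p^{d/2}\log n/n\to0$. The only care needed is to check that the cross-covariance between the linear and degenerate parts vanishes, which holds by orthogonality of the Hoeffding components, and that the constants depend only on $d,K,C_1$.
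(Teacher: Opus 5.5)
Your proposal is correct and follows the paper's proof essentially step for step. Both use the Hoeffding decomposition of $V_n$, the bound $\E(X_1^\top X_2)^{2d}\le C_{d,K}p^d$ obtained by conditioning on one sample, and Chebyshev under each hypothesis, with assumption \eqref{eq_zeta1_assumption_new} controlling the linear term under $H_1$. The only difference is cosmetic: you bound $\E\|X\|^{2d}$ via the power-mean inequality $\|X\|^{2d}\le p^{d-1}\sum_j X_j^{2d}$ instead of the paper's Gaussian-averaging identity $\E_g(g^\top x)^{2d}=c_d\|x\|^{2d}$, which is slightly more elementary and gives the same $C_{d,K}p^d$.
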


\begin{remark}\label{remark_zeta1_assumption_new}
The additional condition $\Var\big(\langle \bcM_d,X^{\otimes d}\rangle\big) \le C_1 p^{d/2}$ is imposed to control the non-degenerate part of the U-statistic under $H_1$. For many structured alternatives, this condition is mild. For example, if $\bcM_d=\lambda\,u^{\otimes d}$ with $|\lambda|=O(1)$ and $u^\top X$ having bounded $2d$-th moment, then
\[
\langle \bcM_d,X^{\otimes d}\rangle
=
\lambda (u^\top X)^d, \quad\text{and}\quad \Var\big(\langle \bcM_d,X^{\otimes d}\rangle\big)=O(1).
\]
More generally, the assumption is natural for low-rank or spiked alternatives, including rank-one alternatives of the type used in our computational lower-bound construction.
\end{remark}

\paragraph{Cumulant tensor.}
We now consider the analogous detection problem for cumulant tensors. Suppose the observations are centered and whitened, and let $Y_1,\ldots,Y_n$ be i.i.d.\ copies of a random vector $Y\in\R^p$ satisfying $\E Y = 0$ and $\E[YY^\top]=I_p$. Let
\[
\bcK_r \in (\R^{p})^{\otimes r}, \qquad r=1,\ldots,d,
\]
denote the cumulant tensors of $Y$, and suppose that the lower-order cumulants $\bcK_1,\ldots,\bcK_{d-1}$ are known. We are interested in testing
\begin{equation}\label{eq_test_cumulant}
H_0:\ \bcK_d=0
\qquad\text{vs.}\qquad
H_1:\ \|\bcK_d\|\ge \kappa,
\end{equation}
where $\kappa>0$ is a fixed constant.

In the moment case, the basic unbiased feature is $X^{\otimes d}$. For cumulants, the analogous object is a cumulant-corrected polynomial feature map. Since the lower-order cumulants are assumed known, we may form a known degree-$d$ tensor-valued polynomial feature map
\[
\Psi_d(\,\cdot\,;\bcK_1,\ldots,\bcK_{d-1}):\R^p\to (\R^p)^{\otimes d},
\]
constructed from the multivariate moment--cumulant relation, such that
\begin{equation}\label{eq_feature_unbiased_cumulant}
\E[\Psi_d(Y;\bcK_1,\ldots,\bcK_{d-1})]=\bcK_d.
\end{equation}
To simplify notation, we write
\[
\Psi_d(y):=\Psi_d(y;\bcK_1,\ldots,\bcK_{d-1}).
\]

For example, when $d=3$, since $Y$ is centered and whitened, one may take
\[
\Psi_3(y)=y^{\otimes 3}-\operatorname{sym}(y\otimes I_p),
\]
so that $\E[\Psi_3(Y)]=\bcK_3$. For general fixed $d$, $\Psi_d$ can be defined recursively using the known lower-order cumulants.

Define the U-statistic
\begin{equation}\label{eq_Un_cumulant}
V_n^{(\mathrm{cumu})}
:=
\frac{1}{n(n-1)}
\sum_{i\neq j}
\langle \Psi_d(Y_i),\Psi_d(Y_j)\rangle.
\end{equation}
Then, by independence,
\[
\E V_n^{(\mathrm{cumu})}
=
\Big\langle \E[\Psi_d(Y_1)],\E[\Psi_d(Y_2)]\Big\rangle
=
\|\bcK_d\|_F^2.
\]
Hence $V_n^{(\mathrm{cumu})}$ directly estimates $\|\bcK_d\|_F^2$, and therefore can be used to test \eqref{eq_test_cumulant}.

\begin{theorem}[Polynomial-time consistent test for vanishing $d$-th order cumulant tensor]\label{thm_poly_test_cumulant}
Let $Y_1,\ldots,Y_n$ be i.i.d.\ centered and whitened random vectors in $\R^p$, and let $\bcK_d$ denote the $d$-th cumulant tensor of $Y$.
Assume that the lower-order cumulants $\bcK_1,\ldots,\bcK_{d-1}$ are known, and let $\Psi_d$ be a known degree-$d$ tensor-valued polynomial map satisfying \eqref{eq_feature_unbiased_cumulant}.
Consider the hypothesis test \eqref{eq_test_cumulant} with test statistic $V_n^{(\mathrm{cumu})}$ defined in \eqref{eq_Un_cumulant}.

Assume in addition that
\begin{equation}\label{eq_kernel_second_moment_cumulant}
\E\Big[\langle \Psi_d(Y_1),\Psi_d(Y_2)\rangle^2\Big]
\le C_0 p^d
\end{equation}
for some constant $C_0>0$, and that every alternative under $H_1$ satisfies
\begin{equation}\label{eq_zeta1_assumption_cumulant}
\Var\big(\langle \bcK_d,\Psi_d(Y)\rangle\big)
\le C_1 p^{d/2}
\end{equation}
for some constant $C_1>0$.
Let
\[
\tau_n
=
A\frac{p^{d/2}\log n}{n},
\]
where $A>0$ is a sufficiently large constant, and define
\[
\phi_n^{(\mathrm{cumu})}
:=
\mathbf 1_{\{V_n^{(\mathrm{cumu})}>\tau_n\}}.
\]

Then for fixed $d$:
\begin{enumerate}
    \item $\phi_n^{(\mathrm{cumu})}$ is computable in polynomial time in $(n,p)$. More precisely, if each evaluation of $\Psi_d(y)$ requires at most $O(p^d)$ arithmetic operations, then $V_n^{(\mathrm{cumu})}$ can be computed in $O(n^2p^d)$ arithmetic operations.

    \item Under $H_0$,
    \[
    \PP_{H_0}\big(\phi_n^{(\mathrm{cumu})}=1\big)\to 0
    \qquad\text{whenever}\qquad
    \frac{p^{d/2}\log n}{n}\to 0.
    \]

    \item Under $H_1$,
    \[
    \PP_{H_1}\big(\phi_n^{(\mathrm{cumu})}=0\big)\to 0
    \qquad\text{whenever}\qquad
    \frac{p^{d/2}\log n}{n}\to 0.
    \]
\end{enumerate}

In particular, under \eqref{eq_kernel_second_moment_cumulant} and \eqref{eq_zeta1_assumption_cumulant}, the test is polynomial-time computable and consistent whenever
\[
n\gg p^{d/2}\log n.
\]
\end{theorem}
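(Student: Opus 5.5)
The plan is a Hoeffding decomposition of the U-statistic $V_n^{(\mathrm{cumu})}$ followed by Chebyshev's inequality under each hypothesis. Write $h(y,y')=\langle \Psi_d(y),\Psi_d(y')\rangle$, $\theta=\E h(Y_1,Y_2)=\|\bcK_d\|_F^2$, and set
\[
\bar\Psi_d(y)=\Psi_d(y)-\bcK_d,\qquad h_1(y)=\langle \bcK_d,\bar\Psi_d(y)\rangle,\qquad h_2(y,y')=\langle \bar\Psi_d(y),\bar\Psi_d(y')\rangle.
\]
Then
\[
V_n^{(\mathrm{cumu})}-\theta=\frac{2}{n}\sum_i h_1(Y_i)+\frac{1}{n(n-1)}\sum_{i\ne j}h_2(Y_i,Y_j),
\]
and the two terms are uncorrelated, with $h_2$ degenerate.

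\textbf{Computational cost.} If the feature maps $\Psi_d(Y_i)$ are precomputed with $O(np^d)$ operations, then each inner product costs $O(p^d)$, so all pairs cost $O(n^2p^d)$. This is polynomial in $(n,p)$ for fixed $d$.

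\textbf{Variance bound.} We have $\Var(V_n^{(\mathrm{cumu})})\le \frac{4}{n}\Var(h_1(Y))+\frac{2}{n(n-1)}\E h_2^2$. Expanding $h_2=h-\langle\bcK_d,\Psi_d(y)\rangle-\langle\bcK_d,\Psi_d(y')\rangle+\theta$ gives $\E h_2^2\le \E h^2\le C_0p^d$ by \eqref{eq_kernel_second_moment_cumulant}. The justification is that $\E h_2^2=\E h^2-2\E[\langle\bcK_d,\Psi_d\rangle^2]+\theta^2$, which is at most $\E h^2$ because $\E[\langle\bcK_d,\Psi_d\rangle^2]\ge\theta^2$ by Jensen. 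Under $H_0$ we have $h_1\equiv0$, so $\Var(V_n^{(\mathrm{cumu})})\le 2C_0p^d/(n(n-1))$. Chebyshev then gives
\[
\PP_{H_0}(V_n^{(\mathrm{cumu})}>\tau_n)\le \frac{2C_0p^d/n^2}{A^2p^d(\log n)^2/n^2}\to0.
\]
This step needs only $n\to\infty$.

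\textbf{Type II error.} Under $H_1$, $\theta=\|\bcK_d\|_F^2\ge\|\bcK_d\|^2\ge\kappa^2$, while $\tau_n\to0$ by assumption, so eventually $\tau_n\le\theta/2$. By Chebyshev,
\[
\PP_{H_1}(V_n^{(\mathrm{cumu})}\le\tau_n)\le \frac{4\Var(V_n^{(\mathrm{cumu})})}{\theta^2}\lesssim \frac{C_1p^{d/2}}{n\kappa^4}+\frac{C_0p^d}{n^2\kappa^4},
\]
using \eqref{eq_zeta1_assumption_cumulant}. Both terms are $O\big((p^{d/2}/n)+(p^{d/2}/n)^2\big)\to0$ under $p^{d/2}\log n/n\to0$.

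\textbf{Main obstacle.} The only delicate step is verifying that $\E h_2^2$ is controlled by the assumed bound \eqref{eq_kernel_second_moment_cumulant} rather than by something involving $\theta$. One could worry that $\theta$ might grow with $p$ and enlarge the variance. The Jensen argument above disposes of this. The $H_1$ bound also scales favorably, since a large $\theta$ only helps through the $1/\theta^2$ factor.
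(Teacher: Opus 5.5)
Your proposal is correct and follows essentially the same route as the paper. Both use the Hoeffding decomposition into the linear part $\frac{2}{n}\sum_i g(Y_i)$ and a degenerate part, the identity $\Var(V_n^{(\mathrm{cumu})})=\frac{4}{n}\Var(g(Y_1))+\frac{2}{n(n-1)}\Var(R(Y_1,Y_2))$, control of the degenerate term via \eqref{eq_kernel_second_moment_cumulant}, and Chebyshev's inequality under each hypothesis. The only difference is cosmetic: you compute $\E h_2^2=\E h^2-2\E[\langle\bcK_d,\Psi_d(Y)\rangle^2]+\theta^2\le\E h^2$ exactly, whereas the paper uses the cruder bound $\E R^2\le 16\,\E h^2$ (from $(a+b+c+d)^2\le 4(a^2+b^2+c^2+d^2)$ and Jensen), which only changes constants.
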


\begin{remark}\label{remark_assumption_cumulant}
The assumption $\Var\big(\langle \bcK_d,\Psi_d(Y)\rangle\big)\le C_1 p^{d/2}$ is the cumulant analogue of \eqref{eq_zeta1_assumption_new}. It controls the non-degenerate part of the U-statistic under $H_1$ and is natural for many structured alternatives, such as low-rank or spiked cumulant tensors.

The second assumption $\E[\langle \Psi_d(Y_1),\Psi_d(Y_2)\rangle^2]\le C_0 p^d$ is a second-moment bound. It holds in many sub-Gaussian settings because $\Psi_d(Y)$ is a fixed degree-$d$ polynomial in $Y$ whose coefficients depend only on the known lower-order cumulants.
\end{remark}

\begin{remark}\label{remark_construction_cumulant_feature}
The feature map $\Psi_d$ may be interpreted as a cumulant-corrected version of $Y^{\otimes d}$, obtained by removing the lower-order contributions in the moment--cumulant formula. When the lower-order cumulants are known, this map is fully determined and hence explicitly available to the testing procedure.

More generally, the theorem does not depend on this particular construction: it remains valid for any known feature map $\Psi_d$ such that $\E[\Psi_d(Y)] = \bcK_d$. Thus, knowledge of the lower-order cumulants is a convenient sufficient condition for constructing $\Psi_d$, but not a necessary condition for the validity of the result.
\end{remark}

\section{Open Problems}\label{sec_open_problem}

The previous section characterized the computational limits of the detection problem. A natural next question is to understand the computational limits of estimation, and in particular to establish computational lower bounds. Compared with detection, however, this question is substantially more subtle. Existing approaches from the literature do not directly apply here, because estimation under tensor spectral norm has several features that differ qualitatively from the standard settings in which computational lower bounds are well understood. As a result, the computational complexity of estimation in our problem remains largely open.

\subsection{Consistent Estimation}

In the literature, the computational hardness of estimation problems is often established by reduction to the corresponding hypothesis testing problem. This reduction can be made formal (see, e.g., Section~17 of \cite{brennanReducibilityStatisticalComputationalGaps2020}), but the underlying logic is straightforward and the same as the one discussed in Section \ref{sec_statistical_detection}: if an estimator $\hat\mu$ consistently estimates a scalar parameter $\mu$, then one can test 
$H_0: \mu = 0$ vs. $H_1: \mu > c$ 
using the decision rule $\xi = \mathbb I_{\{|\hat\mu| > c/2\}}$. Consistency of $\hat\mu$ guarantees that $\xi$ converges in probability to the correct decision, implying that the computational complexity of testing provides a lower bound for that of estimation.

This reasoning, however, does not directly apply to moment or cumulant tensor estimation. The key obstruction is that the natural loss function of interest---the tensor spectral norm---is itself NP-hard to compute or even approximate. Consequently, even if a consistent estimator $\widehat{\bcT}$ of a tensor parameter $\bcT$ can be computed efficiently, the corresponding decision rule $\xi = I_{\{\|\widehat \bcT\| > c/2\}}$ may remain computationally intractable. 

Despite this difficulty, there is a striking alignment between the computational threshold for hypothesis testing established in Section~\ref{sec_hypothesis_test} and the upper bound in Theorem~\ref{thm_upper_bound_sample_cumulants}. Specifically, when $n \gg p^{d/2}$, the sample cumulant tensor provides a computationally efficient consistent estimator, whereas when $n \ll p^{d/2}$, hypothesis testing becomes computationally hard. This motivates the following conjecture.

\begin{conjecture}\label{conj_consistent}
    When $n \ll p^{d/2}$, there is no efficient algorithm that can consistently estimate the order-$d$ cumulant or moment tensors of any sub-Gaussian random vector.
\end{conjecture}

Proving Conjecture~\ref{conj_consistent} is challenging, primarily due to the tensor spectral norm. To the best of our knowledge, very few works directly study computational lower bounds for estimation problems involving losses that are themselves hard to compute, making average-case reductions difficult to construct. Moreover, existing frameworks for establishing computational hardness of estimation---such as the low-degree polynomial or statistical query frameworks---typically require the loss to decompose into a sum of coordinate-wise losses (e.g., the Frobenius norm), and therefore do not apply here. Additional obstacles arise from the fact that the parameter space has dimension $p^d$, which complicates the application of memory- or communication-constrained models. 

If Conjecture~\ref{conj_consistent} holds, then combined with the results of Section~\ref{sec_sample_moments}, it yields a complete computational-statistical phase diagram for consistent estimation. 
\begin{itemize}
\item When $n \ll p$, the problem is statistically impossible. 
\item When $p \ll n \ll p^{d/2}$, consistent estimation is statistically possible but computationally intractable. 
\item When $n \gg p^{d/2}$, the sample moment (or cumulant) tensor is a computationally efficient consistent estimator. 
\end{itemize}

One piece of indirect evidence supporting Conjecture \ref{conj_consistent} is that, if one attempts to adapt the spectrally truncated moment estimator, which is consistent for heavy-tailed covariance estimation whenever $n \gg p$, then a naive bias--variance decomposition for higher-order moments fails to deliver the desired rate. Specifically, the truncation bias and the stochastic fluctuation cannot be controlled simultaneously at the target scale.

\subsection{Optimal Estimation}

Recall from Section~\ref{sec_statistical_limit} that the minimax-optimal error rate for estimating $d$-th order moment tensors in spectral norm is $\sqrt{p/n}$. In contrast, when $p^{d/2} \ll n \ll p^{d-1}$, the sample moment tensor estimator attains an error rate of $p^{d/2}/n$, which is strictly suboptimal. In many statistical problems, once consistent estimation becomes possible, one can further refine the estimator to achieve the optimal rate---often by using a consistent estimator as initialization for an iterative refinement procedure. Examples of this paradigm appear in tensor problems with low-rank structure \citep{zhang2018tensor, tang2025revisit}.

In the moment and cumulant estimation setting, however, we do not impose any additional structural constraints such as low rankness, which significantly limits opportunities for algorithmic refinement beyond the sample moment tensors. 
A related line of work concerns covariance estimation in spectral norm under heavy-tailed distributions, where suitably truncated sample covariance estimators are known to be optimal \citep{minskerEstimationCovarianceStructure2018,keUserFriendlyCovarianceEstimation2019}. These techniques rely crucially on spectral properties of matrices and do not extend to higher-order tensors, highlighting a fundamental gap in current methodology.

On the other hand, there are notable exceptions where a parameter can be consistently estimated but cannot be estimated at the optimal rate by any efficient algorithm \citep{luoComputationalLowerBounds2024, zhangLowerBoundsPerformance2014}. 
This leads us to the following conjecture.

\begin{conjecture}\label{conj_optimal}
    When $p^{d/2} \ll n \ll p^{d-1}$, the computationally optimal error rate for estimating $d$-th order moment or cumulant tensors in spectral norm is $p^{d/2}/n$. 
\end{conjecture}

The main obstacle to resolving Conjecture~\ref{conj_optimal} is the same as for Conjecture~\ref{conj_consistent}: the lack of general frameworks capable of establishing computational lower bounds for estimation problems with hard-to-compute losses.

\section{Discussion}

In this paper, we studied the estimation and detection of high-order moment and cumulant tensors under tensor spectral norm, and uncovered a computational--statistical landscape that is qualitatively different from the matrix case and from more classical planted problems. On the statistical side, we showed that the minimax-optimal estimation rate is $\sqrt{p/n}\wedge 1$, while the naive sample moment and cumulant tensors are generally suboptimal for $d\ge 3$. On the computational side, low-degree evidence suggests that detection is hard when $n\ll p^{d/2}$, and this leads to an unusual reverse detection--estimation gap.

Our results leave open the computational complexity of estimation itself. In many high-dimensional problems, computational lower bounds for estimation are derived through reductions from testing. In the present setting, however, such reductions are no longer algorithmically transparent, because the relevant loss function is the tensor spectral norm, which is NP-hard to compute or approximate in general. As a result, even if an estimator can be computed efficiently, certifying that it achieves small error in the target norm may itself be computationally intractable.

This suggests several directions for future work. A first goal is to determine whether the threshold $n\asymp p^{d/2}$ indeed marks the onset of computationally efficient consistent estimation. A second goal is to characterize the computationally optimal estimation error in the intermediate regime $p^{d/2}\ll n\ll p^{d-1}$, where the statistical minimax rate and the best known efficient estimators do not match. More broadly, our results point to a new class of open problems in which the computational difficulty is driven not only by the statistical model, but also by the intrinsic hardness of the loss used to evaluate performance.

\section*{Acknowledgments}

The authors thank Wei Biao Wu and Cheng Mao for helpful discussions.

\bibliography{reference}
\bibliographystyle{plain}

\appendix

\newpage

\section{Additional Results for the Third Moment}

In this section, we discuss more theoretical results for the third moments. 

\subsection{Existence}

A fundamental question closely related to moment tensors concerns existence: given finitely many moments of an (unknown) probability distribution, does there exist a distribution attaining these moments, and if so, can the distribution be constructed? This question is known as the truncated moment problem. It has been extensively studied and has numerous applications, including extreme problems, optimization, and limit theorems in probability theory. General existence results can be found, for example, in Chapter 17 of \cite{schmudgen2017moment}. In this section, we provide an explicit construction of a random vector whose first three moments match prescribed values.

\begin{lemma}\label{lm:existence-w}
    For any real value $w$, there exists:
    \begin{itemize}
        \item a random variable $X$ such that 
        $$\mathbb{E}X=0, \mathbb{E}X^2=1, \mathbb{E}X^3 = w.$$
        \item two random variables $X, Y$ such that
        $$\mathbb{E}X = \mathbb{E}Y=0, \quad \mathbb{E}[X, Y]^\top [X, Y]= \bI_{2\times 2}, \quad \mathbb{E}X^3 = \mathbb{E}Y^3 = \mathbb{E}X^2Y = 0,\quad \mathbb{E}XY^2= w. $$
    \end{itemize}
\end{lemma}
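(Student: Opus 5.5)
For the first claim, I would take $X$ to be a two-point distribution: it is explicit and has exactly enough freedom. Let $X$ take values $a>0$ and $-b<0$ with probabilities $q$ and $1-q$. The conditions $\E X=0$ and $\E X^2=1$ force $q=b/(a+b)$ and $ab=1$. The third moment is then
\[
\E X^3 = qa^3-(1-q)b^3 = ab(a-b) = a-b = a-1/a .
\]
The map $a\mapsto a-1/a$ is a continuous strictly increasing bijection from $(0,\infty)$ onto $\R$. So for any $w$ we can take $a=(w+\sqrt{w^2+4})/2$ and $b=1/a$. This settles the first bullet, with $w=0$ giving the Rademacher case.

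For the second claim, the constraints are symmetric under $X\mapsto -X$ except for $\E X=0$ and $\E X^3=0$. This suggests taking $X=\varepsilon$, a Rademacher sign, and letting $Y$ be independent of $\varepsilon$, but multiplying by it to produce the mixed term. I would first try $Y=\varepsilon Z$, with $Z$ independent of $\varepsilon$. Then $\E XY^2=\E\varepsilon\,\E Z^2=0$, which fails. So the coupling must instead enter through $Y^2$.

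The construction I would commit to uses a Rademacher sign $\varepsilon$ and an independent Rademacher sign $\eta$. Set $X=\varepsilon$ and
\[
Y=\eta\,\sqrt{1+w\varepsilon/s}\;\cdot\; \text{(normalized)},
\]
where the aim is to make $Y^2$ depend linearly on $\varepsilon$ while $Y$ stays sign-symmetric. Concretely, let $Y=\eta R$ with $R^2=1+t\varepsilon$ and $|t|\le 1$, where $R\ge0$ is a function of $\varepsilon$. Then the moments follow directly:
\begin{itemize}
\item $\E X=\E Y=0$, and $\E XY=\E\varepsilon\eta R=0$.
\item $\E X^2=1$ and $\E Y^2=\E(1+t\varepsilon)=1$.
\item $\E X^3=\E\varepsilon=0$ and $\E Y^3=\E\eta\,\E R^3=0$.
\item $\E X^2Y=\E\eta R=0$.
\item $\E XY^2=\E\varepsilon(1+t\varepsilon)=t$.
\end{itemize}
This gives every $|w|\le1$ by taking $t=w$.

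The main obstacle is large $|w|$. The bound $|\E XY^2|\le \sqrt{\E X^2 Y^4}$ rules out no value, but $X=\pm1$ with $\E Y^2=1$ caps $\E XY^2$ at $1$. To remove the cap I would replace $X=\varepsilon$ by the skewless heavy two-sided variable $X=\varepsilon A$, where $A\ge0$ is independent of $\eta$ and satisfies $\E A^2=1$.

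Next, set $R^2=1+t\varepsilon A$, which requires $tA\le1$ on the support of $A$. Then $\E XY^2=t\,\E A^2=t$, which again only reaches $|t|\le 1/\max A$. So I would instead put the signal only on a rare event. Take $A=M$ with probability $1/M^2$ and $A=0$ otherwise, which keeps $\E A^2=1$. Then choose
\[
R^2 = 1+\varepsilon \lambda\,\mathbf 1\{A=M\}-\text{const},
\]
normalized so that $\E R^2=1$. This yields $\E XY^2=M\lambda/M^2=\lambda/M$, which again is bounded.

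Because this keeps hitting a ceiling, my fallback is an existence argument. The set of achievable vectors $(\E XY^2)$ under the remaining linear moment constraints is the projection of a convex set, namely the mixtures of point masses. The reflection $X\mapsto -X$ maps an achievable value $w$ to $-w$ and preserves all the other constraints. By convexity, it therefore suffices to show that the achievable values are unbounded above. I would check this numerically with small discrete supports, and I expect the verification of unboundedness to be the crux of the second bullet.
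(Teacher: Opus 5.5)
\textbf{Verdict: there is a genuine gap, in the second bullet for $|w|>1$.}

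Your first bullet is correct and is the same two-point construction the paper uses. Your second-bullet construction $X=\varepsilon$, $Y=\eta R$ with $R^2=1+w\varepsilon$ is also correct, but only for $|w|\le 1$.

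\textbf{Why your attempts stall.} Each of them writes $R^2$ as a constant baseline plus a perturbation that is odd in $\varepsilon$, for example $1+\varepsilon\lambda\mathbf 1\{A=M\}$. Nonnegativity of $R^2$ on the branch $\varepsilon=-1$ then forces the perturbation to be no larger than the baseline, and that is exactly what caps $\E XY^2$ (at $\lambda/M\le 1/M$ in your last attempt).

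\textbf{Why the fallback does not close it.} Your convexity-plus-reflection reduction is valid: all constraints are linear in the law, and $X\mapsto -X$ sends $w$ to $-w$. But it only reduces the problem to showing $\sup \E XY^2=\infty$, and that is the whole content of the $|w|>1$ case. Proposing to check it numerically leaves it unproved.

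\textbf{The missing idea.} Put all of $Y$'s variance on the rare event where $X$ is large and positive, with no baseline and no sign-symmetric perturbation. Keep your own $X=\varepsilon A$ with $\PP(A=M)=1/M^2$, $\PP(A=0)=1-1/M^2$, $M\ge 1$. Set $R^2=2M^2\,\mathbf 1\{\varepsilon=1,\ A=M\}$ and $Y=\eta R$, with $\eta$ independent of $(\varepsilon,A)$. Then:
\begin{itemize}
\item $R^2\ge 0$ trivially, and $\E Y^2=\E R^2=2M^2\cdot\tfrac{1}{2M^2}=1$;
\item $\E X=\E X^3=0$ and $\E X^2=1$;
\item $\E Y=\E Y^3=\E XY=\E X^2Y=0$, because $\eta$ is independent of $(X,R)$;
\item $\E XY^2=\E XR^2=M\cdot 2M^2\cdot\tfrac{1}{2M^2}=M$.
\end{itemize}
Taking $M=|w|$, and replacing $X$ by $-X$ when $w<0$, gives every $|w|\ge 1$. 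Together with your $|w|\le 1$ case this proves the second bullet.

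\textbf{Comparison with the paper.} This is the same mechanism the paper uses. In its joint table it sets $g_1=g_2=0$, so that $Y=\pm a_4$ can occur only on the row $X=a_3$. That gives $w=a_3g_3/b_1$, which can be made as large as $a_3$. When $a_1=a_2$, the paper's $X$ reduces to your three-point variable $\varepsilon A$ with $M=a_3$.
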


\begin{proof}
The first part of this lemma can be achieved by assuming $X$ is Bernoulli distribution. Specifically, let $\PP(X=-ga)=1-g, \PP(X=(1-g)a)=g$ with $g=1/2+b/2$, $a>0$ and $-1<b<1$. Obviously, $\E(X)=0$. As $\E(X^2)=1$ and $\E(X^3)=w$, we have
\begin{align*}
\left( \frac14-\frac14 b^2\right) a^2&=1,\\
-ab&=w.
\end{align*}
It follows that $b=\text{sgn}(w) (4/w^2+1)^{-1/2}$. Then, for any given $w$, there exists such a $b$ ensuring that the Bernoulli distribution $X$ is well-defined.

For the second part of the lemma, consider random variables $X=0,-a_1,-a_2,a_3$, $Y=0,-a_4,a_4$, where all of $a_1,a_2,a_3$ are either positive or negative numbers. The joint distribution $(X, Y)$ satisfies the following distribution table:
\begin{center}
\begin{tabular}{c|ccc|c}
\diagbox{$X$}{$Y$} & 0 & $-a_4$ & $a_4$ & \\ \hline
0      &  & & & \\
$-a_1$ & & $g_1$ & $g_1$ & $b_2$\\
$-a_2$ & & $g_2$ & $g_2$ & $b_2$\\
$a_3$  & & $g_3$ & $g_3$ & $b_3$\\ \hline
       & & $b_1$ & $b_1$ &
\end{tabular}
\end{center}
Note that the final column and row in the table denote the marginal probabilities for the random variables $X$ and $Y$, respectively. To maintain degrees of freedom, some values in the distribution table have been omitted. Then, obviously $\E(Y)=0, \E(Y^3)=0,\E(XY)=0,\E(X^2Y)=0$. The other conditions, $\E(X)=0,\E(X^2)=1,\E(X^3)=0,\E(Y^2)=1,\E(XY^2)=w$, are given by the following equations:
\begin{align}
-(a_1+ a_2)b_2+a_3 b_3&=0, \label{lemma:exist:eq1}   \\
 -(a_1^3+a_2^3)b_2 + a_3^3 b_3&=0, \label{lemma:exist:eq2} \\
(a_1^2+a_2^2)b_2 + a_3^2 b_3&=1, \label{lemma:exist:eq3} \\
 2 a_4^2 b_1 &=1, \label{lemma:exist:eq4} \\
 2a_4^2(a_3g_3-a_1g_1-a_2 g_2)&=w. \label{lemma:exist:eq5}
\end{align}
Equations \eqref{lemma:exist:eq1}-\eqref{lemma:exist:eq3} give us
\begin{align}
a_3^2 &=a_1^2+a_2^2-a_1 a_2, \label{lemma:exist:eq6} \\
b_2&= \frac{1}{(a_1+a_3)(a_2+a_3)},  \label{lemma:exist:eq7} \\
b_3&= \frac{a_1+a_2}{a_3(a_1+a_3)(a_2+a_3)}. \label{lemma:exist:eq8}
\end{align}
From \eqref{lemma:exist:eq6}, the values $a_1,a_2,a_3$ increase proportionally. As $b_2,b_3$ are marginal probabilities with $2b_2+b_3\le 1$, we need $|a_1|,|a_2|,|a_3|$ to greater than some constant (e.g. $|a_1|,|a_2|,|a_3|\ge1$).

Substituting \eqref{lemma:exist:eq4} into \eqref{lemma:exist:eq5} and set $g_1=g_2=0$, we have
\begin{align*}
w=a_3g_3/b_1.    
\end{align*}
When $|w|\ge1$, as $b_3\ge 2g_3$, we have $|a_3|\ge 2b_1 |w|/b_3$ with $2b_1$ close to $b_3$. There exists such a $|a_3|$ ensuring that the joint distribution of $(X,Y)$ is well-defined. If $|w|<1$, then we can always set a sufficiently small $g_3$ to satisfy $a_3= w b_1/g_3$, which also ensures that the joint distribution of $(X,Y)$ is well-defined. Therefore, the second part can be constructed using the above joint discrete distribution $(X, Y)$. 
\end{proof}

\begin{theorem}\label{thm:existence}
    Suppose $\bM\in \mathbb{R}^{p\times p}$ is a symmetric matrix and $\bcT\in \mathbb{R}^{p\times p\times p}$ is a symmetric tensor. If and only if $\bM$ is positive semidefinite with eigenvectors associated with nonzero eigenvalues as $[u_1, \ldots, u_r] = \bU$, and $\bcT\times_1 \bU_{\perp} = 0$, there exists a random vector $X$ such that $\E(X)=0, \E(X X^\top) = \bM, \E(X \circ X\circ X)= \bcT$. 		
\end{theorem}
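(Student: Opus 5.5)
Necessity is immediate. If $X$ exists, then $\bM=\E XX^\top$ is positive semidefinite. For any $v\in\mathrm{col}(\bU_\perp)$ we have $\E (v^\top X)^2 = v^\top \bM v=0$, so $v^\top X=0$ almost surely. Contracting the first mode of $\bcT=\E X\circ X\circ X$ with $v$ then gives $\E[(v^\top X)X\circ X]=0$, that is, $\bcT\times_1\bU_\perp=0$.

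For sufficiency, we first reduce to the whitened, full-rank case. Write $\bM=\bU\Lambda\bU^\top$ with $\Lambda=\mathrm{diag}(\lambda_1,\ldots,\lambda_r)\succ0$. We seek $X=\bU\Lambda^{1/2}Z$ with $Z\in\R^r$ satisfying $\E Z=0$ and $\E ZZ^\top=\bI_r$. Since $\bcT$ is symmetric and $\bcT\times_1\bU_\perp=0$, its fibers in every mode lie in $\mathrm{col}(\bU)$. Hence $\bcT=\bcS\times_1\bU\Lambda^{1/2}\times_2\bU\Lambda^{1/2}\times_3\bU\Lambda^{1/2}$, where the symmetric $r\times r\times r$ tensor $\bcS$ is obtained by multiplying $\bcT$ in each mode by $\Lambda^{-1/2}\bU^\top$. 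It therefore suffices to construct a centered isotropic $Z\in\R^r$ with $\E Z^{\circ3}=\bcS$ for an arbitrary symmetric $\bcS$.

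The main step, and the main obstacle, is realizing an arbitrary symmetric third-order tensor $\bcS$ together with identity covariance. I would decompose $\bcS$ into elementary pieces that Lemma~\ref{lm:existence-w} can realize. Any symmetric tensor can be written as a sum of terms of three kinds: diagonal entries $w\,e_i^{\circ3}$; terms $w\,\mathrm{sym}(e_i\circ e_j\circ e_j)$ with $i\neq j$; and terms $w\,\mathrm{sym}(e_i\circ e_j\circ e_k)$ with $i,j,k$ distinct. The first part of the lemma handles the first kind, and the second part handles the second kind. For the third kind I would avoid a new gadget by a polarization-type change of basis, rewriting each distinct-index monomial as a combination of terms $\mathrm{sym}(a\circ b\circ b)$ in rotated orthonormal coordinates; since the orthogonal group acts transitively on orthonormal pairs, the second-part gadget can be placed along any orthonormal pair $(a,b)$. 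If this identity is messy, the fallback is a direct three-variable analogue of the lemma's distribution table.

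Summing contributions requires independent, centered, isotropic pieces whose third moments add. A mixture does not work directly, because mixing dilutes covariance and third moments differently. Instead I would use the fact that $Z=\sum_{k=1}^N Z^{(k)}/\sqrt N$ with independent centered pieces having $\E Z^{(k)}Z^{(k)\top}=\bI_r$ yields covariance $\bI_r$ and third moment $N^{-3/2}\sum_k \E (Z^{(k)})^{\circ3}$, since independence and centering kill all cross terms. Each piece is built from the lemma's gadgets on coordinate or rotated pairs, padded with independent Rademacher coordinates (which have zero third moments) so that it is fully isotropic. Because the lemma allows arbitrary $w$, each gadget's target can be scaled by $N^{3/2}$ to absorb the normalization. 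Finally set $X=\bU\Lambda^{1/2}Z$ and verify the three moment identities.
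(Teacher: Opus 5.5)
Your argument is correct, and it reaches the isotropic case by a genuinely different construction from the paper's.

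\textbf{What is shared.} The necessity step and the whitening reduction $X=\bU\Lambda^{1/2}Z$ match the paper's; your $v^\top X=0$ a.s.\ argument is the cleaner version of necessity. Both proofs also rest on additivity of second and third moments over independent centered pieces.

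\textbf{How the constructions differ.} For $\bM=\bI$ the paper works slice by slice. It generates all entries $(h,i,j)$ with $i,j>h$ at once from a Gaussian quadratic chaos $Y=X^\top\bS X-\sigma^2\tr(\bS)$, using $\E YX_iX_j=2\sigma^4\bS_{ij}$. It fills the entries $(h,h,k)$ with part~2 of Lemma~\ref{lm:existence-w} under unbalanced scalings, and corrects $(h,h,h)$ with part~1. It then multiplies by an independent scalar with prescribed second and third moments, to decouple the two normalizations. Finally it adds a Gaussian with diagonal covariance $(p-k)/p$ to restore the identity. You instead expand in the monomial basis, realize each basis element by one rotated, Rademacher-padded gadget, and absorb the $N^{-3/2}$ from the normalized sum into the free parameter $w$.

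\textbf{What each buys.} The paper's construction uses $O(p^2)$ components rather than your $O(r^3)$, and it handles a whole slice block with one chaos variable. However, it needs the Gaussian chaos computation, the multiplicative trick and the final fill-up. It also needs separate treatment of degenerate slices, e.g.\ $\tr(\bS_h^2)=0$, where its $\sigma$ is undefined. Yours is uniform and purely linear in the target.

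\textbf{The polarization step.} The identity you left open is immediate:
$\mathrm{sym}(e_i\circ e_j\circ e_k)=\tfrac12[\mathrm{sym}(e_i\circ b_+\circ b_+)-\mathrm{sym}(e_i\circ b_-\circ b_-)]$ with $b_\pm=(e_j\pm e_k)/\sqrt2\perp e_i$. In fact, since cubes $c^{\circ3}$ span the symmetric tensors, part~1 of the lemma in rotated frames would already suffice.

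\textbf{A harmless error.} Your side remark that a mixture fails is wrong. For centered isotropic components, covariance and third moment are averaged with the same weights. So a uniform mixture with third-moment targets multiplied by $N$ does the same job as your normalized sum.
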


\begin{proof}
    {\textbf{Sufficiency. \\} }
    We begin with the first part of the statement: for given $\bM$ is positive semidefinite with eigenvectors associated with nonzero eigenvalues as $[u_1, \ldots, u_m] = \bU$, and $\times_1 \bU_{\perp} = 0$, there exists a random vector $X$ such that $\E(X)=0, \E(X X^\top) = \bM, \E(X \circ X\circ X)= \bcT$. 
    
    First, we consider a special case when $\bM = \bI$. 
    Notice the following facts: 
        \begin{enumerate}
            \item Let $X = (X_1, \ldots, X_p) \sim N(0, \sigma^2 I)$ and $Y = X\T \bS X - \sigma^2\tr(\bS)$ for some symmetric $\bS$. Then we have $\E Y = \E X_i = 0, \E X_i Y = \E X_i X_j = 0, \E X_i^2 = \sigma^2, \E Y^2 = 2\sigma^4\tr(\bS^2), \E YX_i X_j = 2 \sigma^4 \bS_{i,j}, \E Y^2 X_i = \E X_i^3 = 0, \E Y^3 = 8 \sigma^6 \tr(\bS^3)$. 
            \item For random vectors $X \perp Y$ with $\E X = \E Y = 0$, we have $\E (X+Y)^{\otimes 3} = \E X^{\otimes 3} + \E Y^{\otimes 3}$ and $\E (X+Y)^{\otimes 2} = \E X^{\otimes 2} + \E Y^{\otimes 2}$. 
        \end{enumerate}
    Now, for a given tensor $\bcT$, we consider the following construction procedure. For the first slice $\bcT_{1, i, j}$: 
    \begin{enumerate}
        \item We let matrix $\bS_1 \in \R^{(p-1) \times (p-1)}$ be $(\bS_1)_{i,j} = (\bcT)_{1, 1+i, 1+j}$. Let $X_{1,1} = (Y, X)$ be constructed in the fact 1 with $\bS = \bS_1/2$ and $\sigma = (2\tr(\bS^2))^{-1/2}$.  
        \item For $k = 2, \dots, p$, let $X_{1,k}$ be the vector with first entry as $(p-1)^{-1/6}X$, $k$th entry as $(p-1)^{1/3}Y$, and 0 otherwise. $X$ and $Y$ are constructed as in the part 2 of Lemma \ref{lm:existence-w} such that $\mathbb{E}[X, Y]^\top [X, Y] = \beta \bI_{2\times 2}$, $\mathbb{E}X Y^2 = 0$, and $\mathbb{E}X^2Y = \sigma^4 (\bcT)_{1, 1, k}$ for $\beta = \sigma^2 (p-1)^{-2/3}$. 
        \item Let $X_{1,p+1} = (X, Y)$ where $X$ is constructed as in the part 1 of Lemma \ref{lm:existence-w} such that $\mathbb{E}X^2=\sigma^2, \mathbb{E}X^3 = \sigma^4 (\bcT)_{1, 1, 1} - 8\sigma^6 \tr(\bS^3)$, and $Y \sim N(0, \sigma^2\bI_{(p-1) \times (p-1)}), Y \perp X$. 
        \item Let $\tilde X_1 = \sum_{k \in [p+1]} X_{1,k}$. By the fact 2 above, we have $(\E \tilde  X_1^{\otimes 3})_{1,i,j} = \sigma^4 (\bcT)_{1, i, j}$ and $\E  \tilde X_1^{\otimes 2} = (\sigma^2 + \beta (p-1)^{2/3} + \sigma^2) \bI_{p\times p} = 3\sigma^2 \bI_{p\times p}$. 
        \item Let $Y$ be independent of $\tilde X_1$ and constructed as in the part 1 of Lemma \ref{lm:existence-w} such that $\E Y^2 = 1/(3 p \sigma^2)$ and $\E Y^3 = 1/\sigma^4$. Now let $X_1 = \tilde X_1 Y$. We have $(\E X_1^{\otimes 3})_{1,i,j} = (\bcT)_{1, i, j}$, $(\E X_1^{\otimes 3})_{k,i,j} = 0$ for $i,j,k>1$, and $\E X_1^{\otimes 2} = p^{-1} \bI$. 
    \end{enumerate}
    
    For the $h$th slice $\bcT_{h, i, j}$: 
    \begin{enumerate}
        \item We let matrix $\bS_h \in \R^{(p-h) \times (p-h)}$ be $(\bS_h)_{i,j} = (\bcT)_{h, h+i, h+j}$. Let $X_{h,h} = (Y, X)$ be constructed in the fact 1 with $\bS = \bS_h/2$ and $\sigma = (2\tr(\bS^2))^{-1/2}$.  
        \item For $k = h+1, \dots, p$, let $X_{h,k}$ be the vector with first entry as $(p-h)^{-1/6}X$, $k$th entry as $(p-h)^{1/3}Y$, and 0 otherwise. $X$ and $Y$ are constructed as in the part 2 of Lemma \ref{lm:existence-w} such that $\mathbb{E}[X, Y]^\top [X, Y] = \beta \bI_{2\times 2}$, $\mathbb{E}X Y^2 = 0$, and $\mathbb{E}X^2 Y= \sigma^4 (\bcT)_{h, h, k}$ for $\beta = \sigma^2 (p-h)^{-2/3}$. 
        \item Let $X_{h,p+1} = (X, Y)$ where $X$ is constructed as in the part 1 of Lemma \ref{lm:existence-w} such that $\mathbb{E}X^2=\sigma^2, \mathbb{E}X^3 = \sigma^4 (\bcT)_{h,h,h} - 8\sigma^6 \tr(\bS^3)$, and $Y \sim N(0, \sigma^2\bI_{(p-h) \times (p-h)}), Y \perp X$. 
        \item Let $ \tilde X_h = \sum_{k = h}^{p+1} X_{h,k}$. By the fact 2 above, we have $(\E \tilde X_h^{\otimes 3})_{1,i,j} = \sigma^4 (\bcT)_{h, h-1+i, h-1+j}$ and $\E \tilde X_h^{\otimes 2} = (\sigma^2 + \beta (p-h)^{2/3} + \sigma^2) \bI_{(p+1-h) \times (p+1-h)} = 3 \sigma^2 \bI_{(p+1-h) \times (p+1-h)}$. 
        \item Let $Y$ be independent of $\tilde X_h$ and constructed as in the part 1 of Lemma \ref{lm:existence-w} such that $\E Y^2 = 1/(3 p \sigma^2)$ ($= 1/(p \sigma^2)$ if $h = p$) and $\E Y^3 = 1/\sigma^4$. Now let $\bar X_h = \tilde X_h Y$. We have $(\E \bar X_h^{\otimes 3})_{1,i,j} = (\bcT)_{h, h-1+i, h-1+j}$, $(\E \bar X_h^{\otimes 3})_{k,i,j} = 0$ for $i,j,k>1$, and $\E \bar X_h^{\otimes 2} = p^{-1} \bI_{(p+1-h) \times (p+1-h)}$. 
        \item Let $X_h = (0, \ldots, 0, \bar X_h) \in \R^p$. 
    \end{enumerate}
    We finally let $\tilde X = \sum_{h \in [p]} X_h$. We have $\E \tilde X^{\otimes 3} = \bcT$ and $\E \tilde X^{\otimes 2}$ is a diagonal matrix with $k$th diagonal entry $(\E \tilde X^{\otimes 2})_{k,k} = k/p$. Let $Y \sim N(0, \bD)$ with diagonal matrix $\bD \in \R^p$ given by $(\bD)_{kk} = (p-k)/p$. Let $X = \tilde X + Y$. Then we have $\E X^{\otimes 3} = \bcT$ and $\E X^{\otimes 2} = \bI$.     

    Now, for geneal $\bM$, and an order-3 symmetric tensor $\bcT$ with full Tucker rank, if random vector $X$ satisfy $\E XX\T = \bM$ and $\E X \circ X \circ X = \bcT$, then we let $Y = \bA X$ where $\bA$ satisfies $\bA = (\bU \bSigma^{1/2})^\dagger = \bSigma^{-1/2} \bU\T$ and $\bM = \bU \bSigma \bU\T$ is the SVD. Then, it follows $\E YY\T = \bI$ and $\E Y \circ Y \circ Y = \bcT \times_1\bA \times_2 \bA \times_3 \bA$. Thus, we can apply the above argument to construct $Y$ and let $X = \bA^\dagger Y$. So we have $\E XX\T = \bA^\dagger(\bA^\dagger)\T = \bM$ and $\E X \circ X \circ X = \bcT \times_1\bU\bU\T \times_2 \bU\bU\T \times_3 \bU\bU\T = \bcT$. 
    
    {\textbf{Necessity. \\} }
    Now prove the second part, i.e., for any random vector $X$ such that $\E(X)=0, \E(X X^\top) = \bM, \E(X \circ X\circ X)= \bcT$, we have $\bM$ is positive semidefinite with eigenvectors associated with nonzero eigenvalues as $[u_1, \ldots, u_m] = \bU$, and $\bcT \times_1 \bU_{\perp} = 0$. The argument about the $\bM$ is trivially true by the property of the covariance matrix. When $\operatorname{rank}(\bM) = p$, $\bU_{\perp}$ is null so the argument about the $\bcT$ is trivial. When $r:= \operatorname{rank}(M_2) < p$, we let $Y = \bA X \in \R^r$ where $\bA$ satisfies $\bA = (\bU \bSigma^{1/2})^\dagger = \bSigma^{-1/2} \bU\T$ and $\bM = \bU \bSigma \bU\T$ is the SVD. Then, we have $\bcT = \E X \circ X \circ X = \bcT \times_1\bU\bU\T \times_2 \bU\bU\T \times_3 \bU\bU\T$, which implies the argument about $\bcT$ is true. 
\end{proof}

\subsection{A lower bound}

In this section, we derive an additional lower bound for the estimation of third-order moments and cumulants, under a more restrictive and regular distributional class than that considered in Section \ref{sec_lower_bound}.

\begin{theorem}\label{thm_moment_cumulant_lower_bound}
Assume $p>C_1$ for some absolute constant $C_1$. Let $\mathcal{F} = \{X \in \R^p: \E X = 0, \E X X\T = 2\bI\}$. Then for any estimator $\hat\bcT$ from $n$ i.i.d. samples, we have
\[
    \max_{X\in\mathcal{F}} \E \left\|\hat\bcT- \E X^{\otimes 3} \right\| 
    \gtrsim c \sqrt{\frac{p}{n}} \land 1. 
\]
\end{theorem}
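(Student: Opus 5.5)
We will prove the bound by Fano's method over a packing of the sphere, as in Theorem~\ref{thm_lower_bound_cumulant_general_d}. The new requirement is that every hypothesis must have exactly $\E X=0$ and $\E XX^\top=2\bI$. For this reason the hypotheses cannot be Gaussian perturbations; they must be mixtures that shift only the third moment.

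\textbf{Construction.} Take a $1/4$-packing $\{v_1,\ldots,v_N\}\subset\mathbb S^{p-1}$ with $N\ge e^{cp}$. Let $Z\sim N(0,\bI_p)$, and let $W$ be a scalar independent of $Z$. Using part one of Lemma~\ref{lm:existence-w}, choose $W$ with $\E W=0$, $\E W^2=1$ and $\E W^3=\delta$, taking the two-point law with $b\asymp\delta$. For each $j$ set
\[
X^{(j)}=Z+W v_j+G_j ,
\]
where $G_j\sim N(0,\bI-v_jv_j^\top)$ is independent of $Z$ and $W$. Then $\E X^{(j)}=0$ and $\E X^{(j)}X^{(j)\top}=\bI+v_jv_j^\top+\bI-v_jv_j^\top=2\bI$, so $X^{(j)}\in\mathcal F$. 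By independence and because Gaussian third moments vanish, $\E (X^{(j)})^{\otimes3}=\delta\, v_j^{\otimes3}$.

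\textbf{Separation.} For $j\ne k$, testing against $u=v_j$ gives
\[
\|\delta(v_j^{\otimes 3}-v_k^{\otimes 3})\|\ge \delta\bigl(1-\langle v_j,v_k\rangle^3\bigr)\gtrsim\delta ,
\]
using that $\langle v_j,v_k\rangle\le 1-1/32$ on the packing.

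\textbf{KL control.} This is the main obstacle. Writing $X^{(j)}=Wv_j+N_j$, where $N_j=Z+G_j\sim N(0,2\bI-v_jv_j^\top)$, the law is a two-component Gaussian location mixture. Its density ratio against a common reference $Q$, namely the $\delta=0$ version of the same mixture, should satisfy
\[
\chi^2(P_j\|Q)\lesssim \delta^2 ,
\]
because the first two moments of $W$ match those of the $\delta=0$ law and the discrepancy enters only at third order. Concretely, the direction $v_j$ is common to $P_j$ and $Q_j$, so the comparison reduces to one dimension. There I would expand the Gaussian convolution in Hermite polynomials: the coefficients of orders $1$ and $2$ vanish, and the order-$3$ coefficient is proportional to $\E W^3-\E W_0^3=\delta$.

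The difficulty is that $Q_j$ depends on $j$. The $\delta=0$ law, with a symmetric two-point $W_0$, is not Gaussian, so the references for different $j$ differ. To avoid this I would instead let $W$ be a mixture of $N(0,1)$ and a small two-point perturbation. The cleaner alternative is to choose $W$ as $\sqrt{1-\epsilon}\,g+\sqrt{\epsilon}\,B$ with $g$ Gaussian. Then $Q$ can be taken as $N(0,2\bI)$ for every $j$, and
\[
\mathrm{KL}(P_j\|N(0,2\bI))\le\chi^2\lesssim \delta^2
\]
by the Hermite expansion, with the order-$3$ term dominating when $\delta\lesssim1$.

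\textbf{Conclusion.} The bound $n\,\mathrm{KL}\lesssim n\delta^2$ then holds. Choosing $\delta=c\sqrt{p/n}\wedge c$ makes $n\delta^2\le c'\log N$, and Fano gives risk $\gtrsim\delta$, i.e.\ $\gtrsim\sqrt{p/n}\wedge1$. The $\wedge 1$ cap comes from the requirement $|\delta|\lesssim 1$ needed for valid mixture weights. If the $\chi^2$ computation resists, I would fall back on Assouad over a hypercube $v_\sigma=\sigma/\sqrt p$, $\sigma\in\{\pm1\}^p$, since the pairwise KL computation there is identical.
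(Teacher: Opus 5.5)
Your construction is sound as far as the moments go: $X^{(j)}=Z+Wv_j+G_j$ has mean zero, covariance exactly $2\bI$ and third moment $\delta v_j^{\otimes 3}$, and the separation $\gtrsim\delta$ over the packing is correct. This is a genuinely different family from the paper's. The paper uses the $2p$-component mixture $\frac1{2p}\sum_l N(\sqrt p\,e_l,\bI+\Sigma^{(l)})+N(-\sqrt p\,e_l,\bI-\Sigma^{(l)})$ with small random $2\times2$ blocks $\Sigma^{(l)}$. There the third moment is linear in the $\Sigma^{(l)}$, because the sign of the mean shift is tied to the sign of the covariance perturbation. The KL is then bounded component by component through convexity, so only Gaussian-versus-Gaussian divergences of order $\varepsilon^2$ are needed.

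\textbf{The gap is in your KL step, which fails as written.} For $W=\sqrt{1-\epsilon}\,g+\sqrt{\epsilon}\,B$ one has $\E H_k(W)=\epsilon^{k/2}\E H_k(B)$, so
\[
\chi^2\bigl(W\,\|\,N(0,1)\bigr)=\sum_{k\ge 3}\frac{\epsilon^{k}\,(\E H_k(B))^2}{k!},\qquad \E H_4(W)=\epsilon^2(\E B^4-3)=\epsilon^2(\gamma^2-2),
\]
where $\gamma=\E B^3=\delta\epsilon^{-3/2}$, and $\E B^4=1+\gamma^2$ holds for any standardized two-point law. For fixed $\epsilon$ and $\delta\to0$, the fourth-order term tends to $\epsilon^4/6$. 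So it is false that the order-$3$ term dominates whenever $\delta\lesssim1$. Vanishing of the first two Hermite coefficients is not enough; every coefficient of order $\ge 4$ must also be $O(\delta)$.

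This is a real obstruction, not a loose bound. Each $P_j$ carries a fourth-cumulant spike $\epsilon^2(\gamma^2-2)v_j^{\otimes 4}$ of constant size. Hence for $n\gg p$ (up to logarithms) the index $j$ can be identified from fourth-order moments, and no Fano argument over this family, with any reference measure, can give $\sqrt{p/n}$. This is also the real reason the pure two-point version fails, and your hypercube fallback inherits the same defect.

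\textbf{The repair is to tie $\epsilon$ to $\delta$}, so that the Gaussian part absorbs everything beyond third order. Fix a bounded two-point $B$ with $\E B=0$, $\E B^2=1$, $\E B^3=1$, and set $\epsilon=\delta^{2/3}$. Then $\E W^3=\delta$, and Cram\'er's bound $|H_k(x)|\le C\sqrt{k!}\,e^{x^2/4}$ gives $\chi^2(W\|N(0,1))\le \delta^2/6+C\delta^{8/3}\lesssim\delta^2$. Alternatively, let $W$ have density $\phi(x)(1+\delta\psi(x))$, with $\psi$ bounded, $L^2(\phi)$-orthogonal to $1,x,x^2$, and $\E[g^3\psi(g)]=1$; then $\chi^2=\delta^2\E\psi(g)^2$ exactly.

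Since $P_j$ and $N(0,2\bI)$ are the images of $(W,N_j)$ and $(g,N_j)$ under the same map $(w,\xi)\mapsto wv_j+\xi$, data processing yields $\mathrm{KL}(P_j\|N(0,2\bI))\le\chi^2(W\|N(0,1))\lesssim\delta^2$ uniformly in $j$. Your Fano conclusion then goes through with $\delta=c\sqrt{p/n}\wedge c$. With this fix, your route gives a rank-one family and needs only a one-dimensional divergence computation. The paper's mixture avoids any non-Gaussian scalar, at the cost of a random block packing and the positivity constraints $\bI\pm\Sigma^{(l)}\succ 0$.
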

\begin{remark}
    Note that the random vectors in $\mathcal{F}$ in Theorem \ref{thm_moment_cumulant_lower_bound} is mean 0, which implies the stated rate $\sqrt{\frac{p}{n}} \land 1$ is the minimax lower bound for the estimation of order-3 moment, central moment, and cumulant. 
\end{remark}

\begin{proof}
    We define the following Gaussian mixture model
    \[
        X \sim \operatorname{GMM}\(\mu, (\bSigma_{i,j}^{(l)})_{i,j,l \in [p]}\) = \frac{1}{2p}\sum_{l \in [p]} N(\mu_l, \bI + \bSigma^{(l)}) + N(-\mu_l, \bI - \bSigma^{(l)}). 
    \]
    where $\mu_l \in \R^{p}$ with $(\mu_l)_i = \mu$ if $i = l$ and 0 otherwise. 
    
    By Theorem 4.1 in \cite{pereira_tensor_2022}, we have $\E X = 0$, $\E X X\T = \bI + \frac{1}{p}\sum_{l \in [p]} \mu_l \mu_l\T = (1+\frac{\mu^2}{p}) \bI$, and 
    \be
        (\E X \circ X \circ X)_{i,j,k} = \frac{1}{p} \sum_{l \in [p]} (\mu_l)_i \bSigma^{(l)}_{j,k} + (\mu_l)_j \bSigma^{(l)}_{i,k} + (\mu_l)_k \bSigma^{(l)}_{i,j} 
        = \frac{\mu}{p} \(\bSigma^{(i)}_{j,k} + \bSigma^{(j)}_{i,k} + \bSigma^{(k)}_{i,j}\). 
    \ee
    Now, for all $i \in [p]$, we let $\bSigma^{(i)}_{j,k} \sim N(0, \varepsilon^2)$ for $j,k \in \{1,2\}$ and otherwise 0. Generate $\mathcal{N} = \{(\bSigma^{(i)}_{j,k})_h; h \in [N]\}$ and denote $\bcSigma_h$ as the tensor with entry 
    $$(\bcSigma_h)_{i,j,k} = \((\bSigma^{(i)}_{j,k})_h + (\bSigma^{(j)}_{i,k})_h + (\bSigma^{(k)}_{i,j})_h\). $$ 
    Note that the entries $(i,j,k)$ of $\bcSigma_h - \bcSigma_q$ with $i\geq 2, j,k\in\{1,2\}$ are i.i.d. from $N(0, 2\varepsilon^2)$. Thus, we have
    \bea
        &&\PP\( \|\bcSigma_h - \bcSigma_q\| < c\varepsilon\sqrt{p} \) \notag \\
        &\leq & \PP\( z < c\varepsilon\sqrt{p-2} \) \notag\\
        & < & 2\exp\( -C (p-2)\) \notag\\
        & < & 2\exp\( -C p\), \notag
    \eea
    where $z \sim N(0, 2\varepsilon^2 \bI) \in \R^{p-2}$.

    Further note that $\| (\bSigma^{(i)})_h \| \leq 2|(\bSigma^{(i)}_{1,2})_h| \sim N(0,4\varepsilon^2)$ and hence $\PP\(\sum_{i \in [p]}\| (\bSigma^{(i)})_h \|/p > c \varepsilon\) \leq \exp(-Cp)$. Thus, by the union bound argument, we can generate $\mathcal{N}$ with $N = e^{Cp}$, such that for all $h,q \in [N]$, we have 
    \be\label{eq_3rd_tensor_diff}
        \|\bcSigma_h - \bcSigma_q\| > c\varepsilon\sqrt{p}, 
    \ee
    \[
        \operatorname{rank}((\bSigma^{(l)})_h) = 2, 
    \]
    and 
    \[
         \sum_{l \in [p]}\| (\bSigma^{(l)})_h \|/p < C\varepsilon. 
    \]
    Hence, we have $\sum_{l \in [p]}\| (\bSigma^{(l)})_h \|_F/p < C\varepsilon$. Thus, for $(\bSigma_{i,j}^{(l)})_{i,j,l \in [p]}, (\tilde\bSigma_{i,j}^{(l)})_{i,j,l \in [p]} \in \mathcal{N}$, 
    by Lemma \ref{lemma_KL_upper_bound_1} and \ref{lemma_KL_upper_bound_2}, we have
    \bea
        D\(\operatorname{GMM}\(\mu, (\bSigma_{i,j}^{(l)})_{i,j,l \in [p]}\) \| \operatorname{GMM}\(\mu, (\tilde\bSigma_{i,j}^{(l)})_{i,j,l \in [p]}\)\)
        &\leq & C\varepsilon^2, \label{eq_KL_bound}
    \eea
    for $\varepsilon < c$. 
    
    Note that if $X \sim \operatorname{GMM}\(\mu, (\bSigma_{i,j}^{(l)})_{i,j,l \in [p]}\)$ and $Y \sim \operatorname{GMM}\(\mu, (\tilde\bSigma_{i,j}^{(l)})_{i,j,l \in [p]}\)$, then by \eqref{eq_3rd_tensor_diff}, we have 
    \bea\label{eq_3rd_tensor_diff_2}
        \|\E X \circ X \circ X - \E Y \circ Y \circ Y\| \geq c \mu \varepsilon p^{-1/2}.   
    \eea

    Finally, by \eqref{eq_KL_bound}, \eqref{eq_3rd_tensor_diff_2}, and Lemma \ref{lemma_fano}, we have
    \[
        \max _j E_{\operatorname{GMM}_j} \left\|\hat{\theta}, \theta\left(\operatorname{GMM}_j\right)\right\| 
        \gtrsim \mu \varepsilon p^{-1/2} 
        \left(1-\frac{C_1 n \varepsilon^2 +\log 2}{C_2 p}\right),
    \]
    where $\theta\left(\operatorname{GMM}_j\right) = \E X \circ X \circ X $ and $X \sim \operatorname{GMM}_j$. 

    When $n > p$, we pick $\mu = \sqrt{p}$ and $\varepsilon = c\sqrt{p/n} < 1$. Then we have $\E X X\T = 2\bI$. Moreover, the lower bound follows:
    \[
        \max _j E_{\operatorname{GMM}_j} \left\|\hat{\theta}-\theta\left(\operatorname{GMM}_j\right)\right\| 
        \gtrsim \sqrt{\frac{p}{n}}. 
    \]

    When $n \leq p$, we let $\mu = \sqrt{p}$ and $\varepsilon = c$. Then we have $\E X X\T = 2\bI$. Moreover, the lower bound follows:
    \[
        \max _j E_{\operatorname{GMM}_j} \left\|\hat{\theta}-\theta\left(\operatorname{GMM}_j\right)\right\| 
        \gtrsim 1. 
    \]

\end{proof}

\section{Proofs}

\subsection{Proof of Theorem \ref{thm_lower_bound_cumulant_general_d}}

For moments: 

\begin{proof}
    Consider random vectors $X \sim N(\mu, \bI) \in \R^p$. Then by Theorem 3.1 in \cite{pereira_tensor_2022}, We have $\E X^{\otimes d} = \sum_{k=0}^{\lfloor d / 2\rfloor} K_{d, k} \operatorname{sym}\left(\mu^{\otimes d-2 k} \otimes \bI^{\otimes k}\right)$ with $K_{d, k}=\binom{d}{2 k} \frac{(2 k)!}{k!2^k}$ and $\operatorname{sym}$ defined as in \cite{pereira_tensor_2022}. 
    We generate $N = \exp(Cp)$ of vectors $\mu_1 \ldots \mu_N \in \mathbb{R}^{p}$ such that $\|\mu_i\| = \mu$, $\|\mu_i - \mu_j\| > 0.1 \mu$ and $\langle \mu_i,\mu_j \rangle > 0$ for any $i,j \in [N]$. By Lemma \ref{lemma_kronecker_lower_bound}, we have
    \[
        \|\mu_i^{\otimes d-2 k} \otimes \bI^{\otimes k} - \mu_j^{\otimes d-2 k} \otimes \bI^{\otimes k}\|
        \geq \|\mu_i^{\otimes d-2 k} - \mu_j^{\otimes d-2 k}\| 
        \geq \|\mu_i- \mu_j\| / 2, 
    \]
    which further implies
    \beas
        \|\E X^{\otimes d}  - \E Y^{\otimes d} \| 
        = \sum_{k=0}^{\lfloor d / 2\rfloor} K_{d, k} \( \operatorname{sym}\left(\mu_i^{\otimes d-2 k} \otimes \bI^{\otimes k}\right) - \operatorname{sym}\left(\mu_j^{\otimes d-2 k} \otimes \bI^{\otimes k}\right)\)
        \geq C_{d}\|\mu_i- \mu_j\|
    \eeas
    for $X \sim N(\mu_i, \bI)$ and $Y \sim N(\mu_j, \bI)$. We also have 
    \[
        D\(X \| Y\) = \|\mu_i- \mu_j\|^2. 
    \]
    Thus, Lemma \ref{lemma_fano} implies
    \[
        \max _{X \sim N(\mu_i, I), i\in [N]} \E_{X} \left\|\hat \bcT - \E X^{\otimes d}\right\| 
        \geq C_{d} \mu 
        \left(1-\frac{c_{d} n\mu^2 +\log 2}{p}\right). 
    \]
    Let $\mu = \sqrt{(n\land p)/n}$. Then we have $\|\mu_i\| = \mu < 1$ so that $\|v^\top X\|_{\Phi_2} < C$ for some absolute constant $C$, Moreover, we have
    \[
        \max _{X \sim N(\mu_i, I), i\in [N]} \E_{X} \left\|\hat \bcT - \E X^{\otimes d}\right\| 
        \geq C_{d} \sqrt{p/n} \land 1. 
    \]
    
\end{proof}

For cumulants: 

\begin{proof}
    We assume the following claim and prove it later: \\
    \textbf{Claim: }
    There exist random variables $X_{u,\theta}$ for $\theta \in (0, \theta_0]$ and $u \in \mathbb{S}^{p-1}$ for some constant $\theta_0$ which does not depend on $p$ and $u$, such that 
    \be\label{eq_X_orlicz_norm}
        \sup_{u \in \mathbb{S}^{p-1}, \theta \in (0, \theta_0]}\|X_{u,\theta}\|_{\Phi_2} \leq 2, 
    \ee
    \be\label{eq_X_cumulant}
        \|\bcK_d (X_{u,\theta}) - \bcK_d (X_{v,\theta})\| \geq C_1 \theta \|u^{\otimes d} - v^{\otimes d}\| - C_2 \theta^2,
    \ee
    and
    \be\label{eq_X_KL}
        \sup_{u \in \mathbb{S}^{p-1}, \theta \in (0, \theta_0]} D(X_{u,\theta} || X_{v,\theta}) \leq C_3 \theta^2\|u-v\|^2 + C_4 \theta^3 , 
    \ee
    where $C_1,C_2,C_3$, and $C_4$ are constants that only depend on $d$. 

    Now we prove the theorem. 
    For some $r > \exp(cp)$, there exist $\mathcal{U} = \{u_1, \ldots, u_r\}$ such that $\|u_i - u_j\| > 1/2$. We construct the following family of random vectors for $\theta \in (0, \theta_0]$:
    \[
        \mathcal{F}_\theta = \{X_{u,\theta}: u \in \mathcal{U}\}. 
    \]
    Now, combining Lemma \ref{lemma_kronecker_lower_bound}, Lemma \ref{lemma_fano}, \eqref{eq_X_orlicz_norm}, \eqref{eq_X_cumulant}, and \eqref{eq_X_KL}, it follows that for any $\theta \in (0, \theta_0]$, 
    \beas\label{eq_fano_1}
        &&\inf_{\hat \bcK }\max_{X \in \mathcal{F}}  \E (\hat \bcK - \bcK_d(X)) \\
        &\geq & \inf_{\hat \bcK }\max_{X \in \mathcal{F}_\theta}  \E (\hat \bcK - \bcK_d(X)) \\
        &\geq & (C_1 \theta \|u^{\otimes d} - v^{\otimes d}\| - C_2 \theta^2) \(1 - \frac{n C_3 \theta^2\|u-v\|^2 + C_4 \theta^3  + \log 2}{c p}  \)\\
        &\geq & \(\frac{1}{2} C_1 \theta \|u - v\| - C_2 \theta^2\) \(1 - \frac{n C_3 \theta^2\|u-v\|^2 + C_4 \theta^3  + \log 2}{c p}  \) \\
        &\geq & \(2^{-2} C_1 \theta - C_2 \theta^2\) \(1 - \frac{n 2^{-2} C_3 \theta^2 + C_4 \theta^3  + \log 2}{c p}  \). 
    \eeas
    We pick $\theta = c_0 (1 \land \sqrt{p/n})$ for some small enough $c_0$. The desired result follows. 
    
    \textbf{Proof of the Claim: }
    Now we prove the claim by constructing $X_{u,\theta}$ for $\theta \in (0, \theta_0]$ and $u \in \mathbb{S}^{p-1}$ and prove \eqref{eq_X_orlicz_norm}, \eqref{eq_X_cumulant}, and \eqref{eq_X_KL}. 
    
    Let $H_d$ be the probabilists' Hermite polynomial of degree $d$. For some $M > 0$, define
    $$
    h_M(t):=H_d(t) I_{\left\{\left|H_d(t)\right| \leq M\right\}} + M \operatorname{sign}(H_d(t)) I_{\left\{\left|H_d(t)\right| > M\right\}}. 
    $$

    Let $Z \sim P_0 = {N}\left(0, I_p\right)$ and, for each $u \in \mathbb{S}^{p-1}$ and $\theta \in \mathbb{R}$, define the tilted distribution $P_{u, \theta}$ by
    $$
    \frac{d P_{u, \theta}}{d P_0}(x)
    =\frac{\exp \(\theta h_M\(u\T x\)\)}{ \E_{P_0} \exp \(\theta h_M\(u\T Z\)\)}. 
    $$
    We let $X_{u, \theta} \sim P_{u, \theta}$. 

    \textbf{Proof of \eqref{eq_X_orlicz_norm}}. 
    Note that for any $s\in\R$ and $v\in\mathbb{S}^{p-1}$, we have 
    $$
        \E_{P_{u, \theta}} e^{s v\T X}=\frac{ \E_{P_0} \exp \left(\theta h_M\left(u\T Z\right)+s v\T Z\right)}{ \E_{P_0} \exp \left(\theta h_M\left(u\T Z\right)\right)} \leq \frac{e^{\theta M} \E_{P_0} e^{s v\T Z}}{e^{-\theta M}}=e^{2\theta M} e^{s^2 / 2} .
    $$
    Pick $\theta_{0, M}$ so that $e^{2\left|\theta_{0, M}\right| M} \leq e^{1 / 2}$. Then we have $\|v^{\top} X_{u, \theta}\|_{\psi_2} \leq 2$, which proves \eqref{eq_X_orlicz_norm} as long as $M$ is a constant only depending on $d$. 

    \textbf{Proof of \eqref{eq_X_cumulant}}. We decompose random vector $X_{u, \theta}$ as $X_{u, \theta} = Su + Z$ where $S = u\T X_{u, \theta} \in \R$ and $Z = (I - uu\T)X_{u, \theta}  \perp u$. 
    
    By calculating the joint density (with respect to Lebesgue measure) of $(S, Z)$, we can see that $Z \perp S$, $Z \sim N(0, (I - uu\T))$,
    and
    \[
        f_S(s) = e^{\theta h_M(s)} \phi(s) / \E_{U\sim N(0,1)} e^{\theta h_M(U)}.
    \]
    
    For $t\in\R^p$, we decompose $t = t_u u + t_\perp$. Then the cumulant generating function of $X_{u, \theta}$ is 
    \[
        \kappa_X(t) = \log \E \exp(t\T X_{u, \theta}) = \log \E \exp(t_u S) + \log \E \exp(t_\perp\T Z).
    \]
    Note that by $Z \sim {N}(0, (I - uu\T))$, we have $\E e^{t_{\perp}\T Z}=e^{\frac{1}{2} t_{\perp}\T (I - uu\T) t_{\perp}}=e^{\frac{1}{2}\left\|t_{\perp}\right\|^2}$. Thus, 
    \[
        \kappa_X(t) = \kappa_S(t_u) + {\frac{1}{2}\left\|t_{\perp}\right\|^2}. 
    \]
    For $d\geq 3$, by the chain rule of derivatives, we have
    \[
        \bcK_d(X_{u, \theta}) = \nabla^d\left[\kappa_S(\langle t, u\rangle)\right]|_{t=0} = \kappa_S^{(d)}(0) u^{\otimes d}. 
    \]
    Recall that $f_S(s) = e^{\theta h_M(s)} \phi(s) / \E_{U\sim N(0,1)} e^{\theta h_M(U)}$, which implies 
    \[
        \kappa_S(t) = \log \E_{U\sim N(0,1)} \exp(\theta h_M(U) + tU) - \log \E_{U\sim N(0,1)} \exp(\theta h_M(U)). 
    \]
    Thus, 
    \[
        \frac{\partial \kappa_S(t)}{\partial \theta} 
        = \frac{\E h_M(U) \exp(\theta h_M(U) + tU)}{\E \exp(\theta h_M(U) + tU)} - \frac{\E h_M(U) \exp(\theta h_M(U))}{\E \exp(\theta h_M(U))}.
    \]
    At $\theta = 0$, we have
    \[
        \kappa_S(t) |_{\theta = 0} = \log \E \exp(tU),
    \] and
    \[
        \frac{\partial \kappa_S(t)}{\partial \theta} |_{\theta = 0}
        = \frac{\E h_M(U) \exp(tU)}{\E \exp(tU)} - \E h_M(U). 
    \]
    Additionally, we have
    \[
        \frac{d^d}{dt^d} \log \E \exp(tU) |_{t = 0}
        = 0, \quad \text{for $d \geq 3$},
    \]
    and
    \beas
        &&\frac{d^d}{dt^d} \frac{\E h_M(U) \exp(tU)}{\E \exp(tU)} |_{t = 0} \\
        &= & \frac{d^d}{dt^d} ({\E h_M(U) \exp(tU - t^2/2)}) |_{t = 0} \\
        &= & \E h_M(U) \frac{d^d}{dt^d} (\exp(tU - t^2/2)) |_{t = 0} \\
        &= & \E h_M(U) H_d(U)\\
        &:= & C_{M,d}.
    \eeas
    Note that $C_{M,d} = \E (H_d(U)^2 I_{\{ H_d(U) < M\}} + M \operatorname{sign}(H_d(U)) H_d(U)) \rightarrow \E H_d(U)^2 $ as $M\rightarrow \infty$, so that for large enough $M$ which only depends on $d$, $C_{M,d}$ is greater than 0. By Taylor's expansion, we have
    \bea
        \bcK_d(X_{u, \theta}) 
        &=& \frac{\partial^d}{\partial t^d} \kappa_S(t)|_{t = 0} u^{\otimes d} \notag\\
        &=& u^{\otimes d} \(\theta \frac{\partial^{d+1}}{\partial \theta \partial t^d } \kappa_S(t)|_{t =\theta= 0} + O(\theta^2)\)\notag\\
        &=& u^{\otimes d} \(\theta \frac{\partial^{d+1}}{\partial t^d \partial \theta } \kappa_S(t)|_{t =\theta= 0} + O(\theta^2)\)\notag\\
        &=&  (C_{M,d}\theta + O(\theta^2))u^{\otimes d}. \label{eq_hermit_tilt_cumulant}
    \eea
    Thus, this implies \eqref{eq_X_cumulant}. 

    \textbf{Proof of \eqref{eq_X_KL}}. Denote $A_u(\theta) = \log \E_{U\sim N(0,I)} \exp(\theta h_M(u\T U))$. We have
    \[
        D(X_{u,\theta} || X_{v,\theta}) 
        = \E_{X \sim P_{u, \theta}} \theta (h_M(u\T X) - h_M(v\T X)) - A_u(\theta)+ A_v(\theta). 
    \]
    By Taylor expansion at $\theta = 0$, we have
    \[
        A_u(\theta) = \theta \E h_M(u\T U) + 0.5 \theta^2 \Var(h_M(u\T U)) + O(\theta^3), 
    \] 
    \beas
        \phi_{u,v}(\theta) 
        &:= &\E_{X \sim P_{u, \theta}}  h_M(v\T X) \\
        &= &\frac{\E h_M(v\T U) e^{\theta h_M(u\T U)}}{\E e^{\theta h_M(u\T U)}} \\
        &= & \E h_M(v\T U) + \theta \Cov\(h_M(v\T U) , h_M(u\T U)\) + O(\theta^2). 
    \eeas
    and
    \[
        \phi_{u,u}(\theta) = \E h_M(u\T U) + \theta \Var\(h_M(u\T U)\) + O(\theta^2). 
    \]
    Hence, we have
    \beas
        D(X_{u,\theta} || X_{v,\theta}) 
        &=& \frac{1}{2} \theta^2 \Var\(h_M(u\T U) - h_M(v\T U)\) + O(\theta^2). 
    \eeas
    Further note that there exist some $L_{d,M}$ such that $h_M(a) - h_M(b) \leq L_{d,M}|a-b|$. Thus, we have
    \[
        \Var\(h_M(u\T U) - h_M(v\T U)\) \leq  L_{d,M}^2 \E \(U\T (u-v)\)^2 = L_{d,M}^2 \|u-v\|^2. 
    \]
    Thus, we finally have
    \[
        D(X_{u,\theta} || X_{v,\theta}) \leq C_{d,M}^2 \theta^2 \|u-v\|^2 + O(\theta^3),
    \]which proves \eqref{eq_X_KL}. 
    
\end{proof}

\subsection{Proof of Theorem \ref{thm_lower_bound_detection}}

\begin{proof}
We use the standard testing form of Fano's lemma: there exist absolute constants $\alpha_0,c_0>0$ such that if probability measures $Q_0,Q_1,\ldots,Q_N$ satisfy
\[
\frac1N\sum_{i=1}^N D(Q_i\|Q_0)\le \alpha_0 \log N,
\]
then
\[
\inf_\psi
\left\{
Q_0(\psi=1)+\frac1N\sum_{i=1}^N Q_i(\psi=0)
\right\}
\ge c_0.
\]
In particular,
\[
\inf_\psi
\left\{
Q_0(\psi=1)+\max_{1\le i\le N}Q_i(\psi=0)
\right\}
\ge c_0.
\]

We prove the cumulant and moment cases separately.

\medskip
\noindent
\textbf{Cumulants.}
Let $P_0$ be the law of $Z\sim N(0,I_p)$. Since $d\ge 3$, we have $\bcK_d(Z)=0$, and clearly $P_0\in\mathcal F$.

Next, let $\mathcal U=\{u_1,\ldots,u_N\}\subset \mathbb S^{p-1}$ be such that
\[
\log N \ge c p
\]
for some absolute constant $c>0$; for example, one may take a maximal $1/2$-packing of $\mathbb S^{p-1}$.

Now use the tilted family $X_{u,\theta}$ constructed in the proof of Theorem \ref{thm_lower_bound_cumulant_general_d}. By the proof of $\eqref{eq_X_orlicz_norm}$, for every $v\in\mathbb S^{p-1}$ we have $|v^\top X_{u,\theta}|{\Phi_2}\le 2$, hence $X_{u,\theta}\in\mathcal F$. 
Moreover, by $\eqref{eq_hermit_tilt_cumulant}$,
\[
\bcK_d(X_{u,\theta})
=
\big(C_{M,d}\theta+O(\theta^2)\big)u^{\otimes d},
\]
where $C_{M,d}>0$ once $M=M(d)$ is chosen sufficiently large. Since $\|u^{\otimes d}\|=1$, it follows that for all sufficiently small $\theta$,
\[
\|\bcK_d(X_{u,\theta})\|
\ge c_1 \theta
\]
for some constant $c_1>0$ depending only on $d$. Therefore, if we set
\[
\rho_*:=\frac{c_1}{2}\theta,
\]
then
\[
X_{u_i,\theta}\in H_1^{(K)}(\rho_*),\qquad i=1,\ldots,N.
\]

It remains to bound the KL divergence to the null. By construction,
\[
\frac{dP_{u,\theta}}{dP_0}(x)
=
\frac{\exp\big(\theta h_M(u^\top x)\big)}
{\E_{P_0}\exp\big(\theta h_M(u^\top Z)\big)}.
\]
Let
\[
A(\theta):=\log \E_{U\sim N(0,1)} e^{\theta h_M(U)}.
\]
Since $u^\top Z\sim N(0,1)$, this does not depend on $u$. Also,
\[
\log \frac{dP_{u,\theta}}{dP_0}(x)
=
\theta h_M(u^\top x)-A(\theta).
\]
Hence
\[
D(P_{u,\theta}\|P_0)
=
\E_{P_{u,\theta}}\big[\theta h_M(u^\top X)-A(\theta)\big]
=
\theta A'(\theta)-A(\theta).
\]
Because $h_M$ is bounded, $A$ is analytic near $0$, so
\[
A(\theta)=A'(0)\theta+\frac12 A''(0)\theta^2+O(\theta^3),
\qquad
A'(\theta)=A'(0)+A''(0)\theta+O(\theta^2).
\]
Therefore,
\[
D(P_{u,\theta}\|P_0)
=
\frac12 A''(0)\theta^2+O(\theta^3)
\le C_2\theta^2
\]
for all sufficiently small $\theta$, uniformly over $u\in\mathbb S^{p-1}$. By tensorization,
\[
D(P_{u,\theta}^{\otimes n}\|P_0^{\otimes n})
=
nD(P_{u,\theta}\|P_0)
\le C_2 n\theta^2.
\]

Choose
\[
\theta=c_2\Big(\sqrt{p/n}\wedge 1\Big)
\]
with $c_2>0$ sufficiently small so that $\theta\le \theta_0$ and
\[
C_2 n\theta^2\le \alpha_0 \log N.
\]
This is possible since $\log N\ge cp$. Hence
\[
\frac1N\sum_{i=1}^N D(P_{u_i,\theta}^{\otimes n}\|P_0^{\otimes n})
\le \alpha_0 \log N.
\]
Applying Fano's lemma, we obtain
\[
\inf_\psi
\left\{
P_0^{\otimes n}(\psi=1)+\max_{1\le i\le N}P_{u_i,\theta}^{\otimes n}(\psi=0)
\right\}
\ge c_0.
\]
Since $P_0\in H_0^{(K)}$ and $P_{u_i,\theta}\in H_1^{(K)}(\rho_*)$, it follows that
\[
\mathfrak R_n^{(K)}(\rho_*)\ge c_0.
\]

Finally, if $\rho\le \rho_*$, then $H_1^{(K)}(\rho_*)\subseteq H_1^{(K)}(\rho)$, so the testing problem with threshold $\rho$ is harder. Therefore
\[
\mathfrak R_n^{(K)}(\rho)\ge \mathfrak R_n^{(K)}(\rho_*)\ge c_0.
\]
Since $\rho_*\asymp \theta\asymp \sqrt{p/n}\wedge 1$, this proves the cumulant claim.

\medskip
\noindent
\textbf{Moments (odd $d$).}
Assume now that $d$ is odd. Again let $P_0$ be the law of $Z\sim N(0,I_p)$. Since $Z$ is symmetric and $d$ is odd,
\[
\E Z^{\otimes d}=0,
\]
so $P_0\in H_0^{(M)}$.

We use the same tilted family $X_{u,\theta}$. Fix $u\in\mathbb S^{p-1}$, and let
\[
S:=u^\top X_{u,\theta}.
\]
From the construction in the proof of Theorem \ref{thm_lower_bound_cumulant_general_d}, $S$ has density
\[
f_S(s)
=
\frac{e^{\theta h_M(s)}\phi(s)}
{\E_{U\sim N(0,1)}e^{\theta h_M(U)}},
\]
where $\phi$ is the standard Gaussian density. Therefore,
\[
\E S^d
=
\frac{\E\big[U^d e^{\theta h_M(U)}\big]}
{\E e^{\theta h_M(U)}}.
\]
Since $h_M$ is bounded, the numerator and denominator are analytic in $\theta$ near $0$, and because $d$ is odd, $\E U^d=0$. Hence
\[
\E S^d
=
\theta \E\big[U^d h_M(U)\big]+O(\theta^2).
\]
Also, $h_M\to H_d$ as $M\to\infty$, and
\[
\E\big[U^d H_d(U)\big]=d!>0.
\]
Thus, by choosing $M=M(d)$ sufficiently large, we have
\[
B_{M,d}:=\E\big[U^d h_M(U)\big]>0.
\]
Therefore
\[
\E S^d
=
B_{M,d}\theta+O(\theta^2),
\]
and for all sufficiently small $\theta$,
\[
|\E S^d|\ge c_3\theta
\]
for some constant $c_3>0$ depending only on $d$.

Now by the definition of tensor spectral norm,
\[
\|\E X_{u,\theta}^{\otimes d}\|
=
\sup_{\|v_1\|=\cdots=\|v_d\|=1}
\left|
\left\langle
\E X_{u,\theta}^{\otimes d},
\,v_1\otimes\cdots\otimes v_d
\right\rangle
\right|.
\]
Choosing $v_1=\cdots=v_d=u$ gives
\[
\|\E X_{u,\theta}^{\otimes d}\|
\ge
\left|
\left\langle
\E X_{u,\theta}^{\otimes d},
\,u^{\otimes d}
\right\rangle
\right|
=
|\E\langle u,X_{u,\theta}\rangle^d|
=
|\E S^d|
\ge c_3\theta.
\]
Hence, if we define
\[
\rho_*':=\frac{c_3}{2}\theta,
\]
then
\[
X_{u_i,\theta}\in H_1^{(M)}(\rho_*'),\qquad i=1,\ldots,N.
\]

The KL bound to the null is exactly the same as above:
\[
D(P_{u,\theta}^{\otimes n}\|P_0^{\otimes n})
\le C_2 n\theta^2.
\]
With the same choice
\[
\theta=c_2\Big(\sqrt{p/n}\wedge 1\Big),
\]
Fano's lemma yields
\[
\mathfrak R_n^{(M)}(\rho_*')\ge c_0.
\]
By monotonicity in $\rho$, for every $\rho\le \rho_*'$,
\[
\mathfrak R_n^{(M)}(\rho)\ge \mathfrak R_n^{(M)}(\rho_*')\ge c_0.
\]
Since $\rho_*'\asymp \theta\asymp \sqrt{p/n}\wedge 1$, this proves the moment claim.

Setting $\eta_d=\eta_d':=c_0$ and adjusting the constants $c_d,c_d'>0$ completes the proof.
\end{proof}

\subsection{Proof of Theorem \ref{thm_upper_bound_sample_cumulants}}
We first prove the result for the raw moments. 
\begin{proof} \textbf{for the raw moments.}

By Lemma \ref{lemma_symmetric_tensor_approximation}, we have 
\beas
    &&\left\|\frac1n \sum_{i=1}^n X_i^{\otimes d} -\E (X_i^{\otimes d}) \right\| \\
    &= & \sup_{u \in \mathbb{S}^{p-1}} \left| \left< \frac1n \sum_{i=1}^n X_i^{\otimes d} -\E (X_i^{\otimes d}) , u^{\otimes d} \right> \right| \\
    & = & \sup_{u \in \mathbb{S}^{p-1}} \left| \frac1n \sum_{i=1}^n (X_i\T u)^d -\E (X_i\T u)^d \right|. 
\eeas
For any given $u$, we know that $X_i\T u$ is $\beta-$subexponential random vector. By definition, we have $\E\exp(|u^\top X|^{\beta}/t^{\beta})\le 2$ for $t \geq \|v^\top X\|_{\Phi_\beta}$, which implies $\E\exp(|(u^\top X)^d|^{\beta/d}/t^{\beta})\le 2$. Thus, $(X_i\T u)^d$ is $\beta/d-$subexponential random vector with $\|(X_i\T u)^d\|_{\Phi_{\beta/d}} = \|X_i\T u\|_{\Phi_{\beta}}^d \leq 1$, which further implies $(X_i\T u)^d -\E (X_i\T u)^d$ is a $\beta/d-$subexponential random vector. By Lemma A.1 and A.3 in \cite{gotzeConcentrationInequalitiesPolynomials2021}, we have $\|(X_i\T u)^d -\E (X_i\T u)^d\|_{\Phi_{\beta/d}} \leq M_{\beta, d} \|X_i\T u\|_{\Phi_{\beta}}^d \leq M_{\beta, d}$. 

Hence, when $\beta \leq d$, by Lemma \ref{lemma_subexpo_concentration}, we have
\[
    \PP \left( \left| \sum_{i=1}^n (X_i\T u)^d -\E (X_i\T u)^d \right| > t \right) 
    \leq 2\exp\( -C_{\beta, d} \min\left\{ t^2/n, t^{\beta/d} \right\} \). 
\]
Note that the same bound hold for $\beta > d$, whereas $(X_i\T u)^d -\E (X_i\T u)^d$ is subexponential. 

We next apply the $\varepsilon-$net arguments. 
Notice that 
\beas
    &&\left| \left< \frac1n \sum_{i=1}^n X_i^{\otimes d} -\E (X_i^{\otimes d}) , u^{\otimes d} - v^{\otimes d} \right> \right| \\
    &\leq & \left\|\frac1n \sum_{i=1}^n X_i^{\otimes d} -\E (X_i^{\otimes d}) \right\| \|u^{\otimes d} - v^{\otimes d}\|_*
\eeas
where $\|\cdot\|_*$ is the tensor nuclear norm, which is the dual norm of the tensor spectral norm. 
Additionally, for $\|u\| = \|v\| = 1$, we have
\beas
    \|u^{\otimes d} - v^{\otimes d}\|_* 
    &\leq& \|(u-v) \circ u \circ \ldots \circ u\|_* + \ldots + \|v \circ v \circ \ldots \circ(u-v)\|_* \\
    &\leq& d \|u-v\|
\eeas
Let $\mathcal N$ be an $\varepsilon-$net of $\mathbb{S}^{p-1}$ with $\varepsilon = 1/(2d)$ and $|\mathcal N| < \exp(C_d p)$. Then by union bound, we have
\[
    \PP \left( \sup_{u \in \mathcal N} \frac1n \left| \sum_{i=1}^n (X_i\T u)^d -\E (X_i\T u)^d \right| > t \right) 
    \leq  2 \exp\( C_d p-C_{\beta, d} \min\left\{ t^2/n, t^{\beta/d} \right\} \). 
\]
Further note that for any $u\in \mathbb S^{p-1}$, there exists $v\in \mathcal{N}$ such that $\|u-v\| \leq \varepsilon = 1/(2d)$. Thus, it follows that 
\beas
    && \left| \frac1n \sum_{i=1}^n (X_i\T u)^d -\E (X_i\T u)^d \right| \\ 
    & = & \left| \left< \frac1n \sum_{i=1}^n X_i^{\otimes d} -\E (X_i^{\otimes d}) , u^{\otimes d} \right> \right| \\
    & \leq & \left| \left< \frac1n \sum_{i=1}^n X_i^{\otimes d} -\E (X_i^{\otimes d}) , u^{\otimes d} - v^{\otimes d} \right> \right| + \left| \left< \frac 1n \sum_{i=1}^n X_i^{\otimes d} -\E (X_i^{\otimes d}) , v^{\otimes d} \right> \right| \\
    & \leq & d \|u-v\| \left\|\frac1n \sum_{i=1}^n X_i^{\otimes d} -\E (X_i^{\otimes d}) \right\| + \left| \left< \frac 1n \sum_{i=1}^n X_i^{\otimes d} -\E (X_i^{\otimes d}) , v^{\otimes d} \right> \right| \\
    & \leq & \frac{1}{2} \left\|\frac1n \sum_{i=1}^n X_i^{\otimes d} -\E (X_i^{\otimes d}) \right\| + \left| \left< \frac 1n \sum_{i=1}^n X_i^{\otimes d} -\E (X_i^{\otimes d}) , v^{\otimes d} \right> \right|,
\eeas
which by taking the maximum over $u\in \mathbb S^{p-1}$ further implies
\[
    \sup_{u \in \mathbb{S}^{p-1}} \frac1n \left| \sum_{i=1}^n (X_i\T u)^d -\E (X_i\T u)^d \right|
    \leq 2 \sup_{u \in \mathcal N} \frac1n \left| \sum_{i=1}^n (X_i\T u)^d -\E (X_i\T u)^d \right|.
\]

Hence, we have
\[
    \PP \left( \sup_{u \in \mathbb{S}^{p-1}} \frac1n \left| \sum_{i=1}^n (X_i\T u)^d -\E (X_i\T u)^d \right| > t \right) 
    \leq  2 \exp\( C_d p-C_{\beta, d} \min\left\{ t^2n, (tn)^{\beta/d} \right\} \). 
\]
Take $t = C_{\beta, d} \max\{ p^{d/\beta} / n, \sqrt{p/n} \}$, the upper bound for the raw moments follows. 
\end{proof}
For simplicity, we prove the upper bound for the central moments when $d = 3$. For general $d$, one only needs to replace \eqref{eq_expansion_central_moments} with the order-$d$ expansion and bound each term using the same arguments as in the following proof. 

\begin{proof} \textbf{for the central moments.}

By Lemma \ref{lemma_symmetric_tensor_approximation}, we have 
\bea
    &&\left\|\frac1n \sum_{i=1}^n (X_i - \bar X)^{\otimes 3} -\E (X_i - \mu)^{\otimes 3} \right\| \notag\\
    & = & \sup_{u \in \mathbb{S}^{p-1}} \left| \frac1n \sum_{i=1}^n ((X_i - \bar X)\T u)^3 -\E ((X_i - \mu)\T u)^3 \right|\notag\\
    & \leq & \sup_{u \in \mathbb{S}^{p-1}} \left| \frac1n \sum_{i=1}^n ((X_i - \mu)\T u)^3 -\E ((X_i - \mu)\T u)^3 \right| \notag\\
    && + 3 \sup_{u \in \mathbb{S}^{p-1}} ((\bar X - \mu)\T u)\frac1n\sum_{i=1}^n ((X_i - \mu)\T u)^2 \notag\\
    && + 2 \sup_{u \in \mathbb{S}^{p-1}} ((\bar X - \mu)\T u)^3 \label{eq_expansion_central_moments}. 
\eea

The first term can be controlled by the arguments used in the proof for the raw moments. Following the same arguments, we also have
\[
    \PP \( \sup_{u \in \mathbb{S}^{p-1}} |(\bar X - \mu)\T u| \leq C_\beta \max\{\sqrt{p/n}, (p/n)^{1/\beta} \} \)
    \geq 1 - \exp(-c_\beta p),
\]
and
\beas
    &&\PP \( \sup_{u \in \mathbb{S}^{p-1}}\left| \frac1n\sum_{i=1}^n ((X_i - \mu)\T u)^2- \E ((X_i - \mu)\T u)^2\right| \leq C_\beta \max\{\sqrt{p/n}, p^{2/\beta}/n \} \) \\
    &\geq & 1 - \exp(-c_\beta p). 
\eeas
Moverover, we have $\sup_{u \in \mathbb{S}^{p-1}} \E ((X_i - \mu)\T u)^2 \leq C_\beta$. Thus, with probability greater than $1 - \exp(-c_\beta p)$, we have
\[
    \sup_{u \in \mathbb{S}^{p-1}} \frac1n\sum_{i=1}^n ((X_i - \mu)\T u)^2 
    \leq C_\beta(1 + \max\{\sqrt{p/n}, p^{2/\beta}/n \}), 
\]
and hence with the same probability, we have
\beas
    &&\sup_{u \in \mathbb{S}^{p-1}} ((\bar X - \mu)\T u)\frac1n\sum_{i=1}^n ((X_i - \mu)\T u)^2 \\
    &\leq &  C_\beta \max\{\sqrt{p/n}, (p/n)^{1/\beta} \} (1 + \max\{\sqrt{p/n}, p^{2/\beta}/n \}) \\
    &\leq &  C_\beta \max\{\sqrt{p/n}, p^{3/\beta}/n \}
\eeas
and
\[
    \sup_{u \in \mathbb{S}^{p-1}} ((\bar X - \mu)\T u)^3 
    \leq C_\beta \max\{(p/n)^{3/2}, (p/n)^{3/\beta} \}
    \leq C_\beta (p/n)^{3/\beta} 
    \leq C_\beta p^{3/\beta} /n
\]
Thus, combining all, the desired bound follows. 
\end{proof}

\subsection{Proof of Theorem \ref{thm_rate_op_moment_multilinear_form_estimator}}
\begin{proof}
    First, note that by Theorem \ref{thm_upper_bound_sample_cumulants}, we have when $p^{2d/\beta - 1}<n$, it follows
    \[
        \left\| \hat \bcM -\bcM \right\|\leq C_{\beta, d} \sqrt{\frac{p}{n}}.
    \]
    Furthermore, when $n<p$, we have $1 < \max\left\{ \frac{p^{3/\beta}}{n}, \sqrt{\frac{p}{n}}\right\}$. Thus, it follows
    \[
        \left\| \hat \bcM -\bcM  \right\| = \|\E (X_i^{\otimes d})\| \leq C_{\beta, d}.
    \]
    
    It remains to prove for $p<n<p^{2d/\beta - 1}$. In this case, we have 
    \beas
        &&\left| \langle \hat \bcM - \bcM, \bigotimes_{j\in[d]} u_j\rangle \right|\\
        &\leq & \left| \langle \E\hat \bcM - \E X_i^{\otimes d}, \bigotimes_{j\in[d]} u_j\rangle \right| 
        + \left| \langle \hat \bcM - \E \hat \bcM , \bigotimes_{j\in[d]} u_j\rangle \right|.
    \eeas

    For the bias term, we have 
    \beas
        \left| \langle \E\hat \bcM - \E X_i^{\otimes d}, \bigotimes_{j\in[d]} u_j\rangle \right| 
        &=& \left| \langle \E \otimes_{j\in[d]}X_iI_{\{|X_i\T u_j| \leq R\}} - \E X_i^{\otimes d}, \bigotimes_{j\in[d]} u_j\rangle \right| \\
        & = & \left| \langle  \E X_i^{\otimes d} (1 - I_{\{|X_i\T u_j| \leq R, j\in[d]\}}), \bigotimes_{j\in[d]} u_j\rangle \right| \\
        & = & \left|  \E (1 - I_{\{|X_i\T u_j| \leq R, j\in[d]\}}) \prod_{j \in [3]} |X_i\T u_j| \right| \\
        &\leq & \sqrt {\E  \prod_{j \in [d]} |X_i\T u_j|^2 \E (1 - I_{\{|X_i\T u_j| \leq R, j\in[d]\}})  } \\
        &\leq & C_{\beta,d} \sqrt{\sum_{j\in[d]} \PP\( |X_i\T u_j| > R\)} \\
        &\leq & C_{\beta,d}  \exp(-c_\beta R^{\beta}). 
    \eeas

    For the variance term, let $Y_{ij} = X_i I_{\{|X_i\T u_j| \leq R\}}$ and $Z_i = \prod_{j\in[d]} Y_{ij}\T u_j$. Then we have
    \[
        Z_i \leq R^d,
    \]
    and
    \beas
        \left| \langle \hat \bcM - \E \hat \bcM , \bigotimes_{j\in[d]} u_j\rangle \right|
        &=& \left| \frac{1}{n} \sum_{i = 1}^n  Z_i - \E Z_i\right|. 
    \eeas
    This implies
    \[
        \PP\(\left| \frac1n \sum_{i=1}^n Z_i - \E Z_i \right| > t\) \leq C \exp\(-cnt^2/R^{2d}\).
    \]
    Let $t = R^d\sqrt{p/n}$, it follows
    \[
        \PP\(\left| \langle \hat \bcM - \E \hat \bcM , \bigotimes_{j\in[d]} u_j\rangle \right| > R^d\sqrt{ p/n} \) \leq C \exp\(-cp\). 
    \]
    The desired result for $p<n<p^{2d/\beta - 1}$ follows by combining the bound for bias and the variance.

\end{proof}

\subsection{Proof of Theorem \ref{thm_rate_op_cumulant_estimator}}

The first statement, i.e., the high probability bound for $\widehat \bcM$, immediately follows from the discussion in Section \ref{sec_optimal_estimator}. 
The statement about expectation follows as $\|\widehat\bcM\|$ and $\|\E X^{\otimes d}\|$ are both upper bounded. 
Finally, for cumulant tensor estimation, by \eqref{eq_relation_cumulant_moment} and the fact that $F_d$ is a fixed tensor polynomial in the lower-order moments, with degree depending only on $d$, the results follow. 

\subsection{Proof of Lemma \ref{lemma_ldp}}

We next calculate $\|L_{\leq D}\|^2 := \|L_{\leq D}\|^2_{L^2(H_0)}$. We use the following notations: for vector $\alpha = (\alpha_1, \ldots, \alpha_m), \alpha_k \in \mathbb{N}$, denote $|\alpha| = \sum_{k\in [m]} \alpha_k$, $\alpha! = \prod_{k\in [m]} \alpha_k!$, and multivariate Hermite polynomials $H_\alpha(x) = \prod_{k\in [m]} H_{\alpha_i}(x_i)$. For vector $u\in\R^{m}$, let $u^{\alpha} = \prod_{k\in [m]} u_k^{\alpha_k}$. For a $np$-dimensional vector $y$, we use denote $y = (y^{(1)}, \ldots, y^{(n)})\T$ with $y^{(m)} \in \R^{p}$.

Denote $Y \in\R^{np} \sim \mathcal{P}$ under $H_1$ and $Y \in\R^{np} \sim \mathcal{Q} = N(0, I_{np})$ under $H_0$. 
Under $H_1$, we further denote $Y = (Y^{(1)}, \ldots, Y^{(n)})\T$ and $Y^{(i)} = S (\sqrt{a/p} W_i U + Z_i)$. We let $V = (U, \ldots, U) \in \R^{np}$ and all entries of $U$ are i.i.d. Rademacher. 
Then we have (see, e.g., Lemma B.5 in \cite{szekely_learning_2024} for detailed calculation) 
\bea
   \|L_{\leq D}\|^2 
   &=& \sum_{|\alpha| \leq D} \frac{(\E_{Y\sim \mathcal{P}} H_{\alpha}(Y) )^2}{\alpha!} \notag\\
   &=& \sum_{|\alpha| \leq D} \frac{1}{\alpha!} \(\E_{U} \(\prod_{i \in [n]} \E_{X|U}( H_{\alpha^{(i)}}(SX) |U) \)\)^2  \notag\\
   &=& \sum_{|\alpha| \leq D} \frac{\( \prod_{i\in [n]}  \(\frac{a}{1+a}\)^{|\alpha^{(i)}|/2} \E_{W_1}(H_{|\alpha^{(i)}|}(W_1) )\)^2}{\alpha! p^{|\alpha|}} \(\E_{V} V^{\alpha}\)^2, \label{eq_LD_bound1}
\eea
where the last equality holds by Lemma B.11 in \cite{szekely_learning_2024}. 
Note that we have
\be\label{eq_W_hermite_0}
    \E H_{m}(W_1) = 0 \quad \text{for $m \leq d-1$}. 
\ee
and
\be\label{eq_exp_bound}
    \E_{V} V^{\alpha} 
    = \E_U (\prod_{i\in[n]} \prod_{j \in [p]} u_j^{\alpha^{(i)}_j})
    = \prod_{j \in [p]} \E_{u_j} u_j^{\sum_{i\in[n]}\alpha^{(i)}_j}
    = I_{\{\sum_{i\in[n]}\alpha^{(i)}_j\text{ is even for all $j\in[p]$}\}}, 
\ee
where $\alpha^{(i)}_j$ is the $j$th entry of vector $\alpha^{(i)}$ and $u_i$ is the $i$th entry of $U$. 

Denote 
\[
    \mathcal{A}_m = \left\{\alpha \in \mathbb{N}^{np}: |\alpha| = m, |\alpha^{(j)}| = 0\text{ or }|\alpha^{(j)}| \geq d,  \sum_{i\in[n]}\alpha^{(i)}_j\text{ is even for all $j\in[p]$}\right\}. 
\]and
\[
    \mathcal{B}_m = \left\{\alpha \in \mathbb{N}^{n \times p}: |\alpha| = m, \text{at most $m/d$ rows and $m/2$ columns are non-zero}\right\}.
\]
Thus, we have
\[
    |\mathcal{A}_m| \leq |\mathcal{B}_m| = \binom{n}{\lfloor m/d\rfloor} \binom{p}{\lfloor m/2\rfloor} \binom{\lfloor m/2\rfloor \lfloor m/d\rfloor + m - 1}{m}. 
\]
Note that $m \leq D \ll \min{p, n}$. Thus, we have
\[
    \binom{n}{k} \leq n^{k}\quad \text{ and } \quad \binom{\lfloor m/2\rfloor \lfloor m/d\rfloor + m - 1}{m} \leq (m^2/8)^m, 
\]which implies
\be\label{eq_set_size}
    |\mathcal{A}_m| \leq n^{m/d} p^{m/2} (m^2/8)^m.
\ee

Moreover, by Lemma \ref{lemma_existance_1d}, we have
\be\label{eq_W_cumulant_bound_1}
    \sup_{k\leq m} \(\E_{W_1}(H_{k}(W_1) )\)^{2m/k}
    \leq \sup_{k\leq m} (k! C_d^k)^{2m/k}
    \leq C_d^{2m} m^{2m}. 
\ee

Finally, \eqref{eq_LD_bound1}, \eqref{eq_W_hermite_0}, \eqref{eq_exp_bound}, \eqref{eq_set_size}, and \eqref{eq_W_cumulant_bound_1} imply 
\beas
    \|L_{\leq D}\|^2 
   &=& \sum_{0\leq m \leq D} \sum_{\alpha \in \mathcal{A}_m} \frac{\( \prod_{i\in [n]}  \(\frac{a}{1+a}\)^{|\alpha^{(i)}|/2} \E_{W_1}(H_{|\alpha^{(i)}|}(W_1) )\)^2}{\alpha! p^{|\alpha|}} \\
   &\leq & \sum_{0\leq m \leq D} \sum_{\alpha \in \mathcal{A}_m} \frac{ \prod_{i\in [n]}  \(\frac{a}{1+a}\)^{|\alpha^{(i)}|} \(\E_{W_1}(H_{|\alpha^{(i)}|}(W_1) )\)^2}{ p^{m}} \\
   &= & \sum_{0\leq m \leq D} \sum_{\alpha \in \mathcal{A}_m} \(\frac{a}{1+a}\)^{m} \frac{ \prod_{i\in [n]}   \(\E_{W_1}(H_{|\alpha^{(i)}|}(W_1) )\)^2}{ p^{m}} \\
   &= & \sum_{0\leq m \leq D} \sum_{\alpha \in \mathcal{A}_m} \(\frac{a}{1+a}\)^{m} \frac{ \prod_{i\in [n]} \(\E_{W_1}(H_{|\alpha^{(i)}|}(W_1) )\)^{2|\alpha^{(i)}|/|\alpha^{(i)}|}}{ p^{m}} \\
   &\leq & \sum_{0\leq m \leq D} \sum_{\alpha \in \mathcal{A}_m} \(\frac{a}{1+a}\)^{m} \frac{ \prod_{i\in [n]}  \(\sup_{k\leq m} \(\E_{W_1}(H_{k}(W_1) )\)^{2/k}\)^{|\alpha^{(i)}|}}{ p^{m}} \\
   &= & \sum_{0\leq m \leq D} \sum_{\alpha \in \mathcal{A}_m} \(\frac{a}{1+a}\)^{m} \frac{\sup_{k\leq m} \(\E_{W_1}(H_{k}(W_1) )\)^{2m/k}}{ p^{m}} \\
   &= & \sum_{0\leq m \leq D} \(\frac{a}{1+a}\)^{m} \frac{\sup_{k\leq m} \(\E_{W_1}(H_{k}(W_1) )\)^{2m/k}}{ p^{m}} |\mathcal{A}_m| \\
   &\leq & \sum_{0\leq m \leq D} \(\frac{a}{1+a}\)^{m} \frac{\sup_{k\leq m} \(\E_{W_1}(H_{k}(W_1) )\)^{2m/k}}{ p^{m}} n^{m/d} p^{m/2} (m^2/8)^m \\
   &\leq & \sum_{0\leq m \leq D} \(\frac{a}{1+a}\)^{m} \frac{C_d^{2m} m^{2m}}{ p^{m}} n^{m/d} p^{m/2} (m^2/8)^m \\
   &= & \sum_{0\leq m \leq D} \(\frac{a}{1+a} \frac{C_d m^4 n^{1/d}}{p^{1/2}}\)^{m}. 
\eeas

\subsection{Proof of Theorem \ref{thm_poly_test_moment_corrected}}

\begin{proof}
The computational complexity is immediate from the definition of $V_n$: one first computes the Gram matrix
\[
G_{ij}=X_i^\top X_j,
\]
and then averages the off-diagonal entries $G_{ij}^d$. This requires $O(n^2p)$ arithmetic operations.

We now analyze the mean and variance of $V_n$.

\paragraph{Step 1: Mean.}
By independence of $X_1$ and $X_2$,
\begin{align}
\E\big[(X_1^\top X_2)^d\big]
&=
\E\Big[\Big(\sum_{j=1}^p (X_{1})_j (X_{2})_j\Big)^d\Big] =
\sum_{i_1,\ldots,i_d\in[p]}
\E\big[(X_1)_{i_1}\cdots (X_1)_{i_d}\big]^2 =
\|\bcM_d\|_F^2. \label{eq_mean_Un_new}
\end{align}
Therefore,
\[
\E V_n=\|\bcM_d\|_F^2.
\]
In particular, under $H_0$,
\[
\E V_n=0,
\]
while under $H_1$,
\[
\E V_n=\|\bcM_d\|_F^2\ge \|\bcM_d\|^2\ge \kappa^2.
\]

\paragraph{Step 2: Hoeffding-type decomposition.}
Set
\[
\theta:=\E[(X_1^\top X_2)^d]=\|\bcM_d\|_F^2.
\]
For $x\in\R^p$, define
\[
g(x):=\E[(x^\top X_2)^d]-\theta
=\langle \bcM_d,x^{\otimes d}\rangle-\|\bcM_d\|_F^2.
\]
Also define, for $x,y\in\R^p$,
\[
R(x,y):=(x^\top y)^d-\theta-g(x)-g(y).
\]
Then
\[
V_n-\theta
=
\frac{2}{n}\sum_{i=1}^n g(X_i)
+
\frac{2}{n(n-1)}\sum_{1\le i<j\le n} R(X_i,X_j).
\]
Moreover,
\[
\E[g(X_1)]=0,
\]
and
\begin{align*}
\E[R(X_1,X_2)\mid X_1]
&=
\E[(X_1^\top X_2)^d\mid X_1]-\theta-g(X_1)-\E[g(X_2)]\\
&=
(\theta+g(X_1))-\theta-g(X_1)-0\\
&=0.
\end{align*}
Thus, 
\[
\E[g(X_1)]=0,
\qquad
\E[R(X_1,X_2)\mid X_1]=\E[R(X_1,X_2)\mid X_2]=0.
\]
Hence the two terms are orthogonal, and
\begin{equation}\label{eq_var_decomp_new}
\Var(V_n)
=
\frac{4}{n}\Var(g(X_1))
+
\frac{2}{n(n-1)}\Var(R(X_1,X_2)).
\end{equation}

We now bound the two variance terms.

\paragraph{Step 3: Bound on the second-order remainder.}
We first show that
\[
\E[(X_1^\top X_2)^{2d}]
\le C_d K^{4d}p^d.
\]
Conditioning on $X_1$, the scalar $X_1^\top X_2$ is sub-Gaussian with
\[
\|X_1^\top X_2\|_{\Phi_2\,|\,X_1}
\le K\|X_1\|,
\]
hence
\[
\E[(X_1^\top X_2)^{2d}\mid X_1]
\le C_d K^{2d}\|X_1\|^{2d}.
\]
Taking expectation gives
\[
\E[(X_1^\top X_2)^{2d}]
\le C_d K^{2d}\E\|X\|^{2d}.
\]
To bound $\E\|X\|^{2d}$, let $g\sim N(0,I_p)$ be independent of $X$. Since
\[
\E_g[(g^\top x)^{2d}]=c_d\|x\|^{2d},
\qquad \forall x\in\R^p,
\]
we have
\[
\E\|X\|^{2d}
=
c_d^{-1}\E[(g^\top X)^{2d}].
\]
Conditioning on $g$, the scalar $g^\top X$ is sub-Gaussian with
\[
\|g^\top X\|_{\Phi_2\,|\,g}\le K\|g\|,
\]
so
\[
\E[(g^\top X)^{2d}\mid g]
\le C_d K^{2d}\|g\|^{2d}.
\]
Therefore
\[
\E\|X\|^{2d}
\le C_d K^{2d}\E\|g\|^{2d}
\le C_d K^{2d}p^d,
\]
which implies
\begin{equation}\label{eq_kernel_2d_new}
\E[(X_1^\top X_2)^{2d}]
\le C_d K^{4d}p^d.
\end{equation}

Next, since
\[
R(X_1,X_2)
=
(X_1^\top X_2)^d-\theta-g(X_1)-g(X_2),
\]
we have
\[
R(X_1,X_2)^2
\le
4\Big(
(X_1^\top X_2)^{2d}
+\theta^2
+g(X_1)^2
+g(X_2)^2
\Big).
\]
Taking expectations and using Jensen's inequality,
\[
\E[g(X_1)^2]\le \E[(X_1^\top X_2)^{2d}],
\qquad
\theta^2\le \E[(X_1^\top X_2)^{2d}],
\]
we obtain
\[
\E[R(X_1,X_2)^2]
\le
16\,\E[(X_1^\top X_2)^{2d}]
\le
C_d K^{4d}p^d.
\]
Hence
\begin{equation}\label{eq_R_variance_new}
\Var(R(X_1,X_2))
\le \E[R(X_1,X_2)^2]
\le C_d K^{4d}p^d.
\end{equation}

\paragraph{Step 4: Type-I error under $H_0$.}
Under $H_0$, we have $\bcM_d=0$, so by definition of $g$,
\[
g(X_1)=\langle \bcM_d,X_1^{\otimes d}\rangle-\|\bcM_d\|_F^2=0.
\]
Thus the first variance term in \eqref{eq_var_decomp_new} vanishes, and by \eqref{eq_R_variance_new},
\[
\Var(V_n)
=
\frac{2}{n(n-1)}\Var(R(X_1,X_2))
\le
C_d K^{4d}\frac{p^d}{n(n-1)}.
\]
Since $\E V_n=0$, Chebyshev's inequality yields
\[
\PP_{H_0}(V_n>\tau_n)
\le
\frac{\Var(V_n)}{\tau_n^2}
\le
\frac{C_d K^{4d}p^d/n^2}{A^2p^d(\log n)^2/n^2}
=
\frac{C_d K^{4d}}{A^2(\log n)^2}
\to 0.
\]
Therefore
\[
\PP_{H_0}(\phi_n=1)\to 0.
\]

\paragraph{Step 5: Type-II error under $H_1$.}
Under $H_1$, the assumption \eqref{eq_zeta1_assumption_new} gives
\[
\Var(g(X_1))
=
\Var\!\big(\langle \bcM_d,X_1^{\otimes d}\rangle-\|\bcM_d\|_F^2\big)
=
\Var\!\big(\langle \bcM_d,X_1^{\otimes d}\rangle\big)
\le C_1 p^{d/2}.
\]
Combining this with \eqref{eq_var_decomp_new} and \eqref{eq_R_variance_new}, we obtain
\[
\Var(V_n)
\le
C\frac{p^{d/2}}{n}
+
C_d K^{4d}\frac{p^d}{n(n-1)}.
\]
Hence, whenever
\[
\frac{p^{d/2}\log n}{n}\to 0,
\]
we have in particular
\[
\Var(V_n)\to 0.
\]

On the other hand, under $H_1$,
\[
\E V_n=\|\bcM_d\|_F^2\ge \kappa^2.
\]
Since $\tau_n\to 0$, for all sufficiently large $n$ we have
\[
\tau_n\le \frac{\kappa^2}{2}.
\]
Therefore
\begin{align*}
\PP_{H_1}(\phi_n=0)
&=
\PP_{H_1}(V_n\le \tau_n) \\
&\le
\PP_{H_1}\Big(V_n-\E V_n\le -(\E V_n-\tau_n)\Big) \\
&\le
\PP_{H_1}\Big(|V_n-\E V_n|\ge \frac{\kappa^2}{2}\Big) \\
&\le
\frac{4\,\Var(V_n)}{\kappa^4}
\to 0
\end{align*}
by Chebyshev's inequality.
This proves
\[
\PP_{H_1}(\phi_n=0)\to 0.
\]
\end{proof}

\subsection{Proof of Theorem \ref{thm_poly_test_cumulant}}

We first compute the mean of $V_n^{(\mathrm{cumu})}$.
By independence of $Y_1$ and $Y_2$,
\begin{align}
\E V_n^{(\mathrm{cumu})}
&=
\E\big[\langle \Psi_d(Y_1),\Psi_d(Y_2)\rangle\big] \notag\\
&=
\Big\langle \E[\Psi_d(Y_1)],\E[\Psi_d(Y_2)]\Big\rangle \notag\\
&=
\|\bcK_d\|_F^2. \label{eq_mean_Un_cumulant}
\end{align}
Therefore, under $H_0$,
\[
\E V_n^{(\mathrm{cumu})}=0,
\]
while under $H_1$,
\[
\E V_n^{(\mathrm{cumu})}
=
\|\bcK_d\|_F^2
\ge
\|\bcK_d\|^2
\ge
\kappa^2.
\]

We next perform the Hoeffding decomposition.
Set
\[
h(y_1,y_2):=\langle \Psi_d(y_1),\Psi_d(y_2)\rangle,
\qquad
\theta:=\E[h(Y_1,Y_2)]=\|\bcK_d\|_F^2.
\]
For $y\in\R^p$, define
\[
g(y)
:=
\E[h(y,Y_2)]-\theta
=
\langle \Psi_d(y),\bcK_d\rangle-\|\bcK_d\|_F^2,
\]
and for $y_1,y_2\in\R^p$, define
\[
R(y_1,y_2)
:=
h(y_1,y_2)-\theta-g(y_1)-g(y_2).
\]
Then
\[
V_n^{(\mathrm{cumu})}-\theta
=
\frac{2}{n}\sum_{i=1}^n g(Y_i)
+
\frac{2}{n(n-1)}
\sum_{1\le i<j\le n}R(Y_i,Y_j).
\]
Also,
\[
\E[g(Y_1)]=0.
\]
Moreover,
\[
\E[h(Y_1,Y_2)\mid Y_1]
=
\langle \Psi_d(Y_1),\E[\Psi_d(Y_2)]\rangle
=
\langle \Psi_d(Y_1),\bcK_d\rangle
=
\theta+g(Y_1),
\]
hence
\[
\E[R(Y_1,Y_2)\mid Y_1]
=
(\theta+g(Y_1))-\theta-g(Y_1)-\E[g(Y_2)]
=
0.
\]
Similarly,
\[
\E[R(Y_1,Y_2)\mid Y_2]=0.
\]
Therefore, the linear part and the degenerate part are orthogonal, and the standard
variance formula for order-two U-statistics yields
\begin{equation}\label{eq_var_decomp_cumulant}
\Var\big(V_n^{(\mathrm{cumu})}\big)
=
\frac{4}{n}\Var\big(g(Y_1)\big)
+
\frac{2}{n(n-1)}\Var\big(R(Y_1,Y_2)\big).
\end{equation}

We now bound the degenerate term.
Since
\[
R(Y_1,Y_2)
=
h(Y_1,Y_2)-\theta-g(Y_1)-g(Y_2),
\]
we have
\[
R(Y_1,Y_2)^2
\le
4\Big(
h(Y_1,Y_2)^2+\theta^2+g(Y_1)^2+g(Y_2)^2
\Big).
\]
By Jensen's inequality,
\[
\E[g(Y_1)^2]
=
\E\Big[\big(\E[h(Y_1,Y_2)\mid Y_1]-\theta\big)^2\Big]
\le
\E[h(Y_1,Y_2)^2],
\]
and similarly
\[
\theta^2
=
\big(\E[h(Y_1,Y_2)]\big)^2
\le
\E[h(Y_1,Y_2)^2].
\]
Consequently,
\[
\E[R(Y_1,Y_2)^2]
\le
16\,\E[h(Y_1,Y_2)^2]
\le
16 C_0 p^d
\]
by \eqref{eq_kernel_second_moment_cumulant}. Hence
\begin{equation}\label{eq_R_variance_cumulant}
\Var\big(R(Y_1,Y_2)\big)
\le
\E[R(Y_1,Y_2)^2]
\le
16 C_0 p^d.
\end{equation}

We first analyze the type-I error.
Under $H_0$, we have $\bcK_d=0$, so by the definition of $g$,
\[
g(Y_1)
=
\langle \Psi_d(Y_1),\bcK_d\rangle-\|\bcK_d\|_F^2
=
0.
\]
Thus the first term in \eqref{eq_var_decomp_cumulant} vanishes, and
\eqref{eq_R_variance_cumulant} gives
\[
\Var\big(V_n^{(\mathrm{cumu})}\big)
=
\frac{2}{n(n-1)}\Var\big(R(Y_1,Y_2)\big)
\le
C\frac{p^d}{n(n-1)}.
\]
Since $\E V_n^{(\mathrm{cumu})}=0$, Chebyshev's inequality implies
\[
\PP_{H_0}\big(V_n^{(\mathrm{cumu})}>\tau_n\big)
\le
\frac{\Var(V_n^{(\mathrm{cumu})})}{\tau_n^2}
\le
\frac{C p^d/n^2}{A^2 p^d(\log n)^2/n^2}
=
\frac{C}{A^2(\log n)^2}
\to 0.
\]
Therefore,
\[
\PP_{H_0}\big(\phi_n^{(\mathrm{cumu})}=1\big)\to 0.
\]

We next analyze the type-II error.
Under $H_1$, assumption \eqref{eq_zeta1_assumption_cumulant} implies
\[
\Var\big(g(Y_1)\big)
=
\Var\big(\langle \Psi_d(Y_1),\bcK_d\rangle-\|\bcK_d\|_F^2\big)
=
\Var\big(\langle \Psi_d(Y_1),\bcK_d\rangle\big)
\le
C_1 p^{d/2}.
\]
Combining this with \eqref{eq_var_decomp_cumulant} and
\eqref{eq_R_variance_cumulant}, we obtain
\[
\Var\big(V_n^{(\mathrm{cumu})}\big)
\le
C\frac{p^{d/2}}{n}
+
C\frac{p^d}{n(n-1)}.
\]
Hence, whenever
\[
\frac{p^{d/2}\log n}{n}\to 0,
\]
we have in particular
\[
\Var\big(V_n^{(\mathrm{cumu})}\big)\to 0.
\]

On the other hand, under $H_1$,
\[
\E V_n^{(\mathrm{cumu})}
=
\|\bcK_d\|_F^2
\ge
\kappa^2.
\]
Since $\tau_n\to 0$, for all sufficiently large $n$,
\[
\tau_n\le \frac{\kappa^2}{2}.
\]
Therefore,
\begin{align*}
\PP_{H_1}\big(\phi_n^{(\mathrm{cumu})}=0\big)
&=
\PP_{H_1}\big(V_n^{(\mathrm{cumu})}\le \tau_n\big) \\
&\le
\PP_{H_1}\Big(
V_n^{(\mathrm{cumu})}-\E V_n^{(\mathrm{cumu})}
\le
-\big(\E V_n^{(\mathrm{cumu})}-\tau_n\big)
\Big) \\
&\le
\PP_{H_1}\Big(
\big|V_n^{(\mathrm{cumu})}-\E V_n^{(\mathrm{cumu})}\big|
\ge
\frac{\kappa^2}{2}
\Big) \\
&\le
\frac{4\,\Var(V_n^{(\mathrm{cumu})})}{\kappa^4}
\to 0
\end{align*}
by Chebyshev's inequality.
This proves
\[
\PP_{H_1}\big(\phi_n^{(\mathrm{cumu})}=0\big)\to 0.
\]

Finally, because the lower-order cumulants are known, the map $\Psi_d$ is known.
For fixed $d$, if each evaluation of $\Psi_d(y)$ requires $O(p^d)$ arithmetic
operations, then each kernel evaluation
\[
\langle \Psi_d(Y_i),\Psi_d(Y_j)\rangle
\]
can be computed in $O(p^d)$ operations, and therefore the full statistic
$V_n^{(\mathrm{cumu})}$ can be computed in $O(n^2p^d)$ operations.

\section{Lemmas}

\begin{lemma}\label{lemma_existance_1d}
    There exists a sub-Gaussian random variable $X \in \R$, such that its first $d-1$ cumulants are Gaussian cumulants while the $d$th cumulant is some non-zero constant $c_d$. 
    Moreover, we have $\E H_m(X) \leq m! C_d^m$ for all $m\in\mathbb{N}^+$, where $H_m(t)$ be the order-$m$ probabilists’ Hermite polynomial.  
\end{lemma}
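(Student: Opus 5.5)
We construct $X$ as an exponential tilt of the standard Gaussian, reusing the Hermite tilting idea from the proof of Theorem~\ref{thm_lower_bound_cumulant_general_d}, but now correcting the tilt so that cumulants of orders $1,\dots,d-1$ match the Gaussian ones exactly. Let $U\sim N(0,1)$ with density $\phi$. Bounded compactly supported perturbations would give bounded tails but are awkward for exact moment matching. We therefore use a polynomial-times-Gaussian density
\[
f(x)=\phi(x)\big(1+\varepsilon H_d(x)\,\chi(x)+\textstyle\sum_{k=1}^{d-1}\lambda_k H_k(x)\chi(x)\big),
\]
with $\chi$ a smooth compactly supported cutoff equal to $1$ on a large interval. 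The coefficients $\lambda_k=\lambda_k(\varepsilon)$ are chosen to solve the linear system $\E_f H_k=0$ for $1\le k\le d-1$. The map $\lambda\mapsto(\E_f H_k)_k$ is an invertible perturbation of the identity, since $\E[H_jH_k\chi]\approx k!\,\delta_{jk}$ when the cutoff is wide. Hence $\lambda=O(\varepsilon)$, and for $\varepsilon$ small enough $f\ge 0$ and $\int f=1$.

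The density matches Gaussian moments up to order $d-1$, because $\E H_k(X)=0$ for $k\le d-1$ and the Hermite polynomials span all polynomials of degree at most $d-1$. By the moment–cumulant formula \eqref{eq_relation_cumulant_moment}, the cumulants of orders $1,\dots,d-1$ are therefore Gaussian. The $d$-th moment equals $\E U^d+\E_f H_d$ plus lower-order terms, which are Gaussian by construction. Since the Gaussian $d$-th cumulant is zero for $d\ge3$, we get $\kappa_d=\E_f H_d(X)=\varepsilon\,\E[H_d^2\chi]+O(\varepsilon^2)\ne0$. Sub-Gaussianity follows because $f\le C\phi$, as the perturbation is bounded.

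For the Hermite bound, write $\E_f H_m(X)=\int H_m\phi\cdot q$ with $q=\varepsilon H_d\chi+\sum_k\lambda_k H_k\chi$ a bounded compactly supported function on $[-L,L]$. This gives $|\E H_m(X)|\le \|q\|_\infty\int_{-L}^{L}|H_m|\phi\le C\sqrt{m!}$ by Cauchy–Schwarz, since $\E H_m(U)^2=m!$. This is well within $m!\,C_d^m$.

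The main obstacle is reconciling a \emph{compactly supported} correction with exact orthogonality to $H_1,\dots,H_{d-1}$, which is the nondegeneracy of the linear system for $\lambda$. If a cleaner route is needed, there is an alternative. Take $W=\alpha G+\beta B$, with $G$ Gaussian and $B$ a discrete variable matching Gaussian moments up to order $d-1$ (Gauss–Hermite quadrature) but not order $d$. Then $\kappa_d(W)=\beta^d\kappa_d(B)$, which is nonzero because a $(d-1)$-moment-matching quadrature rule with few nodes misses the $d$-th moment. The Hermite bound then follows from $\E H_m(\alpha G+\beta B)=\sum_j\binom{m}{j}\cdots$ together with boundedness of $B$.
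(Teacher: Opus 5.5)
Your main construction is correct, and it takes a genuinely different route from the paper. The paper uses an exponential tilt $f\propto\exp\bigl(\sum_{i<d}\beta_i\psi_i+\theta\psi_d\bigr)\phi$ with truncated Hermite functions $\psi_m$. It solves the nonlinear conditions $\E H_i(X)=0$, $i\le d-1$, via the implicit function theorem, and proves $\kappa_d\neq0$ from the limit $g'(0)\to d!$ as the truncation level $M\to\infty$. It then derives the Hermite bound from a growth bound on $H_m$ combined with sub-Gaussian moments.

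Your additive perturbation $\phi(1+q)$ makes every constraint linear. Set $A_{jk}=\E[H_j(U)H_k(U)\chi(U)]$ and $b_j=\E[H_j(U)H_d(U)\chi(U)]$ for $0\le j,k\le d-1$, including the normalization equation (see below). The system is then $A\lambda=-\varepsilon b$. Since $A$ is the Gram matrix of $H_0,\ldots,H_{d-1}$ under the measure $\chi\phi\,dx$, it is positive definite for any $\chi\ge0$ that is positive on an interval. So the ``main obstacle'' you flag is not really one, and no wide cutoff is needed.

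Nonvanishing is then exact rather than asymptotic. The quantity $\kappa_d=\E_fH_d(X)=\varepsilon\bigl(\E[H_d^2\chi]-b^\top A^{-1}b\bigr)$ is $\varepsilon$ times a Schur complement of the positive definite Gram matrix of $H_0,\ldots,H_d$, hence nonzero for every $\varepsilon\neq0$. The dependence on $\varepsilon$ is exactly linear; there is no $O(\varepsilon^2)$ term.

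Your orthogonality-plus-Cauchy--Schwarz argument gives the sharper bound $|\E H_m(X)|\le C\sqrt{m!}$ with no moment bookkeeping. The same trick, applied to $e^{T}/C_{\beta,\theta}-1$, would sharpen the paper's bound as well. In return, the exponential tilt gets positivity and normalization for free at all parameter values, and it matches the tilted family used in the minimax lower bounds. Your version needs $\varepsilon$ small to keep $f\ge0$.

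Two corrections are needed. First, $\int f=1$ is not automatic: $\E[H_k(U)\chi(U)]=-\E[H_k(U)(1-\chi(U))]$ is generally nonzero for even $k$, and for $k=d$ when $d$ is even. You can add a term $\lambda_0\chi$ and impose $\int f=1$ as the $j=0$ equation, as above. Alternatively, divide $f$ by its integral, which is harmless because the constraints $\int H_kf=0$ are homogeneous.

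Second, your Gauss--Hermite fallback fails for odd $d$, including $d=3$. The nodes are symmetric about $0$, so all odd moments of $B$ vanish. The $d$-th Gaussian moment is then also matched, and $\kappa_d(B)=0$. Equivalently, an $n$-point rule is exact up to degree $2n-1$, and $d-1\le 2n-1$ with $d$ odd forces $d\le 2n-1$. That route would need an asymmetric discrete law.
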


\begin{proof}
    Let $\psi_m(t) = \operatorname{sign}(H_m(t)) \min\{|H_m(t)|, M\}$. Define $X_{\beta, \theta}$ by its density 
    \[
        f(x) = \frac{\exp\( \sum_{i\in[d-1]} \beta_i \psi_i(x) + \theta \psi_d(x)\) \phi(x)}{C_{\beta, \theta}}, 
    \]
    where $\beta \in \R^{d-1}$, $\theta \in\R$, and $\phi(x)$ is the density of standard normal distribution. 
    
    When $\theta = 0, \beta = 0$, $X_{\beta, \theta} \sim N(0,1)$. Denote
    \[
        F(\beta, \theta) = (\E H_1(X_{\beta, \theta}), \ldots, \E H_{d-1}(X_{\beta, \theta}))\T. 
    \]
    By implicit function theorem and the fact that the Jacobian is diagonal with $i$th entry close to $i!$ for large $M$, it can be prove that there exist $M_d$, $\delta_d$ and map $\theta \mapsto \beta(\theta)$ with $\beta(0) = 0$ such that $F(\beta(\theta), \theta) = 0$ for all $|\theta| \leq \delta_d$ and $M \geq M_d$. 

    Thus, for all $\theta$ that is sufficiently small, we can always pick $\beta$ such that $\E H_i(X_{\beta, \theta}) = 0$ for all $i\in [d-1]$, which further implies that the cumulants match the Gaussian cumulants up to order $d-1$. 

    The remaining is to pick $\theta$ such that the $d$th cumulant of $X_\theta =: X_{\beta(\theta), \theta}$ is not zero. 
    Note that we have the $d$th cumulant $k_d(X_\theta) = \E H_d(X_\theta)$ for $d \geq 3$. 
    
    Let $G(\beta, \theta) = \E H_d(X_{\beta, \theta})$ and $g(\theta) = \E H_d(X_\theta)$. 
    
    Note that $g(0) = 0$ and differentiating at $0$ with the chain rule gives
    \[
      g'(0)
      = \partial_\theta G(0,0)
        + \sum_{j=1}^{d-1} \partial_{\beta_j} G(0,0)\, [\beta'(0)]_j.
    \]
    The partial derivatives can be written as covariances:
    \begin{align*}
      \partial_\theta G(0,0) &= \mathrm{Cov}\big(H_d(Z),\psi_d(Z)\big),\\
      \partial_{\beta_j} G(0,0) &= \mathrm{Cov}\big(H_d(Z),\psi_j(Z)\big).
    \end{align*}
    Moreover, $\beta'(0)$ is determined by differentiating the identity
    $F(\beta(\theta),\theta)=0$ at $\theta=0$:
    \[
      J_\beta\, \beta'(0) + F_\theta(0,0) = 0,
    \]
    where $F_\theta(0,0)$ has entries $\mathrm{Cov}(H_i(Z),\psi_d(Z))$. Hence
    \[
      \beta'(0) = - \big(J_\beta\big)^{-1} F_\theta(0,0).
    \]

    Using again that $\psi_m\to H_m$ as $M\to\infty$, we obtain
    \begin{align*}
      J_\beta &\to \mathrm{diag}(1!,2!,\dots,(d-1)!),\\
      F_\theta(0,0) &\to 0,\\
      \partial_\theta G(0,0) &\to \E H_d(Z)^2 = d!,\\
      \partial_{\beta_j} G(0,0) &\to 0,\quad j=1,\dots,d-1.
    \end{align*}
    Consequently,
    \[
      g'(0)
      = \partial_\theta G(0,0)
        + \sum_{j=1}^{d-1} \partial_{\beta_j} G(0,0)\,[\beta'(0)]_j
      \rightarrow d!.
    \]
    In particular, for all sufficiently large $M$ depending on $d$, we have $g'(0)\neq 0$. Thus, we can pick $\theta_0$ so that $g_M(\theta_0)\neq 0$. 

    For $X:=X_{\theta_0} = X_{\beta(\theta_0),\theta_0}$, we have for $m\leq d-1$, $\kappa_m(X)$ equals the Gaussian cumulant of order $m$; and $\kappa_d(X) = g(\theta_0)\neq 0$.
    
    For this fixed $M$ and $(\beta,\theta_0)$, the tilt
    \[
      T(x) := \sum_{i=1}^{d-1}\beta_i \psi_i(x) + \theta_0 \psi_d(x)
    \]
    is bounded, say $|T(x)|\le B_d$ for all $x$. Hence there exist constants $0<c_d\leq C_d<\infty$ (depending only on $d$) such that
    \[
      c_d\phi(x) \leq f_X(x) \leq C_d\phi(x)
      \quad\text{for all }x\in\mathbb{R}.
    \]
    This implies in particular that $X$ is sub-Gaussian with a parameter depending only on $d$.

    The probabilists' Hermite polynomials satisfy the explicit formula
    \[
      H_m(x)
      = m!\sum_{k=0}^{\lfloor m/2\rfloor}
        \frac{(-1)^k}{k!(m-2k)!2^k}\,x^{m-2k},
    \]
    from which one obtains a standard growth bound of the form
    \[
      |H_m(x)| \leq m!(C_0(1+|x|))^m,\qquad\forall m\geq 1,
    \]
    for some absolute constant $C_0>0$. Using Holder’s inequality together with the sub-Gaussian tail bound for $X$, there exists a constant $C_d\ge 1$ such that
    \[
      \E |H_m(X)|
      \leq m!C_d^m,\qquad\forall m\in\mathbb{N}.
    \]
    This completes the proof.
\end{proof}

\begin{lemma}\label{lemma_kronecker_lower_bound}
    For unit vectors $u,v\in \R^p$ with $\langle u,v \rangle > 0$, we have $\| u^{\otimes d} - v^{\otimes d}\| \geq \|u-v\| / 2$. 
\end{lemma}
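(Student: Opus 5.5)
The plan is to lower bound the spectral norm by evaluating $u^{\otimes d}-v^{\otimes d}$ against one well-chosen rank-one test tensor built from $u$, rather than a symmetric $w^{\otimes d}$. Write $c:=\langle u,v\rangle\in(0,1]$. For any unit vector $e$, the definition of the spectral norm with $u_1=e$ and $u_2=\cdots=u_d=u$ gives
\[
\|u^{\otimes d}-v^{\otimes d}\|\ \ge\ \big\langle u^{\otimes d}-v^{\otimes d},\ e\otimes u^{\otimes (d-1)}\big\rangle
=\langle u,e\rangle-c^{d-1}\langle v,e\rangle
=\big\langle u-c^{d-1}v,\ e\big\rangle .
\]
If $u=v$ there is nothing to prove. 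Otherwise $u-c^{d-1}v\neq 0$, and we take $e=(u-c^{d-1}v)/\|u-c^{d-1}v\|$. This yields $\|u^{\otimes d}-v^{\otimes d}\|\ge \|u-c^{d-1}v\|$.

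Next we compute both sides explicitly. Using $\|u\|=\|v\|=1$, we have $\|u-c^{d-1}v\|^2=1-2c^{d}+c^{2d-2}$ and $\|u-v\|^2/4=(1-c)/2$. The key algebraic step is the identity
\[
1-2c^{d}+c^{2d-2}=(1-c^{d-1})^2+2c^{d-1}(1-c),
\]
which can be checked by expanding the right-hand side. Both terms are nonnegative because $c\in(0,1]$. So it suffices to show that their sum is at least $(1-c)/2$.

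We split into two cases according to the size of $c^{d-1}$.
\begin{itemize}
\item If $c^{d-1}\ge 1/4$, the second term alone gives $2c^{d-1}(1-c)\ge (1-c)/2$.
\item If $c^{d-1}<1/4$, then $1-c^{d-1}>3/4$. The first term is then at least $9/16>1/2\ge(1-c)/2$.
\end{itemize}
In either case $\|u-c^{d-1}v\|^2\ge \|u-v\|^2/4$. Taking square roots gives $\|u^{\otimes d}-v^{\otimes d}\|\ge\|u-v\|/2$, with the uniform constant $1/2$ for every $d$.

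The only delicate point is that a symmetric test tensor $w^{\otimes d}$ loses a factor depending on $d$ when $c$ is close to $1$. The asymmetric test $e\otimes u^{\otimes(d-1)}$, combined with the identity above, avoids this loss. The hypothesis $c>0$ is used to make $2c^{d-1}(1-c)$ nonnegative, which holds for every $d$ only when $c\ge 0$.
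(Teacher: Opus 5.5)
Your proof is correct and is essentially the paper's argument: both lower bound the norm by pairing $u^{\otimes d}-v^{\otimes d}$ with the asymmetric test tensor $u^{\otimes(d-1)}\otimes e$, which reduces everything to $\langle u-c^{d-1}v,e\rangle$. The paper simply takes $e=(u-v)/\|u-v\|$, for which $\langle u,e\rangle=-\langle v,e\rangle=\|u-v\|/2$ gives the value $\tfrac{\|u-v\|}{2}(1+c^{d-1})\ge \|u-v\|/2$ in one line, so your optimized $e$, the identity, and the case split are not needed. (Your closing remark also overstates things: since $u^{\otimes d}-v^{\otimes d}$ is symmetric, an optimally chosen $w^{\otimes d}$ attains its spectral norm exactly by Lemma~\ref{lemma_symmetric_tensor_approximation}, so only naive symmetric choices of $w$ are lossy.)
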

\begin{proof}
    Let
    $$
    \alpha:=\langle u, v\rangle , \quad 
    \delta:=\|u-v\|=\sqrt{2-2 \alpha}, \quad 
    w:=(u-v) / \delta.
    $$
    So we have \[
        u\T w = - v\T w = \delta / 2. 
    \]
    Choose the $d$ unit vectors
    $$
    x_1=\cdots=x_{d-1}:=u, \quad x_d:=w.
    $$
    Then we have 
    \[
        \| u^{\otimes d} - v^{\otimes d}\| 
        \geq \langle u^{\otimes d} - v^{\otimes d}, \bigotimes_{k = 1}^d x_k\rangle
         = \delta / 2 (1+ \alpha^{d-1}) \geq \delta / 2. 
    \]
    
\end{proof}

\begin{lemma}[Theorem 2 in \cite{friedlandNumberSingularVector2013}]\label{lemma_symmetric_tensor_approximation}
    For any symmetric tensor, there exists a best rank-one approximation that is symmetric.  
\end{lemma}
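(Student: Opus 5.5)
The plan is to reduce the lemma to a statement about spectral norms. For unit vectors $u_1,\ldots,u_d$ and a scalar $\sigma$,
\[
\|\bcT-\sigma\, u_1\otimes\cdots\otimes u_d\|_F^2=\|\bcT\|_F^2-2\sigma\langle \bcT,\otimes_k u_k\rangle+\sigma^2 .
\]
Optimizing over $\sigma$ shows that a best rank-one approximation is $\sigma^*\otimes_k u_k^*$ with $\sigma^*=\|\bcT\|$, where $(u_1^*,\ldots,u_d^*)$ maximizes $\langle\bcT,\otimes_k u_k\rangle$ over $(\mathbb S^{p-1})^d$. It therefore suffices to show that when $\bcT$ is symmetric the maximum is attained at a diagonal point $u_1=\cdots=u_d=u$, up to signs. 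Signs are harmless: flipping the signs of two slots leaves the value unchanged, and a sign on the rank-one term can be absorbed into $\sigma$. By compactness of $(\mathbb S^{p-1})^d$ and continuity, the set $\mathcal M$ of maximizers is nonempty and compact, and the common value is $\lambda=\|\bcT\|$.

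\textbf{Two-slot step.} Fix $(u_1,\ldots,u_d)\in\mathcal M$ and freeze all slots except $i$ and $j$. This gives the matrix $A=\bcT(\cdot,\cdot,\{u_k\}_{k\neq i,j})$, which is symmetric because $\bcT$ is symmetric. Maximality gives $u_i^\top A u_j=\lambda=\|A\|$. Hence $u_i$ and $u_j$ are a top singular pair, so $Au_j=\lambda u_i$ and $Au_i=\lambda u_j$, and therefore $A(u_i\pm u_j)=\pm\lambda(u_i\pm u_j)$. Take $w=(u_i+u_j)/\|u_i+u_j\|$ if $u_i\neq -u_j$. Otherwise take $w=u_i$, in which case $u_i^\top A u_i=-\lambda$ and a pair sign flip restores $+\lambda$. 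In either case $w^\top A w=\lambda$, so replacing both slots $i,j$ by $w$ stays in $\mathcal M$. This is the matrix fact $\|A\|=\max_x|x^\top A x|$ for symmetric $A$, applied slotwise.

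\textbf{Global step.} The aim is to reach a fully diagonal maximizer. I would choose, among all elements of $\mathcal M$ and all admissible pair sign flips, one that maximizes a continuous potential such as $\Phi(u)=\sum_{i<j}|\langle u_i,u_j\rangle|$; the maximum exists by compactness. I would then argue that if some pair has $u_i\neq\pm u_j$, a suitable pairwise replacement strictly increases $\Phi$, which is a contradiction. If $\Phi$ cannot be shown to increase under this replacement, the fallback is Banach's iterative argument. Cyclically apply the two-slot averaging to slots $(1,2),(1,3),\ldots$, each time replacing only slot $1$ and the other slot by the eigen-direction. Each step stays in $\mathcal M$ and the angles $\angle(u_1,u_k)$ contract. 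One then passes to a limit point in the compact set $\mathcal M$ and shows that it is diagonal.

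\textbf{Main obstacle.} The two-slot step is routine. The real difficulty is the global step: averaging one pair can destroy equalities already achieved in other pairs, so a monotone quantity is needed. My first attempt would be the potential $\Phi$ with a careful choice of which pair to average, namely the pair with the smallest $|\langle u_i,u_j\rangle|$. The check is whether the gain of $1-c^2$ in that pair's own term dominates the possible losses in the cross terms. If it does not, I would switch to the cyclic-averaging and compactness limit argument, which is essentially the content of the cited result \citep{friedlandNumberSingularVector2013}.
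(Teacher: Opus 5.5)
\textbf{Gap in the global step.} The paper gives no proof of its own; it simply cites Friedland's Theorem 2, which is Banach's identity $\sup_{\|u_k\|=1}\langle\bcT,\otimes_k u_k\rangle=\sup_{\|u\|=1}|\langle\bcT,u^{\otimes d}\rangle|$ for symmetric $\bcT$. Your reduction to this identity is correct, and so is your two-slot step. But the global step is the entire content of the theorem, and you have not supplied it. Neither of your routes works as written.

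In the fallback, the claim that the angles $\angle(u_1,u_k)$ contract is false. Take $u_1=u_3=e_1$ and $u_2=e_2$. Merging slots $1,2$ into $(e_1+e_2)/\sqrt2$ raises $\angle(u_1,u_3)$ from $0$ to $\pi/4$. Nothing in your sketch forces a limit point of the iteration to be diagonal; compactness only keeps it in $\mathcal M$.

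For the potential, the rule ``merge the pair with smallest $|\langle u_i,u_j\rangle|$'' can make $\Phi$ drop under both admissible merges. Let $d=6$ with slots $(e_1,e_2,w_+,w_+,w_-,w_-)$, where $w_\pm=(e_1\pm e_2)/\sqrt2$. Then $\Phi=2+4\sqrt2$, and the pair $(e_1,e_2)$ has $c=0$, tied for smallest. Merging slots $1,2$ into $w_+$, or into $w_-$, gives $\Phi=7$ in both cases. Other pairs happen to increase $\Phi$ here, but you have no general proof that some merge always does. Your argument uses no structure of $\mathcal M$ beyond closure under merges, so without that proof maximality of $\Phi$ yields no contradiction.

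\textbf{How to close the gap with your own tools.} Work with $|\langle\bcT,\otimes_k u_k\rangle|$, so single sign flips are free. Induct on the largest $k$ such that $\mathcal M$ contains a point with $u_1=\cdots=u_k=x$.

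Suppose $u_{k+1}=y\neq\pm x$. Freeze slots $k+2,\ldots,d$ and restrict slots $1,\ldots,k+1$ to the plane $V=\operatorname{span}(x,y)$. This gives a symmetric $(k+1)$-linear form $g$ on $V$ whose maximum over unit vectors is still $\lambda$.

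Identify unit vectors of $V$ up to sign with $\R/\pi\mathbb Z$. Over the compact maximizer set of $g$, minimize the length $L$ of the shortest closed arc covering all $k+1$ directions. $L$ is continuous in the configuration, and $L<\pi$.

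If $L>0$, take one slot at each endpoint of a minimal arc. By your two-slot step, both $u_i+u_j$ and $u_i-u_j$ are eigenvectors of the frozen symmetric matrix, with eigenvalues $\pm\lambda$. So you may merge both slots onto whichever bisector lies strictly inside the arc. After at most $k+1$ such merges one endpoint is empty, and the new maximizer is covered by a strictly shorter arc, contradicting minimality.

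Hence $L=0$, which gives $w\in V$ with $|g(w,\ldots,w)|=\lambda$, that is, a maximizer with $k+1$ equal slots. The case $k=d$ is then the symmetric best rank-one approximation. Alternatively, the planar case follows from the van der Corput--Schaake inequality $T^2+T'^2/m^2\le\|T\|_\infty^2$ for trigonometric polynomials of degree $m$, applied to $T(\theta)=g(w_\theta,\ldots,w_\theta)$; this is the classical route to Banach's theorem.
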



\begin{lemma}[Corollary 1.4 in \cite{gotzeConcentrationInequalitiesPolynomials2021}]\label{lemma_subexpo_concentration}
     Let $X_1, \ldots, X_n$ be a set of independent, centered random variables with $\left\|X_i\right\|_{\Psi_\alpha} \leq M$ for some $\alpha \in(0,1]$. Let $a \in \mathbb{R}^n$. For any $t \geq 0$ it holds

$$
\mathbb{P}\left(\left|\sum_{i=1}^n a_i X_i\right| \geq t\right) \leq 2 \exp \left(-\frac{1}{C_\alpha} \min \left(\frac{t^2}{M^2\|a\|^2}, \frac{t^\alpha}{M^\alpha \max _i\left|a_i\right|^\alpha}\right)\right). 
$$

\end{lemma}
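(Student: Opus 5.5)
The plan is to use concentration of bounded i.i.d.\ sums of independent, centered random variables whose $\Psi_\alpha$ Orlicz norms are bounded, with $\alpha\in(0,1]$. By homogeneity, divide by $M$ so that $\|X_i\|_{\Psi_\alpha}\le 1$, and let $S=\sum_i a_iX_i$. The target tail has two regimes: a sub-Gaussian part $t^2/\|a\|^2$ and a heavy-tailed (Weibull) part $t^\alpha/\max_i|a_i|^\alpha$. The standard route is to bound moments and then apply Markov's inequality. Concretely, I would show that for all $q\ge 2$,
\[
\Big\|\sum_i a_iX_i\Big\|_{L^q}\le C_\alpha\Big(\sqrt q\,\|a\| + q^{1/\alpha}\max_i|a_i|\Big).
\]
Markov's inequality at the level $\|S\|_{L^q}\le e^{-1}t$ then gives $\PP(|S|\ge t)\le e^{-q}$. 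Choosing $q$ as the smaller of $c\,t^2/\|a\|^2$ and $c\,(t/\max|a_i|)^\alpha$ yields the stated bound. The case $q<2$ is absorbed by the constant $2$ in front.

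For the moment bound I would first symmetrize, replacing $X_i$ by $\varepsilon_i|X_i|$, which costs a factor $2$ since the $X_i$ are centered. Then I would invoke Latała's moment formula for sums of independent symmetric variables; alternatively, I would use the explicit result of Hitczenko--Montgomery-Smith--Oleszkiewicz for Weibull-type variables with log-convex tails. The $\Psi_\alpha$ condition gives $\PP(|X_i|>s)\le 2e^{-s^\alpha}$. By a standard comparison (contraction) argument, $|X_i|$ is stochastically dominated, up to constants, by a symmetric Weibull variable $Y_i$ with tail $e^{-s^\alpha}$. For $\alpha\le1$ the function $s\mapsto s^\alpha$ is concave, so these $Y_i$ have log-convex tails, and the known moment formula reads
\[
\Big\|\sum a_iY_i\Big\|_q\asymp \sqrt q\|a\|_2+q^{1/\alpha}\|a\|_\infty,
\]
up to constants depending on $\alpha$; more precisely, the heavy term is $\|(a_i)\|_{q}\,q^{1/\alpha}$-type, which is bounded by the stated expression.

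The main obstacle is making the heavy-tail term exactly $q^{1/\alpha}\max_i|a_i|$ and not $q^{1/\alpha}\|a\|_q$. If I only prove a cruder bound via truncation, I expect the sharper form to appear as follows. Split $X_i=X_i\mathbf 1_{|X_i|\le L}+X_i\mathbf 1_{|X_i|>L}$. Apply Bernstein's inequality to the bounded part. For the tail part, note that the number of large terms is small, and use the fact that $\sum_i \PP(|a_iX_i|>s)$ decays like $e^{-(s/\max|a_i|)^\alpha}$ once $s$ exceeds the Gaussian scale. Indeed, the subadditivity $(x+y)^\alpha\le x^\alpha+y^\alpha$ for $\alpha\le1$ means that a sum exceeding $t$ through large terms is no likelier than a single term exceeding $t$, up to combinatorial factors. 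This gives the $\max_i$ form directly. I would therefore rely on citing Götze--Sambale--Sinulis, or on Latała's formula, for this step, and take the Markov optimization in $q$ as routine.
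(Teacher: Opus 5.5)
The paper gives no argument for this lemma; it simply imports Corollary~1.4 of G\"otze--Sambale--Sinulis. Your route is symmetrization, then comparison with symmetric Weibull variables, then the Hitczenko--Montgomery-Smith--Oleszkiewicz (or Lata{\l}a) moment formula for log-convex tails, then Markov at an optimized $q$, with $q<2$ absorbed by the prefactor $2$. This is the standard moment-method proof of such bounds, and it is correct. Compared with the bare citation, it makes the result self-contained apart from that moment formula, and it makes the $\alpha$-dependence of $C_\alpha$ traceable.

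Two steps need tightening.

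\textbf{The heavy-tail term.} The ``main obstacle'' you flag is not really one. The truncation/one-big-jump detour is unnecessary, and as written its ``combinatorial factors'' are uncontrolled when many $a_i$ are small. Instead, normalize $\|a\|_2=1$ and put $m=\|a\|_\infty$. Then $\|a\|_q^q\le m^{q-2}$, so $\|a\|_q\le m^{1-2/q}$.
\begin{itemize}
\item If $m\ge e^{-q}$, this is at most $e^2m$.
\item If $m<e^{-q}$ and $q\ge 3$, it is at most $m^{1/3}<e^{-q/3}$, and $q^{1/\alpha}e^{-q/3}\le C_\alpha\sqrt q$.
\item The range $2\le q<3$ is trivial, since $\|a\|_q\le\|a\|_2$.
\end{itemize}
Hence
\[
q^{1/\alpha}\|a\|_q\le C_\alpha\big(q^{1/\alpha}\|a\|_\infty+\sqrt q\,\|a\|_2\big),\qquad q\ge 2.
\]
This turns the log-convex-tail moment bound, which carries $q^{1/\alpha}\|a\|_q$, into exactly the moment bound you need.

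\textbf{The comparison with Weibull variables.} The tail bound $\PP(|X_i|>s)\le\min\{1,2e^{-s^\alpha}\}$ does not give pure domination of $|X_i|$ by $C|Y_i|$ near $s=0$. What you do get, via a quantile coupling, is $|X_i|\le c_\alpha+C_\alpha|Y_i|$; for example, $C_\alpha=2^{1/\alpha}$ and $c_\alpha=(2\log 2)^{1/\alpha}$ work. After symmetrization, apply the contraction principle conditionally and then Khintchine's inequality; this absorbs the additive constant into the $\sqrt q\,\|a\|_2$ term.
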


\begin{lemma}[Generalized Fano's Lemma (e.g., Lemma 3 in \cite{yu1997assouad})]\label{lemma_fano}
Let $r \geq 2$ be an integer and let $\mathcal{M}_r \subset \mathcal{P}$ contain $r$ probability measures indexed by $j=1,2, \ldots, r$ such that for all $j \neq j^{\prime}$
$$
d\left(\theta\left(P_j\right), \theta\left(P_{j^{\prime}}\right)\right) \geq \alpha_r,
$$
and
$$
K\left(P_j, P_{j^{\prime}}\right)=\int \log \left(P_j / P_{j^{\prime}}\right) d P_j \leq \beta_r.
$$
Then
$$
\max _j E_{P_j} d\left(\hat{\theta}, \theta\left(P_j\right)\right) \geq \frac{\alpha_r}{2}\left(1-\frac{n \beta_r+\log 2}{\log r}\right),
$$
for any $\hat{\theta}$ as the estimator based on $n$ i.i.d. realizations. 

\end{lemma}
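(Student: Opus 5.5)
The plan is the standard reduction from estimation to multiple hypothesis testing, followed by Fano's inequality for the resulting $r$-ary testing problem. Throughout, $P_j^{\otimes n}$ denotes the law of the $n$ i.i.d.\ observations under $P_j$. We assume $d$ is a metric (in our applications it is induced by the tensor spectral norm), so the triangle inequality is available. The hypothesis bounds the single-sample divergence, so the factor $n$ will come from tensorization.

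\emph{Step 1 (reduction to testing).} Given any estimator $\hat\theta$, define the minimum-distance test $\psi := \argmin_{j\in[r]} d(\hat\theta,\theta(P_j))$, breaking ties arbitrarily. Suppose $d(\hat\theta,\theta(P_j))<\alpha_r/2$. Then for every $j'\neq j$ the triangle inequality and the separation hypothesis give
\[
d(\hat\theta,\theta(P_{j'}))\ge \alpha_r - d(\hat\theta,\theta(P_j)) > \alpha_r/2,
\]
so $\psi=j$. Hence $\{\psi\neq j\}\subseteq\{d(\hat\theta,\theta(P_j))\ge \alpha_r/2\}$. Markov's inequality then yields
\[
E_{P_j} d(\hat\theta,\theta(P_j)) \ge \tfrac{\alpha_r}{2}\,P_j^{\otimes n}(\psi\neq j).
\]
Taking the maximum over $j$ and bounding it below by the average, it suffices to show
\[
\frac1r\sum_{j} P_j^{\otimes n}(\psi\neq j)\ \ge\ 1-\frac{n\beta_r+\log 2}{\log r}
\]
for every test $\psi$.

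\emph{Step 2 (Fano's inequality).} Let $J$ be uniform on $[r]$ and, given $J=j$, draw the sample $\mathbf X\sim P_j^{\otimes n}$. Since $J\to\mathbf X\to\psi(\mathbf X)$ is a Markov chain, the data-processing inequality applies. Combining it with the classical Fano inequality $H(J\mid \psi)\le \log 2 + \PP(\psi\neq J)\log(r-1)$ and $H(J)=\log r$ gives
\[
\PP(\psi\neq J)\ \ge\ \frac{H(J\mid\psi)-\log 2}{\log(r-1)}\ \ge\ 1-\frac{I(J;\mathbf X)+\log 2}{\log r}.
\]
In the last step we use $H(J\mid\psi)\ge H(J)-I(J;\mathbf X)$, together with $\log(r-1)\le\log r$. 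If the right-hand side of the lemma is negative there is nothing to prove, so we may assume the numerator $H(J\mid\psi)-\log 2$ is nonnegative, which justifies enlarging the denominator.

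\emph{Step 3 (bounding the mutual information).} Write $\bar P=\frac1r\sum_k P_k^{\otimes n}$ for the mixture. Then $I(J;\mathbf X)=\frac1r\sum_j D(P_j^{\otimes n}\,\|\,\bar P)$. Because KL divergence is convex in its second argument,
\[
I(J;\mathbf X)\le \frac{1}{r^2}\sum_{j,k} D\bigl(P_j^{\otimes n}\,\|\,P_k^{\otimes n}\bigr).
\]
By tensorization, $D(P_j^{\otimes n}\|P_k^{\otimes n}) = n\,K(P_j,P_k)\le n\beta_r$. The diagonal terms vanish and the off-diagonal ones are bounded by assumption, so $I(J;\mathbf X)\le n\beta_r$. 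Substituting this into Steps 2 and 1 gives the claimed bound.

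The only step needing care is Step 2, namely stating Fano's inequality in the form with $\log 2$ and $\log r$ and handling the regime where the bound is trivial. Everything else is routine. If preferred, the mixture-convexity argument in Step 3 can be replaced by the equivalent variational identity $I(J;\mathbf X)=\min_Q \frac1r\sum_j D(P_j^{\otimes n}\|Q)$, evaluated at $Q=P_1^{\otimes n}$; this gives the same $n\beta_r$ bound.
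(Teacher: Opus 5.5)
Your proof is correct and is essentially the standard argument behind Lemma~3 of \cite{yu1997assouad}, which the paper cites rather than proves. That argument is a minimum-distance reduction to an $r$-ary test, then Fano's inequality, then the bound $I(J;\mathbf X)\le \frac{1}{r^2}\sum_{j,k}D(P_j^{\otimes n}\|P_k^{\otimes n})\le n\beta_r$ via convexity and tensorization; the factor $n$ is the only change from Yu's formulation, where $P_j$ is already the law of the whole sample. One small streamlining: apply $\log(r-1)\le\log r$ inside Fano's inequality, $H(J\mid\psi)\le\log 2+\PP(\psi\neq J)\log r$, and rearrange, which avoids both the sign discussion and the division by $\log(r-1)=0$ when $r=2$.
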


\begin{lemma}[Upper bound of KL divergence of Gaussian mixture model (e.g., Lemma 1 in \cite{do_fast_2003})]\label{lemma_KL_upper_bound_1}
For two Gaussian mixtures with the same number of components: $f(x)=\sum_{a\in[d]} \pi_a \mathcal{N}\left(x ; \mu_a ; \Sigma_a\right)$ and $g(x)=\sum_{b\in[d]} \omega_b \mathcal{N}\left(x ; \mu_b ; \Sigma_b\right)$, we have
    \[
    D(f \| g) \leq \sum_{a \in [d]} \pi_a \(D\left(f_a \| g_a\right) + \log \pi_a - \log \omega_a\). 
    \]
\end{lemma}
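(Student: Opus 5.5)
The plan is to lift both mixtures to joint distributions on the pair (component label, observation), and then use the fact that marginalizing out the label cannot increase Kullback--Leibler divergence. Throughout, the $a$-th component of $f$ is paired with the $a$-th component of $g$. Write $f_a=\mathcal N(\cdot;\mu_a,\Sigma_a)$ and $g_a$ for the corresponding component of $g$, so that $f=\sum_{a\in[d]}\pi_a f_a$ and $g=\sum_{a\in[d]}\omega_a g_a$. Define the joint measures on $[d]\times\R^p$
\[
P(a,x):=\pi_a f_a(x),\qquad Q(a,x):=\omega_a g_a(x).
\]
Their $x$-marginals are exactly $f$ and $g$. We may assume $\omega_a>0$ whenever $\pi_a>0$; otherwise the right-hand side is $+\infty$ and there is nothing to prove.

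\textbf{Step 1 (joint divergence).} Since all densities are positive and the label is discrete, the joint divergence can be computed directly:
\[
D(P\|Q)=\sum_{a}\int \pi_a f_a(x)\log\frac{\pi_a f_a(x)}{\omega_a g_a(x)}\,dx=\sum_a \pi_a\Big(\log\pi_a-\log\omega_a+D(f_a\|g_a)\Big).
\]
This is the right-hand side of the claim. The second equality uses $\int f_a=1$ and the definition of $D(f_a\|g_a)$.

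\textbf{Step 2 (marginalization).} It remains to show $D(f\|g)\le D(P\|Q)$. One route is the chain rule $D(P\|Q)=D(f\|g)+\E_{x\sim f}\,D\big(P(\cdot\mid x)\,\|\,Q(\cdot\mid x)\big)$, whose second term is nonnegative. Equivalently, and more elementarily, one can apply the log-sum inequality pointwise in $x$:
\[
\Big(\sum_a \pi_a f_a(x)\Big)\log\frac{\sum_a\pi_a f_a(x)}{\sum_a\omega_a g_a(x)}\le \sum_a \pi_a f_a(x)\log\frac{\pi_a f_a(x)}{\omega_a g_a(x)}.
\]
Integrating this inequality over $x$ gives $D(f\|g)\le D(P\|Q)$, and combining with Step 1 proves the lemma.

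\textbf{Main obstacle.} The only delicate point is justifying the integration of the pointwise inequality. The left integrand need not be nonnegative, so splitting the integral term by term requires integrability. For Gaussian components each $D(f_a\|g_a)$ is finite, so the right-hand integrand is integrable. The left-hand integrand is bounded below by an integrable function: write it as $f\log(f/g)=g\,\varphi(f/g)$ with $\varphi(t)=t\log t$, and use $\varphi\ge -1/e$ together with $\int g<\infty$. Hence Tonelli-type reasoning applies, the inequality integrates, and the bound holds (trivially so if the right-hand side is infinite).
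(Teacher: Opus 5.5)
Your proposal is correct and takes essentially the same route as the paper. The paper gives no proof of its own and simply cites Lemma 1 of \cite{do_fast_2003}, whose argument is exactly your Step 2: apply the log-sum inequality pointwise in $x$, integrate, and split the right-hand side into $\sum_a \pi_a\log(\pi_a/\omega_a)$ plus $\sum_a \pi_a D(f_a\|g_a)$. Your integrability remark (using $f\log(f/g)\ge -g/e$ and the finiteness of Gaussian divergences) is somewhat more careful than the cited argument, and your joint-measure and chain-rule framing is an equivalent repackaging of the same inequality.
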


\begin{lemma}[Upper bound of KL divergence of Gaussian models]\label{lemma_KL_upper_bound_2}
For two Gaussian distributions $f(x)=\mathcal{N}\left(\mu_0 ; \Sigma_0\right)$ and $g(x)=\mathcal{N}\left(\mu_0 ; \Sigma_1\right)$, if $ \Sigma_i = I - A_i \in \R^{k\times k}$ with some symmetric matrix $A_i$ such that $\tr(A_i) = 0$ and $\|A_i\|_F \leq \delta < c$ for some absolute constant $c$, then we have
\[
    D(f \| g) \leq C \delta^2.  
\]
\end{lemma}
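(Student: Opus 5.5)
We would prove this lemma by writing the Gaussian Kullback--Leibler divergence in closed form and expanding each term around the identity, using $\tr(A_i)=0$ to remove the first-order contributions. Since both means equal $\mu_0$, the standard formula gives
\[
D(f\|g)=\tfrac12\Big[\tr(\Sigma_1^{-1}\Sigma_0)-k+\log\det\Sigma_1-\log\det\Sigma_0\Big].
\]
The mean term vanishes. The main requirement is that every bound be free of the dimension $k$. Our plan is therefore to control traces through $\|A_i\|_F^2$, never through $k\|A_i\|^2$.

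\textbf{Log-determinant terms.} Choose $c\le 1/2$. Then $\|A_i\|\le\|A_i\|_F\le\delta<1/2$, so $\Sigma_i=I-A_i$ is positive definite with eigenvalues in $[1/2,3/2]$. Expanding, $\log\det(I-A)=-\sum_{j\ge1}\tr(A^j)/j$, and the $j=1$ term vanishes because $\tr A=0$. For $j\ge2$ we use $|\tr(A^j)|\le\|A\|_F^2\|A\|^{j-2}$. Summing the geometric series gives $|\log\det\Sigma_i|\le\|A_i\|_F^2\le\delta^2$.

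\textbf{Trace term.} Use the Neumann series $\Sigma_1^{-1}=I+A_1+R$, where $R=\sum_{j\ge2}A_1^j$. The nuclear norm satisfies $\|R\|_*\le\sum_{j\ge2}\|A_1\|_F^2\|A_1\|^{j-2}\le 2\delta^2$, since $\|A^j\|_*\le\|A^2\|_*\|A\|^{j-2}$ and $\|A^2\|_*=\|A\|_F^2$. Then
\[
\tr(\Sigma_1^{-1}\Sigma_0)-k=\tr A_1-\tr A_0-\tr(A_1A_0)+\tr\big(R(I-A_0)\big).
\]
The first two traces vanish by assumption. Cauchy--Schwarz gives $|\tr(A_1A_0)|\le\delta^2$. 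Finally, $|\tr(R(I-A_0))|\le\|R\|_*\|I-A_0\|\le 3\delta^2$.

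Adding the three pieces yields $D(f\|g)\le C\delta^2$ with an absolute constant $C$. The only delicate point is the dimension-free control of the higher-order terms. Bounding them crudely by $k\|A\|^j$ would introduce a factor of $k$; the nuclear-norm estimate $\|A^j\|_*\le\|A\|_F^2\|A\|^{j-2}$ avoids this, and it is the step that requires care.
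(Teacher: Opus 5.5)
Your proposal is correct and follows essentially the same route as the paper. Both arguments use the closed-form Gaussian KL divergence and the Neumann expansion of $\Sigma_1^{-1}$, cancel the first-order terms via $\tr(A_i)=0$, and bound the higher-order terms by powers of $\delta$ without any dependence on $k$. The only differences are minor: you expand $\log\det(I-A_i)$ directly instead of bounding $\det\Sigma_1\le 1+C\delta^2$ and $\det\Sigma_0\ge 1-C\delta^2$ through the power-sum expansion of the determinant, and your nuclear-norm and geometric-series estimates make explicit the tail summation that the paper leaves as ``$+\ldots$''.
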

\begin{proof}
By definition, we have 
\beas
    D(f \| g) 
    &= & \frac{1}{2}\left[\operatorname{tr}\left(\Sigma_1^{-1} \Sigma_0\right)-k+\log \frac{\operatorname{det} \Sigma_1}{\operatorname{det} \Sigma_0}\right]. 
\eeas
First, note that 
\[
    \Sigma_1^{-1} = (I - A_1)^{-1} 
    = I + A_1 + A_1^2  + \ldots. 
\]
Hence, we have
\[
    \Sigma_0 \Sigma_1^{-1} = I + A_1 - A_0 +  (A_1^2 - A_0A_1) + \ldots.
\]
Note that $\tr(A_i) = 0$, $\tr(I) = k$, and $\tr(A_0^p A_1^q) \leq \|A_0\|_F^p \|A_1\|_{F}^q \leq (\delta)^{p+q}$. Thus, it follows
\[
    \tr(\Sigma_0 \Sigma_1^{-1}) -k \leq C \delta^2. 
\]

Moreover, we have
\beas
    &&\det(\Sigma_1) 
    = \det(I -  A_1) \\
    &=& 1 -  \tr(A_1) + \frac{1}{2}((\tr(A_1))^2 - \tr(A_1^2)) + \frac{1}{6} (\tr(A_1^3) - 3\tr(A_1) \tr(A_1^2) + 2(\tr(A_1))^3) + \ldots.      
\eeas
By $\tr(A_1) = 0$ and $\tr(A_1^q) \leq (\delta)^{q}$, we have
\[
    \det(\Sigma_1) \leq 1 + C \delta^2. 
\]
Similarly, we also have
\[
    \det(\Sigma_0) \geq 1 - C \delta^2. 
\]
Thus, it follows
\[
    \log\(\frac{\det(\Sigma_1)}{\det(\Sigma_0)}\) \leq \log\(1+ \frac{2C \delta^2}{1 - C \delta^2}\) \leq \frac{2C \delta^2}{1 - C \delta^2} \leq C \delta^2. 
\]

Combining all the above, we have
\[
    D(f \| g) \leq C \delta^2. 
\]

\end{proof}

\end{document}